\documentclass[12pt,reqno]{amsart}

\usepackage[T1]{fontenc}
\usepackage[utf8]{inputenc}
\usepackage{lmodern}
\usepackage{microtype}
\usepackage{mathtools}
\usepackage{amssymb,amsfonts,amsthm}
\usepackage{enumitem}
\usepackage{aliascnt}
\usepackage[authoryear,round]{natbib}
\usepackage[section]{placeins}
\usepackage{tikz}
\usepackage{pgfplots}
\pgfplotsset{compat=1.18}
\usepackage[colorlinks=true,linkcolor=blue!60!black,citecolor=blue!60!black,urlcolor=blue!60!black]{hyperref}
\usepackage[nameinlink,capitalise,noabbrev]{cleveref}

\newtheorem{mainresult}{Theorem}

\newtheorem{theorem}{Theorem}[section]
\newaliascnt{proposition}{theorem}
\newtheorem{proposition}[proposition]{Proposition}
\aliascntresetthe{proposition}
\newaliascnt{lemma}{theorem}
\newtheorem{lemma}[lemma]{Lemma}
\aliascntresetthe{lemma}
\newaliascnt{corollary}{theorem}
\newtheorem{corollary}[corollary]{Corollary}
\aliascntresetthe{corollary}
\theoremstyle{definition}
\newaliascnt{definition}{theorem}
\newtheorem{definition}[definition]{Definition}
\aliascntresetthe{definition}
\newaliascnt{example}{theorem}

\aliascntresetthe{example}
\theoremstyle{remark}
\newaliascnt{remark}{theorem}
\newtheorem{remark}[remark]{Remark}
\aliascntresetthe{remark}
\crefname{mainresult}{Theorem}{Theorems}
\Crefname{mainresult}{Theorem}{Theorems}
\crefname{theorem}{Theorem}{Theorems}
\crefname{lemma}{Lemma}{Lemmas}
\crefname{proposition}{Proposition}{Propositions}
\crefname{corollary}{Corollary}{Corollaries}
\crefname{definition}{Definition}{Definitions}
\crefname{remark}{Remark}{Remarks}
\crefname{appendix}{Appendix}{Appendices}
\Crefname{appendix}{Appendix}{Appendices}

\newcommand{\DeltaJ}{\Delta^n}
\newcommand{\R}{\mathbb{R}}
\newcommand{\B}{z^*}
\newcommand{\Ximp}{X^I}
\newcommand{\argmax}{\operatorname*{arg\,max}}
\newcommand{\dist}{\operatorname{dist}}
\newcommand{\id}{\mathrm{id}}

\newcommand{\pure}[1]{\alpha(#1)}
\newcommand{\tailored}[1]{\alpha^I(#1)}
\newcommand{\pures}{\mathcal P}

\title[Mixed Worldviews and Chosen Preferences]{Persistent Mixed Worldviews in the theory of Chosen Preferences}
\author{Ziran Liu}

\address[Z. Liu]
{Shanghai Institute for Mathematics and Interdisciplinary Sciences (SIMIS), Shanghai, China, 200433}
\address[Z. Liu]
{Research Institute of Intelligent Complex Systems, Fudan University, Shanghai 200433, China}
\email{zliu@simis.cn}
\hypersetup{
  pdftitle={Persistent Mixed Worldviews in the theory of Chosen Preferences},
  pdfauthor={Ziran Liu},
  pdfsubject={Chosen preferences, stationary Markov-perfect equilibrium, and convergence},
  pdfkeywords={chosen preferences, worldview, mindset flexibility, tailored worldviews, Markov-perfect equilibrium}
}

\begin{document}
\begin{abstract}
\emergencystretch=1.5em
\citet*[\emph{American \mbox{Economic} Review}]{BBMZ2021} develop a theory of chosen preferences and conjecture convergence to pure worldviews at low mindset flexibility. We show that the (original) conjecture fails: with arbitrarily many worldviews, a common-value action sustains mixed worldviews in every stationary equilibrium from a relatively open set of initial worldviews, even when all pure worldviews are absorbing. 

However, finite linear programs characterize a sufficient persistence condition. Conversely, mixed accumulation points require shared valuation maxima. If each implementable action has a unique maximizing evaluator, every existing stationary equilibrium converges to purity for all interior flexibility and discount parameters. Experienced utility converges without uniqueness. In existing equilibria, small payoff perturbations preserve compromise over fixed horizons with high probability. Strictly tailored worldviews yield existence and a one-step equilibrium characterization at sufficiently low mindset flexibility; weak implementation can preclude stationary pure-strategy existence. The persistence and convergence results extend to stationary randomization.
\end{abstract}

\maketitle

\noindent\textit{Keywords:} chosen preferences, worldviews, mindset flexibility, time inconsistency, Markov-perfect equilibrium, convergence.\\
\textit{JEL codes:} D11, D81, D91, Z13.

\section{Introduction}\label{sec:introduction}

In \emph{A Theory of Chosen Preferences}, \citet*{BBMZ2021} model a consumer who chooses the worldview through which she evaluates her experiences. They conjecture that this process leads to pure worldviews at low mindset flexibility \citep[pp.~731--732, footnote~18]{BBMZ2021}. Choosing a worldview also changes which action the consumer will take. The current self chooses the next worldview but cannot bind later selves, and she evaluates future actions partly through her current perspective and partly through the perspective she expects to hold when they occur. A change in preferences can therefore make an experience more enjoyable while inducing an action that the current self would rather avoid. The question is whether this conflict can sustain a mixed worldview, despite the freedom to adopt a pure one.

Shared highest evaluations are necessary for a mixed worldview to remain a limit point of equilibrium choices. A common-value action can sustain such mixing indefinitely, even though adopting a pure worldview is feasible and costless. Conversely, every mixed accumulation point of an equilibrium path requires agreement at the highest evaluation of each action optimal there. If every implementable action has a unique maximizing pure evaluator, every existing stationary equilibrium converges to a pure worldview. The same comparison across worldviews thus explains the persistence examples and yields a general convergence result. Small differences in evaluation can restore eventual purity without producing a quick change in behavior.

We retain the finite action menu, linear mixing of worldview utilities, and sophisticated behavior of~\citet{BBMZ2021}. A higher weight $\lambda\in(0,1)$ on the current worldview means lower \emph{mindset flexibility}; $\delta\in(0,1)$ is the discount factor. We study stationary Markov-perfect equilibria, allowing randomization over actions and next-period worldviews. The convergence results apply to equilibria that exist. We establish existence in the persistence examples and under strictly tailored worldviews, and give a finite-menu example with no stationary pure-strategy equilibrium.

\citet[Propositions~1 and~3, pp.~728--731]{BBMZ2021} establish immediate or gradual adjustment to purity in their two-worldview, two-action analysis. The conjectured extension does not hold without further restrictions on finite action menus. With $n\geq2$ pure worldviews, consider a common action yielding zero and $n$ associated actions, each yielding $1$ under its own worldview and $-M$ under the others, where $M>1/(n-1)$. From the equal mixture, the exact threshold is
\[
 \lambda_c=\frac{n}{(n-1)(1+M)}.
\]
Above this threshold, every stationary equilibrium remains a positive distance from all pure worldviews; below it, every equilibrium adopts a pure worldview in one period. Both outcomes occur at equality. Each pure worldview is absorbing throughout. At high $\lambda$, persistence also holds from a nonempty relatively open region of initial worldviews. Persistence therefore coexists with absorption at every pure state and holds across equilibrium selections.

The common action is never optimal under a pure worldview, but it is optimal under a mixture. At low mindset flexibility, the current self's objection to the action induced by conversion outweighs the anticipated gain in experienced utility. We formulate this comparison as an upper bound on the assessment of all implementable future worldview--action pairs. Repeating the current worldview for one period supplies the reverse bound in equilibrium, forcing every future pair to attain equality. Finite linear programs over the static action regions characterize this sufficient persistence condition. The assessment bound holds throughout the specified initial region and restricts every stationary equilibrium, including equilibria that randomize over future worldviews.

A separate argument restricts every stationary equilibrium path. The option to postpone the prescribed continuation implies that normalized discounted experienced utility is nondecreasing along deterministic paths. Under randomization, its conditional-expectation counterpart is a bounded submartingale. Experienced utility converges, and fixed-horizon losses from future actions evaluated under the current worldview are summable. The optimal continuation value is also a supremum of affine functions of the current worldview, hence convex and Lipschitz even when policies are discontinuous. Together, these properties imply that every worldview in the support of an accumulation point maximizes the valuation of every action optimal there.

Unique actionwise valuation maxima consequently imply convergence to a single pure worldview for every $0<\lambda,\delta<1$, almost surely under randomization. The maximizing worldview need not implement the action: valuing an action more highly than any other worldview does not mean preferring it to all other actions. This condition is weaker than tailoring and is implied by the cross-worldview payoff distinctness used in the earlier working paper's small-$\lambda$ result \citep[Proposition~5, p.~14]{BBMZ2019}. The convergence question under that restriction is therefore answered affirmatively, conditional on stationary equilibrium existence. Nonpurification requires payoff ties; for fixed finite menus, the payoff arrays that admit it lie in a finite union of equality hyperplanes. Ties alone are not sufficient for persistence.

Exact payoff agreement and near agreement have different long-run implications. Fix parameters above the persistence threshold and perturb the entire payoff array by at most $\rho$, allowing the formerly common payoff to differ across worldviews. From the equal mixture, the probability of leaving the compromise action by a fixed date $T$ is bounded by a constant times $\rho\delta^{-(T-1)}$, uniformly over all existing stationary equilibria of the perturbed game. If the perturbation gives each action a unique maximizing evaluator, any such equilibrium eventually approaches a pure worldview. Yet the probability of retaining compromise throughout any prescribed finite horizon tends to one as $\rho$ tends to zero. The weights may change while the consumer retains the compromise action and remains a positive distance from purity. Eventual purity therefore need not entail an early observed change in behavior.

Tailored worldviews address a different question: whether a favorable evaluation can be reached together with a credible future action. Under a strict version of the tailoring assumptions in~\citet[Proposition~4, p.~732]{BBMZ2021}, each implementable action has a pure worldview that uniquely chooses it and values it more highly than every other worldview. At sufficiently low mindset flexibility, we prove existence and characterize all stationary equilibria by finite affine comparisons. Every consumer enters an absorbing pure worldview in one period. Randomization is possible only among maximizing pure destinations. Without full tailoring, unique valuation maxima and unique optimal actions at pure worldviews still give finite-time adoption at sufficiently low flexibility, conditional on existence. Weak tailoring gives asymptotic convergence whenever equilibrium exists, but tied actions can prevent a continuation optimum from being attained and destroy stationary pure-strategy existence.

Several approaches make preferences part of the economic problem rather than treating them as fixed primitives. \citet{AkerlofKranton2000} incorporate identity and prescriptions associated with social categories into utility. \citet{BeckerMulligan1997} study a consumer's effort to reduce the discount on future utility. \citet{PalaciosSantos2004} develop an incomplete-markets general-equilibrium model in which market arrangements and the formation of preferences interact. These papers provide antecedents for the choice of an evaluation rule; our analysis concerns the successive choices of that rule in the intrapersonal game introduced by \citet{BBMZ2021}.

Belief choice poses a related trade-off. \citet{BrunnermeierParker2005} balance the anticipatory benefit of optimism against the decisions it induces, while \citet{BenabouTirole2016} review research on motivated beliefs and the competing demands of accuracy and desirability. More directly concerned with changes in tastes, \citet{BoissonnetEtAl2023} give conditions under which changes in the relevance of attributes can be represented by maximization of a meta-preference. Their revealed-preference approach and the equilibrium approach here address different questions: we take the available worldview utilities as primitives and study the long-run consequences of sophisticated choice. Our convergence statements concern individual preference formation. Individual purity need not imply population polarization, and convergence of experienced utility along one equilibrium path is not a welfare ranking of policies that induce different preferences.

\Cref{sec:model} gives the model and main results. \Cref{sec:example,sec:persistence,sec:alignment} establish persistence and experienced-utility convergence. \Cref{sec:contactgeometry} derives the limiting-support restriction and generic purification. \Cref{sec:tailored,sec:weaktailoring} study tailoring and equilibrium existence. \Cref{sec:randomization} treats stationary randomization and near-common-value payoffs. \Cref{sec:implications} discusses the economic implications, and Appendix~\ref{sec:boundary} covers the boundary values of mindset flexibility.

\section{The model and main results}\label{sec:model}

\subsection{Worldviews and actions}

Let $J=\{1,\ldots,n\}$ and $X$ be nonempty finite sets of pure worldviews and actions, respectively.  Pure worldview $j$ evaluates action $x$ according to $u_j(x)\in\R$.  Following \citet[Section~I, p.~724]{BBMZ2021}, a mixed worldview is
\[
 \alpha=(\alpha^1,\ldots,\alpha^n)\in\Delta^n
 :=\left\{\alpha\in\R_+^n:\sum_{j=1}^n\alpha^j=1\right\},
\]
and its momentary utility is
\begin{equation}\label{eq:instantaneousU}
 U(\alpha,x)=\sum_{j=1}^n\alpha^j u_j(x).
\end{equation}
We use $\pure{j}$ for the pure worldview that puts weight one on $j$, and write
\[
 \pures=\{\pure{1},\ldots,\pure{n}\}.
\]
The static optimal-action correspondence and its value are
\begin{equation}\label{eq:hB}
 \B(\alpha)=\argmax_{x\in X}U(\alpha,x),
 \qquad h(\alpha)=\max_{x\in X}U(\alpha,x).
\end{equation}
$z^*(\alpha)$ is nonempty at every state, and $h$ is continuous and convex as the maximum of finitely many affine functions.  The set of implementable actions is
\begin{equation}\label{eq:implementable}
 \Ximp=\bigcup_{\alpha\in\Delta^n}\B(\alpha).
\end{equation}
An action outside $X^I$ is strictly below $h(\alpha)$ at every state and cannot be chosen in equilibrium: current actions do not affect future states or feasible sets. Deleting all such actions therefore leaves $h$, the optimal-action correspondence on the remaining menu, and the stationary equilibrium worldview policies unchanged.

Cross-worldview utility levels are part of the primitives.  A common positive affine transformation $\widehat u_j(x)=r u_j(x)+b$, where $r>0$ and $b$ do not depend on $j$ or $x$, preserves all choices.  In particular, the continuation value defined below transforms as
\[
 \widehat C_{\phi,z}(\beta;\alpha)
 =r C_{\phi,z}(\beta;\alpha)+\frac{b}{1-\delta}.
\]
Independent additive constants for different worldviews need not preserve the choice of a future worldview.  Consequently, the equality of the payoff to an action across worldviews is a substantive assumption, not a free normalization.  When a common payoff exists, a common translation can set that payoff to zero.

\subsection{Intertemporal evaluation}

Time is discrete, $t=0,1,\ldots$, and the action set is $X_t=X$ at every date.  The initial worldview $\alpha_0$ is given.  For $t\geq1$, $\alpha_t$ was selected in period $t-1$.  The date-$t$ self chooses $x_t$ and $\alpha_{t+1}$.  Fix
\[
 0<\delta<1,\qquad 0<\lambda<1.
\]
The discount factor is $\delta$.  Higher $\lambda$ means lower mindset flexibility.  For a continuation trajectory $\sigma_t=((\alpha_t,x_t),(\alpha_{t+1},x_{t+1}),\ldots)$, we use the infinite-horizon specialization of \citet[equation~(1), p.~725]{BBMZ2021}:
\begin{equation}\label{eq:BBMZutility}
 V_t(\sigma_t)=U(\alpha_t,x_t)+
 \sum_{k=1}^{\infty}\delta^k
 \left[(1-\lambda)U(\alpha_{t+k},x_{t+k})
       +\lambda U(\alpha_t,x_{t+k})\right].
\end{equation}
The two terms in brackets evaluate the same future action under, respectively, the future and current worldviews.  Current actions do not affect any subsequent choice set or transition.  Hence the current-action choice separates from the choice of the next worldview, and $x_t\in z^*(\alpha_t)$ in equilibrium.

\subsection{Stationary Markov-perfect equilibrium}

A stationary pure strategy is a pair of maps
\[
 \phi:\Delta^n\longrightarrow\Delta^n,
 \qquad z:\Delta^n\longrightarrow X.
\]
We write $\phi^0=\id$ and $\phi^k$ for its $k$-fold iterate.  For a current worldview $\alpha$ and a proposed next worldview $\beta$, define
\begin{equation}\label{eq:continuationV}
 \begin{split}
 C_{\phi,z}(\beta;\alpha)
 =\sum_{k=0}^{\infty}\delta^k\bigl[&
 (1-\lambda)U\bigl(\phi^k(\beta),z(\phi^k(\beta))\bigr)\\
 &+\lambda U\bigl(\alpha,z(\phi^k(\beta))\bigr)\bigr].
 \end{split}
\end{equation}
The continuation value is measured from the next period, before applying the common leading discount factor. The payoff from the current choices $(x,\beta)$ is
\begin{equation}\label{eq:currentpayoff}
 U(\alpha,x)+\delta C_{\phi,z}(\beta;\alpha).
\end{equation}
Let $B=\max_{j\in J,x\in X}|u_j(x)|$.  Every bracket in \eqref{eq:continuationV} has absolute value at most $B$, and its tail starting at $k=K$ is at most $B\delta^K/(1-\delta)$ in absolute value.  Thus the series converges absolutely, with a tail bound uniform in the current state, the successor state, and the strategy pair.  No continuity of the policy is required.

\begin{definition}[Stationary MPE]\label{def:MPE}
A stationary pure-strategy Markov-perfect equilibrium is a pair $(\phi,z)$ such that, for every $\alpha\in\Delta^n$,
\begin{align}
 z(\alpha)&\in z^*(\alpha),\label{eq:MPEaction}\\
 C_{\phi,z}(\phi(\alpha);\alpha)&\geq
 C_{\phi,z}(\beta;\alpha)
 \quad\text{for every }\beta\in\Delta^n.\label{eq:MPEworldview}
\end{align}
Unless a randomized equilibrium is explicitly specified, \emph{stationary MPE} refers to this pure-strategy concept.  Stationary randomization is defined in \cref{sec:randomization}.
\end{definition}

Each dated self chooses only its current action and the next worldview; it cannot bind subsequent selves.  Conditions \eqref{eq:MPEaction}--\eqref{eq:MPEworldview} impose optimality in every subgame against every choice available to that self, as in \citet[Section~II.A, pp.~727--728]{BBMZ2021}.  The general pathwise results apply to any stationary MPE that exists.  Existence is proved for the examples and for the strictly tailored class.  Resolving ties by a fixed ordering of the finite action and destination sets gives Borel measurable policies in these constructions. The deterministic pathwise results require no measurability assumption; a probability kernel is required for stationary randomization.

For an initial worldview $\alpha_0$, the induced path is
\begin{equation}\label{eq:path}
 \alpha_{t+1}=\phi(\alpha_t),\qquad x_t=z(\alpha_t).
\end{equation}
We distinguish convergence to a pure worldview,
\begin{equation}\label{eq:purelimit}
 \alpha_t\longrightarrow\pure{j}\quad\text{for some }j\in J,
\end{equation}
from the weaker condition
\begin{equation}\label{eq:asymptoticpurification}
 \dist(\alpha_t,\pures)\longrightarrow0.
\end{equation}
A worldview $\alpha$ is \emph{absorbing} under $\phi$ if $\phi(\alpha)=\alpha$.  Failure of \eqref{eq:asymptoticpurification} does not by itself rule out convergence to a mixed worldview.  Finite-time adoption of an absorbing pure worldview means
\begin{equation}\label{eq:finitepurification}
 \exists T\in\{0,1,\ldots\},\ j\in J:\quad
 \alpha_t=\pure{j}\quad\text{for every }t\geq T.
\end{equation}
Thus \eqref{eq:finitepurification} implies \eqref{eq:purelimit}, which implies \eqref{eq:asymptoticpurification}.  In the two-worldview examples we identify a state with its scalar weight on worldview~1, so the pure states are $0$ and $1$ and distances are the usual distances on $[0,1]$.  In the general simplex, $\dist_1(\alpha,A)=\inf_{a\in A}\|\alpha-a\|_1$, and an unsubscripted distance uses the same norm. None of the convergence statements depends on this choice. Distances in the scalar two-worldview parametrization are half the corresponding $\ell^1$ distances in the simplex.

\subsection{Main results}\label{sec:mainresults}

The persistence examples and sufficient condition are stated in \cref{thm:phasetransition,thm:manyworldviews,thm:generaltrap}. \Cref{thm:alignment,thm:genericpurification} give the general restrictions: experienced utility converges, and mixed accumulation points require shared valuation maxima. \Cref{thm:onestepcharacterization,thm:weaktailoringnonexistence} distinguish conditions giving existence and one-step adoption from conditions giving convergence only when an equilibrium exists. The proofs begin with deterministic strategies; \cref{sec:randomization} establishes the randomized conclusions, including those stated in \cref{thm:genericpurification}. \Cref{cor:nearcommon} then relates eventual purity in nearby models to the timing of departure from the compromise action.

\subsubsection*{A common-value action}
Consider $J=\{1,2\}$ and $X=\{c,a,b\}$, writing $\alpha\in[0,1]$ for the weight on worldview~1.  The payoffs are
\begin{equation}\label{eq:countermatrix}
\begin{array}{c|ccc}
 & c & a & b\\ \hline
 u_1 & 0 & 1 & -3\\
 u_2 & 0 & -3 & 1
\end{array}.
\end{equation}

\begin{mainresult}[A threshold for convergence]\label{thm:phasetransition}
For every $\delta\in(0,1)$ and $\lambda\in(0,1)$, the environment \eqref{eq:countermatrix} admits a stationary MPE.  From the initial worldview $\alpha_0=1/2$, the following conclusions hold:
\begin{enumerate}[label=(\roman*)]
\item If $0<\lambda<1/2$, then in every stationary MPE,
\[
 \alpha_1\in\{0,1\}
 \quad\text{and}\quad
 \alpha_t=\alpha_1\ \text{for all }t\geq1.
\]
Thus the consumer adopts a pure worldview after one period.
\item If $\lambda=1/2$, both stationary MPE with and without convergence to a pure worldview exist.  In every stationary MPE, every future worldview-action pair belongs to
\begin{equation}\label{eq:criticalequalityset}
 \left\{(\beta,c):\beta\in\left[\frac14,\frac34\right]\right\}
 \cup\{(1,a),(0,b)\}.
\end{equation}
\item If $1/2<\lambda<1$, then in every stationary MPE,
\begin{equation}\label{eq:nonpurifyall}
 x_t=c,
 \qquad
 \alpha_t\in\left[\frac14,\frac34\right]
 \qquad\text{for all }t\geq0.
\end{equation}
In particular,
\[
 \dist(\alpha_t,\{0,1\})\geq\frac14
 \qquad\text{for all }t,
\]
so even convergence to the set of pure worldviews is impossible.
\end{enumerate}
\end{mainresult}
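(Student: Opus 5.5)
The uniqueness claims (i)--(iii) all come from one comparison: an upper bound on the per-period brackets in \eqref{eq:continuationV}, played against a lower bound that optimality gives for free. Existence needs a separate explicit construction.

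\emph{Static facts and the equilibrium lower bound.} We have $U(\alpha,c)=0$, $U(\alpha,a)=4\alpha-3$ and $U(\alpha,b)=1-4\alpha$. Hence $\B$ picks $a$ on $[3/4,1]$, $b$ on $[0,1/4]$ and $c$ on $[1/4,3/4]$. Expanding the $k=0$ term of \eqref{eq:continuationV} at $\beta=\alpha$ gives
\[
C_{\phi,z}(\alpha;\alpha)=h(\alpha)+\delta C_{\phi,z}(\phi(\alpha);\alpha).
\]
Condition \eqref{eq:MPEworldview} with the deviation $\beta=\alpha$ then yields
\[
C_{\phi,z}(\phi(\alpha);\alpha)\ge h(\alpha)/(1-\delta)
\]
in every stationary MPE. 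This is the "repeat the current worldview" reverse bound.

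\emph{Bracket bounds from $\alpha_0=1/2$.} The bracket for a future pair $(\beta,x)$ is $g(\beta,x)=(1-\lambda)U(\beta,x)+\lambda U(1/2,x)$. Since $U(1/2,a)=U(1/2,b)=-1$ and $U(1/2,c)=0$:
\begin{itemize}[label=$\cdot$]
\item $g(\beta,c)=0$ for $\beta\in[1/4,3/4]$;
\item $g(\beta,a)=(1-\lambda)(4\beta-3)-\lambda\le 1-2\lambda$, with equality only at $\beta=1$;
\item symmetrically, $g(\beta,b)\le 1-2\lambda$, with equality only at $\beta=0$.
\end{itemize}

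For $\lambda>1/2$, every bracket is $\le0$, with equality exactly on $c$-pairs with $\beta\in[1/4,3/4]$. The lower bound says $C\ge h(1/2)/(1-\delta)=0$. So every bracket along the equilibrium continuation from $\alpha_0$ equals zero. The pairs $(\alpha_t,x_t)$ for $t\ge1$ are exactly these future pairs, which gives \eqref{eq:nonpurifyall}; $t=0$ is immediate. For $\lambda=1/2$, the same argument with the equality set enlarged by $(1,a)$ and $(0,b)$ gives \eqref{eq:criticalequalityset}.

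\emph{The case $\lambda<1/2$.} First I show that pure states are absorbing in every equilibrium. At $\alpha=1$ each bracket is $(1-\lambda)U(\beta,x)+\lambda u_1(x)\le1$, with equality only at $(1,a)$. The lower bound $h(1)/(1-\delta)=1/(1-\delta)$ then forces every future pair to be $(1,a)$, so $\phi(1)=1$; symmetrically $\phi(0)=0$. Consequently the deviation $\beta=1$ from $1/2$ is worth exactly $(1-2\lambda)/(1-\delta)$, which is the maximal attainable value. Any equilibrium choice must attain it, and by the strict inequalities above (now $0<1-2\lambda$ on $c$-pairs) every future pair must lie in $\{(1,a),(0,b)\}$. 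Absorption then gives $\alpha_t=\alpha_1\in\{0,1\}$.

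\emph{Existence.} This is the step I expect to be the main obstacle, because \eqref{eq:MPEworldview} must hold at every $\alpha\in[0,1]$ against every $\beta$, including deviations into states that later jump. I will use threshold policies: $\phi(0)=0$, $\phi(1)=1$, $\phi(\alpha)=1$ for $\alpha$ above a cutoff $\theta_1$, $\phi(\alpha)=0$ below $\theta_0$, and $\phi(\alpha)=\alpha$ in between, with $z$ chosen from $\B$ by a fixed tie rule. With absorbing endpoints, moving to $1$ from $\alpha$ is worth $[(1-\lambda)+\lambda(4\alpha-3)]/(1-\delta)$, while staying at a $c$-state is worth $0$.
\begin{itemize}[label=$\cdot$]
\item This suggests $\theta_1=1-1/(4\lambda)$ and $\theta_0=1/(4\lambda)$ when $\lambda\ge1/2$. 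For $\lambda>1/2$ these cutoffs lie strictly inside $(1/4,3/4)$ on either side of $1/2$.
\item When $\lambda<1/2$ the intervals overlap, so I send $\alpha>1/2$ to $1$ and $\alpha\le1/2$ to $0$.
\end{itemize}
To verify optimality, I will write the value of an arbitrary $\beta$ as a first-period bracket plus $\delta/(1-\delta)$ times a pure or common-value tail bracket. I will then check that it is affine in $\alpha$ and dominated by the prescribed choice, using the bracket bounds at general $\alpha$: staying is optimal exactly on $[\theta_0,\theta_1]\cap[1/4,3/4]$. At $\lambda=1/2$ we have $\theta_0=\theta_1=1/2$, and $1/2$ is indifferent between staying and jumping. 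Both selections are then equilibria, giving the two behaviours claimed in (ii).
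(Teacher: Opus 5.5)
Your proposal is correct and follows the paper's proof. The uniqueness parts play the postponement bound $S(\alpha)\geq h(\alpha)/(1-\delta)$ against the bracket bounds from $1/2$, and from the pure state $1$ for absorption. Existence comes from attaining the upper bound $G(\alpha)/(1-\delta)$ with the same cutoffs $1/(4\lambda)$ and $1-1/(4\lambda)$. The only variation is that you keep each state of the middle band fixed rather than sending it to the single mixed target $1/2$ as in \cref{prop:globalMPEcounter}; this works equally well because $U(\cdot,c)\equiv0$, so every compromise state gives the current self the same assessment $0$.
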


Both pure worldviews uniquely select an extreme action, whereas $\alpha=1/2$ uniquely selects $c$.  Adopting the pure worldview associated with either extreme action yields experienced utility $1$, but the balanced worldview evaluates that action at $-1$.  The combined assessment is therefore $1-2\lambda$, compared with zero for $c$.

When $\lambda>1/2$, every stationary equilibrium continuation from $\alpha_0=1/2$ preserves the compromise action even though both pure worldviews are absorbing. Their absorption does not imply convergence from an initially mixed worldview.  The proof and global equilibrium constructions are in \cref{sec:example}.  \Cref{prop:asymmetric,prop:common-value-perturbations} extend the persistence conclusion to unequal pure-worldview peak utilities and to small perturbations that preserve the common-value action.

This restriction matters for the conjecture in \citet[pp.~731--732, footnote~18]{BBMZ2021}.  The earlier working paper's result for sufficiently small $\lambda$ assumes
\begin{equation}\label{eq:genericity}
 u_i(x)\neq u_j(y)
 \qquad\text{whenever }i\neq j,\quad x,y\in X.
\end{equation}
See \citet[Proposition~5, p.~14]{BBMZ2019}. Since $u_1(c)=u_2(c)$, the example violates \eqref{eq:genericity}. A separate positive result, \cref{thm:genericpurification}, establishes convergence under a weaker unique-maximizer condition, conditional on equilibrium existence.

\subsubsection*{Arbitrarily many pure worldviews}
\begin{mainresult}[Persistence with $n$ pure worldviews]\label{thm:manyworldviews}
Let $n\geq2$, $M>1/(n-1)$, $J=\{1,\ldots,n\}$, and $X=\{c,a_1,\ldots,a_n\}$.  Suppose
\begin{equation}\label{eq:nworldpayoffs}
 u_j(c)=0,\qquad
 u_j(a_i)=\begin{cases}1,&j=i,\\-M,&j\neq i.\end{cases}
\end{equation}
Put
\[
 \bar\alpha=(1/n,\ldots,1/n),\qquad
 \lambda_c=\frac{n}{(n-1)(1+M)}\in(0,1),\qquad
 C_c=\left\{\alpha\in\Delta^n:\alpha^i\leq\frac{M}{1+M}\ \forall i\right\}.
\]
A stationary MPE exists for every $\delta\in(0,1)$ and $\lambda\in(0,1)$.  Every pure worldview is absorbing in every such equilibrium.  Starting at $\bar\alpha$:
\begin{enumerate}[label=(\roman*)]
\item If $\lambda<\lambda_c$, every stationary MPE selects a pure worldview in one period and remains there.
\item If $\lambda>\lambda_c$, every stationary MPE satisfies
\begin{equation}\label{eq:nworldpersistence}
 x_t=c,\qquad \alpha_t\in C_c,\qquad
 \dist_1(\alpha_t,\pures)\geq\frac{2}{1+M}>0
 \quad\text{for all }t\geq0.
\end{equation}
The same conclusion holds from every initial state in the nonempty relatively open set
\begin{equation}\label{eq:nworldopenset}
 \mathcal T_\lambda=\left\{\alpha\in\Delta^n:
 \max_i\alpha^i<1-\frac1{\lambda(1+M)}\right\}.
\end{equation}
\item At $\lambda=\lambda_c$, stationary MPE with pure and mixed absorbing continuations from $\bar\alpha$ both exist.
\end{enumerate}
In particular, $n=3$ and $M=2$ give $\lambda_c=1/2$.
\end{mainresult}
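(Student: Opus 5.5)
The argument compares assessments of future worldview--action pairs. Write the continuation value as
\[
C_{\phi,z}(\beta;\alpha)=\sum_k\delta^k A_\alpha(\beta_k,x_k),\qquad
A_\alpha(\beta,x):=(1-\lambda)U(\beta,x)+\lambda U(\alpha,x),
\]
where $\beta_k=\phi^k(\beta)$ and $x_k=z(\beta_k)$.

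\textbf{Static geometry.} The action $c$ is optimal at $\beta$ if and only if $\max_i\beta^i\le M/(1+M)$. Indeed, $U(\beta,a_i)=\beta^i-M(1-\beta^i)=(1+M)\beta^i-M$. Hence $C_c=z^{*-1}(c)$, and $a_i$ is optimal exactly where $\beta^i\ge M/(1+M)$ and $\beta^i$ is maximal.

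The largest assessment of a future pair $(\beta,a_i)$ is attained at $\beta=\pure{i}$, since $U(\beta,a_i)\le1$. It equals
\[
A_\alpha(\pure{i},a_i)=(1-\lambda)+\lambda\bigl((1+M)\alpha^i-M\bigr),
\]
while every pair $(\beta,c)$ has assessment $0$. The first expression is negative exactly when $(1+M)\alpha^i-M<-(1-\lambda)/\lambda$, that is, when $\alpha^i<1-1/(\lambda(1+M))$. This gives $\mathcal T_\lambda$. At $\bar\alpha$ the condition reads $1/n<1-1/(\lambda(1+M))$, which is equivalent to $\lambda>\lambda_c$. So on $\mathcal T_\lambda$, every implementable future pair has assessment at most $0$, with equality only for pairs with action $c$. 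Here I use that $(\beta,a_i)$ with $z(\beta)=a_i$ requires $U(\beta,a_i)\le1$, and that ties at boundary points of $C_c$ involving $c$ give $0$.

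\textbf{Persistence (ii).} From $\alpha\in\mathcal T_\lambda$, the current self can choose $\beta$ to be a self-absorbing worldview. Stationarity alone does not guarantee $\phi(\alpha)=\alpha$, so I use the one-period repetition argument previewed in the introduction. Choosing $\beta=\alpha$ gives value $C(\alpha;\alpha)=z$-dependent. The cleaner route is as follows. Every continuation has $C\le0$ by the bound above. I claim the optimum is $\ge0$, and I prove it by strong induction along the path, showing $\sup_\beta C(\beta;\alpha)\ge0$ whenever $\alpha\in\mathcal T_\lambda$ or $\alpha\in C_c$ is reached.

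The key device is repetition. Choosing $\beta=\alpha$ means the next self faces the same problem, so $C(\alpha;\alpha)=A_\alpha(\alpha,z(\alpha))+\delta C(\phi(\alpha);\alpha)$. Since $\phi(\alpha)$ is optimal for $\alpha$, we get $C(\alpha;\alpha)=A_\alpha(\alpha,z(\alpha))+\delta V^*(\alpha)$, where $V^*(\alpha)=\max_\beta C(\beta;\alpha)$. Optimality then gives $V^*\ge A_\alpha(\alpha,z(\alpha))+\delta V^*$. Because $\alpha\in\mathcal T_\lambda\subset C_c$ (to be checked: $1-1/(\lambda(1+M))<M/(1+M)$), we have $z(\alpha)=c$, so $A=0$ and $(1-\delta)V^*\ge0$.

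Combining $V^*\ge0$ with the termwise bound $\le0$ forces every term of the equilibrium continuation to vanish. Hence every future pair has action $c$, so $\alpha_t\in C_c$. Note that $\alpha_t\in C_c$ need not stay in $\mathcal T_\lambda$, but the assessments are taken relative to the initial $\alpha_0$, and that is all the argument needs. The bound $\dist_1(\alpha_t,\pures)\ge 2(1-M/(1+M))=2/(1+M)$ follows directly from $C_c$.

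\textbf{Part (i) and the pure states.} At a pure state $\pure{j}$, the assessment of $(\beta,x)$ is at most $(1-\lambda)\cdot1+\lambda u_j(x)\le1$, with equality only at $(\pure{j},a_j)$. Staying gives $1/(1-\delta)$, the maximum possible, and any other continuation contains a strictly smaller term. So $\phi(\pure{j})=\pure{j}$ in every equilibrium, and pure states are absorbing. For $\lambda<\lambda_c$ at $\bar\alpha$, jumping to $\pure{i}$ yields the constant stream $(1-\lambda)+\lambda((1+M)/n-M)>0$. This value is the strict maximum of per-period assessments, attained only at the pairs $(\pure{i},a_i)$. Therefore every optimal $\beta$ must generate only such pairs, which gives $\beta=\pure{i}$ immediately.

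\textbf{Existence and (iii).} I would build equilibria explicitly. Take $z$ to be a fixed selection of $z^*$, with ties broken toward $c$. Define $\phi(\alpha)=\alpha$ on the states where all $A_\alpha(\pure{i},a_i)\le0$. Elsewhere, set $\phi(\alpha)=\pure{i^*}$, choosing $i^*$ to maximize $\alpha^i$. Verification reduces to two facts. First, under this $\phi$ every reachable continuation is either constant-$c$ (value $0$) or constant at $(\pure{i},a_i)$. Second, the current self picks the best of these two streams, which are the termwise-maximal candidates.

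The main obstacle is that states where staying is prescribed must really lie in $C_c$, so that staying yields $c$. Otherwise one has to route them to an absorbing $c$-state. Such states exist, for instance $\bar\alpha$, and any state that is absorbing and lies in $C_c$ works. At $\lambda=\lambda_c$, both prescriptions at $\bar\alpha$ (stay, or jump to $\pure{1}$) give value $0$. Choosing either one yields the two equilibria required in (iii).
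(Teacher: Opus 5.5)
Your proposal is correct and follows essentially the paper's argument: the cell $C_c$, termwise bounds on assessments taken from the initial worldview, the one-period repetition inequality for the reverse bound, and an existence construction in which the prescribed successor attains the termwise maximum $\max\{0,\max_i[(1-\lambda)+\lambda U(\alpha,a_i)]\}$ from the first future period. Letting compromise states stay put, rather than sending them all to $\bar\alpha$, is an equally valid variant that makes their absorption automatic.

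Two phrasings need tightening. At $\pure{j}$, ``staying gives $1/(1-\delta)$'' presupposes $\phi(\pure{j})=\pure{j}$, so cite your repetition inequality $V^*\geq 1+\delta V^*$ instead. In the existence check, a deviation to an arbitrary $\beta$ starts with the pair $(\beta,z(\beta))$, which is of neither constant type but is covered by the same termwise bound.
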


The proof and equilibrium constructions are in \cref{sec:manyworldviews}.  The equal-mixture initial state has full support for every $n$.  Persistence in fact holds on a relatively open set of initial worldviews, and the distance bound is uniform over all stationary equilibria.

\subsubsection*{A general condition for persistence}
The example uses a bound on the current self's evaluation of every future outcome that can be implemented.  For an arbitrary finite menu, this bound gives the following result.

\begin{definition}[Assessment of a future outcome]\label{def:composite}
Fix a current worldview $\bar\alpha\in\DeltaJ$.  For $\beta\in\DeltaJ$ and $x\in X$, define
\begin{equation}\label{eq:qgeneral}
 q_{\bar\alpha}(\beta,x)
 =(1-\lambda)U(\beta,x)+\lambda U(\bar\alpha,x).
\end{equation}
A pair $(\beta,x)$ is statically implementable when $x\in\B(\beta)$. The associated equality set is
\begin{equation}\label{eq:equalityset}
 K(\bar\alpha)
 =\left\{
 \beta\in\DeltaJ:
 \exists x\in\B(\beta)
 \text{ with }
 q_{\bar\alpha}(\beta,x)=h(\bar\alpha)
 \right\}.
\end{equation}
\end{definition}

\begin{mainresult}[Persistence in an equality set]\label{thm:generaltrap}
Suppose $\bar\alpha\in\DeltaJ$ satisfies the upper-bound condition
\begin{equation}\label{eq:flowceiling}
 q_{\bar\alpha}(\beta,x)
 \leq h(\bar\alpha)
 \qquad
 \text{for every }\beta\in\DeltaJ
 \text{ and every }x\in\B(\beta).
\end{equation}
Let $(\phi,z)$ be any stationary MPE and let $\alpha_0=\bar\alpha$.  Then, for every $t\geq1$,
\begin{equation}\label{eq:trapconclusionpair}
 q_{\bar\alpha}(\alpha_t,x_t)=h(\bar\alpha)
\end{equation}
and
\begin{equation}\label{eq:trapconclusionstate}
 \alpha_t\in K(\bar\alpha).
\end{equation}
In particular, if
\[
 K(\bar\alpha)\cap\pures=\varnothing,
\]
then $\varepsilon:=\dist(K(\bar\alpha),\pures)>0$ and every such equilibrium path satisfies
\begin{equation}\label{eq:mainuniformseparation}
 \dist(\alpha_t,\pures)\geq\varepsilon
 \qquad\text{for every }t\geq1.
\end{equation}
Thus no stationary MPE starting from $\bar\alpha$ approaches the set of pure worldviews.
\end{mainresult}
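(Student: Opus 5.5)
The argument compares the equilibrium choice at $\bar\alpha$ with the deviation of repeating the current worldview. Along the equilibrium path from $\alpha_0=\bar\alpha$, write $\alpha_1=\phi(\bar\alpha)$, $\alpha_{t+1}=\phi^{t}(\alpha_1)$ and $x_t=z(\alpha_t)$. By \eqref{eq:continuationV},
\[
C_{\phi,z}(\alpha_1;\bar\alpha)=\sum_{k\ge0}\delta^k q_{\bar\alpha}(\alpha_{1+k},x_{1+k}).
\]
By \eqref{eq:MPEaction}, $x_{1+k}\in\B(\alpha_{1+k})$, so every pair $(\alpha_{1+k},x_{1+k})$ is statically implementable. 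Hypothesis \eqref{eq:flowceiling} then bounds each summand by $h(\bar\alpha)$, which gives
\[
C_{\phi,z}(\alpha_1;\bar\alpha)\le \frac{h(\bar\alpha)}{1-\delta},
\]
with equality if and only if every summand equals $h(\bar\alpha)$.

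For the reverse bound, we would not use the deviation $\beta=\bar\alpha$ alone, since the continuation from $\bar\alpha$ follows $\phi$, and $\phi(\bar\alpha)=\alpha_1$ need not equal $\bar\alpha$. We use it anyway, as follows. The first term of $C_{\phi,z}(\bar\alpha;\bar\alpha)$ is $q_{\bar\alpha}(\bar\alpha,z(\bar\alpha))=U(\bar\alpha,z(\bar\alpha))=h(\bar\alpha)$. The remaining terms are exactly $\delta\, C_{\phi,z}(\alpha_1;\bar\alpha)$. Hence
\[
C_{\phi,z}(\bar\alpha;\bar\alpha)=h(\bar\alpha)+\delta\,C_{\phi,z}(\alpha_1;\bar\alpha).
\]
Write $C^*=C_{\phi,z}(\alpha_1;\bar\alpha)$. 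Equilibrium optimality \eqref{eq:MPEworldview} at state $\bar\alpha$, against the deviation $\beta=\bar\alpha$, gives $C^*\ge h(\bar\alpha)+\delta C^*$, so $C^*\ge h(\bar\alpha)/(1-\delta)$. Combined with the upper bound, this forces equality. Because the series converges absolutely and each summand is at most $h(\bar\alpha)$, equality of the sums forces $q_{\bar\alpha}(\alpha_t,x_t)=h(\bar\alpha)$ for every $t\ge1$. This is \eqref{eq:trapconclusionpair}. Since $x_t\in\B(\alpha_t)$, it also yields $\alpha_t\in K(\bar\alpha)$, which is \eqref{eq:trapconclusionstate}.

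For the separation claim, we show that $K(\bar\alpha)$ is closed. $K(\bar\alpha)$ is the finite union over $x\in X$ of the sets
\[
R_x\cap\{\beta: q_{\bar\alpha}(\beta,x)=h(\bar\alpha)\},\qquad R_x=\{\beta:x\in\B(\beta)\}=\{\beta:U(\beta,x)\ge U(\beta,y)\ \forall y\}.
\]
Each $R_x$ is a closed polytope, and $q_{\bar\alpha}(\cdot,x)$ is affine, so each set in the union is closed. Hence $K(\bar\alpha)$ is compact. The set $\pures$ is finite and disjoint from $K(\bar\alpha)$, so the distance $\varepsilon$ is positive. If $K(\bar\alpha)$ is empty, the first part already yields a contradiction, so no stationary MPE exists from that state and the statement is vacuous. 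Then \eqref{eq:mainuniformseparation} follows from $\alpha_t\in K(\bar\alpha)$.

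The main obstacle is the reverse bound: recognizing that the deviation $\beta=\bar\alpha$ produces a self-referential inequality in $C^*$, rather than requiring $\bar\alpha$ to be absorbing. The rest is bookkeeping with the absolute-convergence tail bound stated after \eqref{eq:currentpayoff}.
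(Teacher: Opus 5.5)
Your proof is correct and follows the paper's argument. The upper-bound condition caps the equilibrium continuation at $h(\bar\alpha)/(1-\delta)$, and repeating $\bar\alpha$ for one period supplies the reverse inequality $C\geq h(\bar\alpha)+\delta C$; equality then forces every summand to equal $h(\bar\alpha)$, and closedness of $K(\bar\alpha)$ as a finite union of polytope slices gives the positive distance. One small simplification: $K(\bar\alpha)$ is never empty, since any $x\in z^*(\bar\alpha)$ gives $q_{\bar\alpha}(\bar\alpha,x)=h(\bar\alpha)$, so your remark about the empty case is unnecessary.
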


The condition concerns all statically implementable pairs $(\beta,x)$, including those not visited on a given equilibrium path.  If it holds, repeating the current worldview for one period forces the equilibrium continuation to attain the bound at every date.  \Cref{sec:persistence} proves the result and establishes compactness of $K(\bar\alpha)$.  When $K(\bar\alpha)$ excludes all pure worldviews, the distance from every future state to $\pures$ has a common strictly positive lower bound.

\subsubsection*{Experienced utility and future actions}
Experienced utility can converge even when worldviews remain mixed. The next result applies along every stationary equilibrium path.

Fix a stationary MPE $(\phi,z)$ and an induced path $(\alpha_t,x_t)_{t\geq0}$.  Set
\begin{equation}\label{eq:ht}
 h_t=U(\alpha_t,x_t)=h(\alpha_t).
\end{equation}
Define normalized discounted experienced utility by
\begin{equation}\label{eq:Ht}
 H_t=(1-\delta)\sum_{k=0}^{\infty}\delta^k h_{t+k}.
\end{equation}
The normalized experienced component of the date-$t$ continuation is $H_{t+1}$.  Define the current-worldview evaluation of future actions by
\begin{equation}\label{eq:Qt}
 Q_t=(1-\delta)\sum_{k=1}^{\infty}\delta^{k-1}
 U(\alpha_t,x_{t+k}),
\end{equation}
and define the normalized equilibrium continuation payoff by
\begin{equation}\label{eq:At}
 A_t=(1-\lambda)H_{t+1}+\lambda Q_t
 =(1-\delta)C_{\phi,z}(\phi(\alpha_t);\alpha_t).
\end{equation}
Write $\underline u=\min_{j,x}u_j(x)$ and $\overline u=\max_{j,x}u_j(x)$.

\begin{mainresult}[Monotonicity and agreement over future actions]\label{thm:alignment}
Let $J$ and $X$ be finite, $\delta\in(0,1)$, and $\lambda\in(0,1)$.  Along every path induced by a stationary MPE:
\begin{enumerate}[label=(\roman*)]
\item
\begin{equation}\label{eq:Atgeht}
 A_t\geq h_t
 \qquad\text{for every }t\geq0;
\end{equation}
\item
\begin{equation}\label{eq:Hmonotone}
 H_{t+1}\geq H_t
 \qquad\text{for every }t\geq0;
\end{equation}
\item there is $L\in\R$ such that
\begin{equation}\label{eq:convergences}
 H_t\longrightarrow L,
 \qquad
 h_t\longrightarrow L,
 \qquad
 A_t\longrightarrow L,
 \qquad
 Q_t\longrightarrow L;
\end{equation}
\item for every integer $k\geq1$, the losses $h_t-U(\alpha_t,x_{t+k})$ are nonnegative and satisfy
\begin{equation}\label{eq:totalgap}
 \begin{split}
 \sum_{t=0}^{\infty}\bigl[h_t-U(\alpha_t,x_{t+k})\bigr]
 &\leq\frac{(1-\lambda)(L-H_0)}{\lambda(1-\delta)^2\delta^{k-1}}\\
 &\leq\frac{(1-\lambda)(\overline u-\underline u)}
      {\lambda(1-\delta)^2\delta^{k-1}}.
 \end{split}
\end{equation}
In particular, for every fixed $k\geq1$,
\begin{equation}\label{eq:alignmentlimit}
 h_t-U(\alpha_t,x_{t+k})\longrightarrow0.
\end{equation}
\end{enumerate}
\end{mainresult}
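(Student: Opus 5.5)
The plan is to derive everything from one deviation, namely repeating the current worldview, and then use elementary bookkeeping with the identities defining $H_t$, $Q_t$ and $A_t$.

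\emph{Step 1 (part (i)).} Fix $t$ and consider the deviation $\beta=\alpha_t$ in \eqref{eq:MPEworldview} at state $\alpha_t$. Because the strategy is stationary, the continuation from $\beta=\alpha_t$ reproduces the equilibrium path from date $t$: the pairs are $(\alpha_t,x_t),(\alpha_{t+1},x_{t+1}),\ldots$. At $k=0$ the bracket in \eqref{eq:continuationV} equals $(1-\lambda)U(\alpha_t,x_t)+\lambda U(\alpha_t,x_t)=h_t$. The remaining terms are exactly $\delta$ times the bracket sum defining $C_{\phi,z}(\phi(\alpha_t);\alpha_t)$, since the evaluating worldview is still $\alpha_t$. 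Writing $C^*=C_{\phi,z}(\phi(\alpha_t);\alpha_t)$, this gives
\[
 C_{\phi,z}(\alpha_t;\alpha_t)=h_t+\delta C^* .
\]
The equilibrium inequality $C^*\geq h_t+\delta C^*$ then yields $(1-\delta)C^*\geq h_t$, which is $A_t\geq h_t$ by \eqref{eq:At}. Absolute convergence of the series, noted after \eqref{eq:currentpayoff}, justifies the splitting.

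\emph{Step 2 (part (ii)).} Since $x_{t+k}$ is feasible, $U(\alpha_t,x_{t+k})\leq h(\alpha_t)=h_t$ for every $k\geq1$. Hence $Q_t\leq h_t$, and the nonnegativity of the losses in (iv) follows at once. Combining this with (i) gives
\[
 (1-\lambda)H_{t+1}=A_t-\lambda Q_t\geq h_t-\lambda h_t,
\]
so $H_{t+1}\geq h_t$. The recursion $H_t=(1-\delta)h_t+\delta H_{t+1}$ then gives $H_{t+1}-H_t=(1-\delta)(H_{t+1}-h_t)\geq0$.

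\emph{Step 3 (part (iii)).} The sequence $H_t$ is nondecreasing and bounded by $\overline u$, so it converges to some $L$. From the recursion, $h_t=(H_t-\delta H_{t+1})/(1-\delta)\to L$. For $Q_t$, the upper bound $Q_t\leq h_t$ gives $\limsup Q_t\leq L$. The lower bound comes from $\lambda Q_t=A_t-(1-\lambda)H_{t+1}\geq h_t-(1-\lambda)H_{t+1}$, so $\liminf\lambda Q_t\geq L-(1-\lambda)L=\lambda L$. Then $A_t=(1-\lambda)H_{t+1}+\lambda Q_t\to L$.

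\emph{Step 4 (part (iv)).} Set $g_{t,k}=h_t-U(\alpha_t,x_{t+k})\geq0$. The same inequality as in Step 2, rearranged, reads
\[
 \lambda(h_t-Q_t)\leq(1-\lambda)(H_{t+1}-h_t)=\frac{1-\lambda}{1-\delta}(H_{t+1}-H_t).
\]
The left side can be bounded below using only the $k$-th term of the series defining $Q_t$:
\[
 h_t-Q_t=(1-\delta)\sum_{m\geq1}\delta^{m-1}g_{t,m}\geq(1-\delta)\delta^{k-1}g_{t,k}.
\]
Summing over $t$, the right side telescopes to $(L-H_0)/(1-\delta)$, which gives the first bound in \eqref{eq:totalgap}. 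The second bound follows from $L-H_0\leq\overline u-\underline u$. Summability then implies \eqref{eq:alignmentlimit}.

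The only delicate point is Step 1. One must check carefully that the continuation from the deviation $\beta=\alpha_t$ decomposes as $h_t+\delta C^*$ with the evaluating worldview held fixed at $\alpha_t$; this is where stationarity and the form of \eqref{eq:continuationV} are used. Everything after that is algebra.
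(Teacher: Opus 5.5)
Your proposal is correct and follows the paper's proof essentially step for step. The postponement deviation gives $A_t\geq h_t$, static optimality gives $Q_t\leq h_t$ and hence $H_{t+1}\geq h_t$, the recursion $H_t=(1-\delta)h_t+\delta H_{t+1}$ gives monotonicity and convergence, and isolating the $k$-th term of $h_t-Q_t$ and telescoping gives \eqref{eq:totalgap}. Your order differs only cosmetically: you obtain $Q_t\to L$ by a liminf/limsup argument before $A_t\to L$, whereas the paper squeezes $A_t$ first. There is one small slip: the telescoped right side is $\frac{1-\lambda}{1-\delta}(L-H_0)$, not $(L-H_0)/(1-\delta)$, and the factor $1-\lambda$ is needed for the stated constant.
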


\Cref{sec:alignment} gives the proof and the implications for cluster points and periodic paths.  The monotone object is the discounted average $H_t$, not necessarily the momentary utility $h_t$.  The loss $h_t-U(\alpha_t,x_{t+k})$ compares the current optimal action with the action chosen $k$ periods later, using the same current worldview for both evaluations.  Thus the result concerns agreement over actions along one equilibrium path, without requiring convergence of the worldviews or ranking alternative policies by welfare.

\subsubsection*{The support of a limiting worldview}
The actionwise maximum is
\[
 M(x)=\max_{j\in J}u_j(x),\qquad x\in X.
\]
For a fixed stationary MPE let
\begin{equation}\label{eq:contactdefinition}
 S(\alpha)=C_{\phi,z}(\phi(\alpha);\alpha),\qquad
 \mathcal Z=\{\alpha\in\Delta^n:(1-\delta)S(\alpha)=h(\alpha)\}.
\end{equation}
The set $\mathcal Z$ consists of worldviews at which the optimal continuation value equals the value of perpetually receiving the current optimal payoff.  Although the equilibrium policy need not be continuous, its continuation value is continuous and convex.

\begin{mainresult}[Limiting worldviews and unique valuation maxima]\label{thm:genericpurification}
Let $J$ and $X$ be finite and $0<\lambda,\delta<1$.  Fix any stationary MPE.
\begin{enumerate}[label=(\roman*),leftmargin=*]
\item The set $\mathcal Z$ is nonempty and compact.  Every accumulation point of every equilibrium path belongs to $\mathcal Z$.
\item If $p\in\mathcal Z$ and $x\in z^*(p)$, then
\begin{equation}\label{eq:contactmaxima}
 U(p,x)=h(p)=M(x),\qquad
 p^i>0\ \Longrightarrow\ u_i(x)=M(x).
\end{equation}
\item Suppose each implementable action has a unique maximizing pure worldview:
\begin{equation}\label{eq:uniquevaluator}
 \left|\argmax_{j\in J}u_j(x)\right|=1
 \qquad\text{for every }x\in X^I.
\end{equation}
Then, from every initial state, there is $j\in J$ such that
\begin{equation}\label{eq:genericpurelimit}
 \alpha_t\longrightarrow\alpha(j).
\end{equation}
This conclusion holds for every $\lambda\in(0,1)$, not only for low mindset flexibility.  The maximizing worldview in \eqref{eq:uniquevaluator} is not required to implement the action.
\end{enumerate}
For a stationary randomized MPE, use $S$ from \eqref{eq:randomS} in the definition of $\mathcal Z$. The same conclusions hold: for each fixed initial state the pathwise assertions hold almost surely, and the limiting pure worldview may be random. The support restriction in part~(ii) holds at every point of that equilibrium's contact set.
\end{mainresult}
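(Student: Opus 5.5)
The plan is to build everything on two structural facts about $S$, together with \cref{thm:alignment}.

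\emph{Step 1: $S$ is convex and Lipschitz.} For fixed $\beta$, the map $\alpha\mapsto C_{\phi,z}(\beta;\alpha)$ is affine in $\alpha$, because $\alpha$ enters only through the terms $\lambda U(\alpha,z(\phi^k(\beta)))$. Its coefficients are bounded by $\lambda B/(1-\delta)$ uniformly in $\beta$. By \eqref{eq:MPEworldview}, $S(\alpha)=\sup_\beta C_{\phi,z}(\beta;\alpha)$. Hence $S$ is a supremum of uniformly Lipschitz affine functions, and so is convex and Lipschitz, with no continuity of $\phi$ needed.

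\emph{Step 2: the contact function.} By \cref{thm:alignment}(i), applied with $\alpha_0=\alpha$ arbitrary, we have $(1-\delta)S\ge h$ on $\Delta^n$. The function $g=(1-\delta)S-h$ is therefore continuous and nonnegative, and $\mathcal Z=g^{-1}(0)$ is closed in a compact set. Let $p$ be an accumulation point of a path, say $\alpha_{t_m}\to p$. By \cref{thm:alignment}(iii), $A_{t_m}=(1-\delta)S(\alpha_{t_m})\to L$ and $h(\alpha_{t_m})\to L$, so continuity gives $g(p)=0$. Every path has an accumulation point by compactness, so $\mathcal Z\neq\varnothing$. This proves (i).

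\emph{Step 3: part (ii), the main obstacle.} Fix $p\in\mathcal Z$ and $x\in z^*(p)$. I first unpack equality at $p$. The one-period postponement deviation $\beta=p$, which underlies \cref{thm:alignment}(i), must then be optimal with all inequalities tight. Iterating, the path from $p$ has constant normalized experienced utility, and every future action $x_k$ satisfies $U(p,x_k)=h(p)$.

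Next, suppose $U(p,x)<M(x)=u_i(x)$ and perturb the destination to $q_\varepsilon=p+\varepsilon(e_i-p)$. I will compare $(1-\delta)C(q_\varepsilon;p)$ with $h(p)$ using the affine identity
\[
C(q;p)=C(q;q)-\lambda\sum_k\delta^k U\bigl(q-p,x^q_k\bigr),
\]
where $x^q_k$ denotes the actions along the path from $q$. I then bound $(1-\delta)C(q;q)=(1-\delta)S(q)\ge h(q)\ge U(p,x)+\varepsilon\bigl(u_i(x)-U(p,x)\bigr)$. The aim is to show the first-order gain, which carries weight $(1-\lambda)$ on the experienced part, strictly beats the $O(\varepsilon)$ correction, contradicting optimality of $\phi(p)$ at $p$.

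What I would try first is to control the correction by the convexity of $S$. Use the affine minorant $\ell(\alpha)=(1-\delta)C(\phi(p);\alpha)$, which is a subgradient of $(1-\delta)S$ at $p$ lying above $U(\cdot,x)$ near $p$. This pins down the directional behaviour and should give $U(p,x)=M(x)$. Once that holds, $\sum_j p^j u_j(x)=\max_j u_j(x)$ forces $u_i(x)=M(x)$ whenever $p^i>0$. I expect this first-order comparison to be the delicate part.

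\emph{Step 4: part (iii).} Every $x\in z^*(p)$ lies in $X^I$. Under \eqref{eq:uniquevaluator}, part (ii) therefore forces $p$ to be pure, so all accumulation points are pure; it remains to exclude several of them. Suppose $\alpha(i)$ and $\alpha(j)$ with $i\neq j$ are both accumulation points.

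Since the path keeps returning to a small neighbourhood of $\alpha(i)$ and all accumulation points are pure, there are infinitely many $t$ with $\alpha_t$ near $\alpha(i)$ and $\alpha_{t+1}$ near some $\alpha(j')$ with $j'\neq i$. By upper hemicontinuity of $z^*$ and finiteness of $X$, along a subsequence $x_{t+1}=y$ is fixed and $y\in z^*(\alpha(j'))$. Part (ii) at $\alpha(j')\in\mathcal Z$ gives $h(\alpha(j'))=u_{j'}(y)=M(y)$.

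By \cref{thm:alignment}(iii)--(iv), $h_t\to L$ and $h_t-U(\alpha_t,x_{t+1})\to0$. Passing to the limit gives $u_i(y)=L=h(\alpha(j'))=u_{j'}(y)=M(y)$. Thus $y\in X^I$ has two maximizing evaluators, contradicting \eqref{eq:uniquevaluator}. Hence there is a unique accumulation point, and \eqref{eq:genericpurelimit} follows.

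The randomized version repeats these steps with the submartingale version of \cref{thm:alignment} and almost-sure subsequences.
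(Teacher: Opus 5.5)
Your Steps 1, 2 and 4 are sound; Step 4 is a harmless reorganization of the paper's vanishing-increment argument. Step 3, however, does not go through, and parts (ii) and (iii) both rest on it.

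\textbf{The unpacking and the proposed tool are false.} Contact at $p$ only yields $(1-\lambda)(H_1-h_0)=\lambda(h_0-Q_0)$, not $Q_0=h_0$. Take \cref{prop:globalMPEcounter} at $\lambda=1/2$ with $r(1/2)=a$. The state $p=1/2$ has $S(p)=h(p)=0$, yet its path moves to $(1,a)$ with $U(1/2,a)=-1<h(p)$. The same equilibrium refutes the tool you propose: there $\ell(\alpha)=(1-\delta)C_{\phi,z}(1;\alpha)=2\alpha-1$, which lies strictly below $U(\alpha,c)=0$ for $\alpha<1/2$. A smaller slip: $C(q;q)=h(q)+\delta S(q)$, not $S(q)$. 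Your inequality $(1-\delta)C(q;q)\ge h(q)$ still holds, but see below.

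\textbf{The substantive gap is the lower bound $(1-\delta)C(q;q)\ge h(q)$.} It discards $\delta[(1-\delta)S(q)-h(q)]$. After that, your correction contains $\delta(1-\delta)\varepsilon\,b_q\cdot d$, where $d=e_i-p$ and $b_q$ is the slope of the supporting continuation $f_q(\alpha)=C(\phi(q);\alpha)$ at the \emph{perturbed} point $q=q_\varepsilon$. As $\varepsilon\downarrow0$, $b_{q_\varepsilon}\cdot d$ tends to the one-sided derivative $S'(p;d)$. About that derivative you know only $(1-\delta)S'(p;d)\ge r$. So your first-order comparison subtracts an uncontrolled term $\delta[(1-\delta)S'(p;d)-r]\ge0$ from the gain $(1-\lambda)(1-\delta)r$, and nothing shows the gain wins. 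A subgradient at $p$ cannot repair this, because $b_{q_\varepsilon}\cdot d\to S'(p;d)$, which exceeds $b_p\cdot d$ when $S$ is kinked at $p$.

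\textbf{What is needed is to keep $S(q)$ exact.} The argument runs as follows.
\begin{enumerate}
\item Write $C(q;p)=(1-\lambda)h(q)+\lambda U(p,z(q))+\delta f_q(p)$.
\item For small $\varepsilon$, $z(q)\in z^*(p)$. Hence $U(p,z(q))=h(p)$ and $h(q)=h(p)+\varepsilon r$, with $r=\max_{y\in z^*(p)}[u_i(y)-h(p)]>0$.
\item Prove $R_\varepsilon=S(p)-f_{q_\varepsilon}(p)=o(\varepsilon)$. Sandwich $b_{q_\varepsilon}\cdot d$ between the difference quotients of $S$ along the ray at $\varepsilon$ and $2\varepsilon$; by convexity and Lipschitz continuity these share the same one-sided limit (\cref{lem:rayerror}).
\item Then $S(p)\ge C(q_\varepsilon;p)$ and $(1-\delta)S(p)=h(p)$ give $(1-\lambda)r\le\delta R_\varepsilon/\varepsilon\to0$, which is the contradiction you need.
\end{enumerate}
The randomized case likewise needs the averaged supporting functions $\int C_\Pi(\gamma;\cdot)\,\kappa_\beta(d\gamma)$ and the corresponding one-period identity, not only the submartingale theorem.
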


The theorem separates two kinds of payoff restrictions.  Persistent mixing is possible in the unrestricted model, but a mixed accumulation point requires ties at the \emph{highest} evaluation of each action optimal there.  Unique actionwise valuation maxima rule out persistence without requiring tailored worldviews.  Cross-worldview payoff distinctness in \eqref{eq:genericity} implies \eqref{eq:uniquevaluator}; the convergence question under that restriction is therefore answered affirmatively, conditional on existence of a stationary equilibrium.  Neither restriction alone guarantees a stationary pure-strategy equilibrium.

\Cref{sec:contactgeometry} proves the result by combining \cref{thm:alignment} with convexity of the continuation value.  It also proves finite-time adoption at sufficiently low mindset flexibility when pure worldviews have unique optimal actions, and gives an explicit sufficient threshold.  The randomized proof appears in \cref{sec:randomcontact}.

\subsubsection*{Strictly tailored worldviews}
A different action-menu structure gives one-step convergence to a pure worldview.  \citet[p.~732, conditions~(i)--(ii) and Proposition~4]{BBMZ2021} call worldviews \emph{tailored} when each implementable action has a pure worldview that implements it and values it more highly than any other pure worldview.  Their result concerns na\"ive consumers.  Here the consumer is sophisticated, and we require the implementing action to be uniquely optimal.

\begin{definition}[Strictly tailored worldviews]\label{def:strictlytailored}
For every $x\in X^I$, suppose there is an index $i(x)\in J$ such that
\begin{align}
 u_{i(x)}(x)&>u_{i(x)}(y)
       &&\text{for every }y\in X\setminus\{x\},\label{eq:anchorstatic}\\
 u_{i(x)}(x)&>u_j(x)
       &&\text{for every }j\in J\setminus\{i(x)\}.\label{eq:anchorcross}
\end{align}
Write $\tailored{x}=\pure{i(x)}$ for the corresponding tailored pure worldview.  We refer to \eqref{eq:anchorstatic}--\eqref{eq:anchorcross} as the \emph{strictly tailored condition}.
\end{definition}

Each index $i(x)$ is uniquely determined by \eqref{eq:anchorcross}.  The map $x\mapsto i(x)$ is injective.  Indeed, if $i(x)=i(y)=i$ for $x\neq y$, then \eqref{eq:anchorstatic} would give both $u_i(x)>u_i(y)$ and $u_i(y)>u_i(x)$.  In particular, the condition implies $|X^I|\leq n$.

For each pair of distinct implementable actions define
\begin{align}
 D_{xy}&=u_{i(x)}(x)-u_{i(x)}(y)>0,\label{eq:Dxy}\\
 R_{xy}&=u_{i(y)}(y)-u_{i(x)}(y)>0.\label{eq:Rxy}
\end{align}
The positivity of $R_{xy}$ follows from \eqref{eq:anchorcross} for action $y$, since $i(x)\neq i(y)$.  Define
\begin{equation}\label{eq:lambdastar}
 \lambda^\star=
 \max\left\{0,\max_{\substack{x,y\in X^I\\x\neq y}}
                  \left(1-\frac{D_{xy}}{R_{xy}}\right)\right\}.
\end{equation}
If $X^I$ contains one action, the inner maximum is omitted and $\lambda^\star=0$.  Otherwise all the ratios are positive and there are only finitely many, so
\begin{equation}\label{eq:lambdastarlt1}
 0\leq\lambda^\star<1.
\end{equation}

For $x\in X^I$ define
\begin{align}
 g_x(\alpha)&=(1-\lambda)u_{i(x)}(x)+\lambda U(\alpha,x),\label{eq:gxanchor}\\
 G(\alpha)&=\max_{x\in X^I}g_x(\alpha),\qquad
 \Gamma(\alpha)=\argmax_{x\in X^I}g_x(\alpha).\label{eq:Ganchor}
\end{align}
The value $g_x(\alpha)$ is the current self's assessment of a period spent at $\alpha^I(x)$ taking action $x$.  Because $X^I$ is finite and nonempty, $\Gamma(\alpha)$ is nonempty.

\begin{mainresult}[Characterization and one-step convergence]\label{thm:onestepcharacterization}
Assume the strictly tailored condition, $\lambda^\star<\lambda<1$, and $0<\delta<1$.  A stationary MPE exists.  A stationary strategy pair $(\phi,z)$ is an MPE if and only if, at every $\alpha\in\Delta^n$,
\begin{equation}\label{eq:fullcharacterization}
 z(\alpha)\in z^*(\alpha),\qquad
 \phi(\alpha)\in\{\tailored{x}:x\in\Gamma(\alpha)\}.
\end{equation}
Every such equilibrium satisfies
\begin{equation}\label{eq:equilibriumvalue}
 C_{\phi,z}(\phi(\alpha);\alpha)=\frac{G(\alpha)}{1-\delta}.
\end{equation}
In particular, for every initial worldview $\alpha_0$, there is $x\in\Gamma(\alpha_0)$ such that
\begin{equation}\label{eq:finalanchorpath}
 \alpha_t=\tailored{x},\qquad x_t=x
 \quad\text{for every }t\geq1.
\end{equation}
\end{mainresult}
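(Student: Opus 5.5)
The plan is to sandwich the equilibrium continuation value between a universal upper bound and a lower bound obtained by deviating to a tailored pure worldview. Equality in the sandwich then pins down $\phi(\alpha)$.

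\emph{Step 1 (universal upper bound).} Fix any stationary pair $(\phi,z)$ with $z(\gamma)\in\B(\gamma)$ for all $\gamma$. Every action that appears in \eqref{eq:continuationV} lies in $\Ximp$. For $x\in\Ximp$ and any $\gamma\in\DeltaJ$, condition \eqref{eq:anchorcross} gives
\[
U(\gamma,x)\le M(x)=u_{i(x)}(x),
\]
with equality if and only if $\gamma=\tailored{x}$. Equality requires all weight on $i(x)$, since $i(x)$ is the unique maximizer. Hence each bracket in $C_{\phi,z}(\beta;\alpha)$ is at most $g_{x}(\alpha)\le G(\alpha)$, where $x$ is the action taken in that period. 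Therefore
\[
C_{\phi,z}(\beta;\alpha)\le G(\alpha)/(1-\delta)\quad\text{for every }\beta.
\]
Equality forces, in every period $k$, that $\phi^k(\beta)=\tailored{x_k}$ and $x_k\in\Gamma(\alpha)$.

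\emph{Step 2 (anchors are strictly self-selecting when $\lambda>\lambda^\star$).} Let $p=\tailored{x}$. Then $g_x(p)=u_{i(x)}(x)=h(p)$, using \eqref{eq:anchorstatic}, which also gives $\B(p)=\{x\}$. For $y\neq x$ a direct computation gives
\[
g_x(p)-g_y(p)=D_{xy}-(1-\lambda)R_{xy},
\]
and this is $>0$ precisely because $\lambda>1-D_{xy}/R_{xy}$. So $\Gamma(p)=\{x\}$.

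\emph{Step 3 (sufficiency and existence).} Suppose $\phi$ and $z$ satisfy \eqref{eq:fullcharacterization}. Then by Step 2 we have $\phi(\tailored{x})=\tailored{x}$ and $z(\tailored{x})=x$. So from any $\alpha$ the successor $\tailored{x}$, $x\in\Gamma(\alpha)$, is absorbing with action $x$. Every bracket then equals $g_x(\alpha)=G(\alpha)$, so the continuation value equals the upper bound from Step 1 and \eqref{eq:MPEworldview} holds. Existence follows by choosing $z$ and the element of $\Gamma$ by a fixed ordering of the finite sets.

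\emph{Step 4 (necessity; the main obstacle).} The difficulty is that the value of deviating to $\tailored{x}$ depends on $\phi$ at $\tailored{x}$, which is not yet known in an arbitrary MPE. I would first settle the anchors.

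At $p=\tailored{x}$, \cref{thm:alignment}(i) gives $(1-\delta)S(p)\ge h(p)=u_{i(x)}(x)=G(p)$. Step 1 gives the reverse inequality, so equality holds. The equality case of Step 1 then forces $\phi(p)=\tailored{y}$ with $y\in\Gamma(p)=\{x\}$, so $\phi(p)=p$.

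Now take a general $\alpha$. Deviating to $\beta=\tailored{x}$ with $x\in\Gamma(\alpha)$ yields exactly $G(\alpha)/(1-\delta)$, so the equilibrium value attains the upper bound; this is \eqref{eq:equilibriumvalue}. The equality case at $k=0$ gives $\phi(\alpha)=\tailored{y}$ with $y\in\Gamma(\alpha)$, which is \eqref{eq:fullcharacterization}. Finally, \eqref{eq:finalanchorpath} follows from absorption at the anchors together with $z(\tailored{y})=y$.
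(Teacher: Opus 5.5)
Your proof is correct and follows the paper's route. Strict tailoring gives the universal bound $C_{\phi,z}(\beta;\alpha)\le G(\alpha)/(1-\delta)$, the postponement inequality makes the tailored pure worldviews absorbing, and termwise equality in the resulting sandwich pins down $\phi(\alpha)\in\{\tailored{x}:x\in\Gamma(\alpha)\}$. The only difference is organizational: you get absorption as the special case $\alpha=\tailored{x}$ of the general sandwich using $\Gamma(\tailored{x})=\{x\}$, whereas the paper proves it by contradiction in the separate \cref{lem:anchorabsorption}, with the same underlying computation.
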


The sufficient threshold $\lambda^\star$ makes each tailored pure worldview absorbing in every stationary MPE.  Because every implementable action then has a credible pure destination that maximizes its experienced utility, the current self can attain a period-by-period upper bound immediately.  \Cref{sec:tailored} proves absorption, constructs equilibrium, and derives the characterization.  \Cref{cor:policycells} shows that the equilibrium set is independent of $\delta$ and that the worldview policy is uniquely determined outside finitely many proper affine hyperplanes.

\subsubsection*{The role of unique implementation}
The unique-optimal-action requirement in \cref{def:strictlytailored} makes a chosen pure worldview a credible destination for a particular action.  Weak optimality alone does not have that implication.  To state the distinction precisely, replace~\eqref{eq:anchorstatic} by
\begin{equation}\label{eq:weaktailoring}
 x\in z^*(\alpha(i(x)))
 \quad\text{for every }x\in X^I,
\end{equation}
while retaining the strict cross-worldview comparison~\eqref{eq:anchorcross}.  This reads ``implements'' as weak static optimality.  It allows two actions to have the same tailored pure worldview.  At that state a pure action policy must still choose just one of them.

\begin{mainresult}[Weak tailoring need not ensure a stationary pure-strategy equilibrium]\label{thm:weaktailoringnonexistence}
Let $J=\{1,2,3\}$, $X=\{a,b\}$, and
\begin{equation}\label{eq:weakcountermatrix}
\begin{array}{c|cc}
 &a&b\\\hline
 u_1&1&1\\
 u_2&0&-1\\
 u_3&-2&1/2
\end{array}.
\end{equation}
Both actions satisfy~\eqref{eq:weaktailoring} and~\eqref{eq:anchorcross} with $i(a)=i(b)=1$.  Nevertheless, for every $0<\lambda<1$ and $0<\delta<1$, this environment has no stationary pure-strategy MPE.
\end{mainresult}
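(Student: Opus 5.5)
The plan is to argue by contradiction. Suppose $(\phi,z)$ is a stationary pure-strategy MPE. The static structure of \eqref{eq:weakcountermatrix} is
\[
 U(\alpha,a)-U(\alpha,b)=\alpha^2-\tfrac52\alpha^3 .
\]
So $\pure{1}$ is the only state at which both actions are optimal, and $X^I=\{a,b\}$. Moreover $M(a)=M(b)=1$, each attained only by worldview~$1$, so \eqref{eq:uniquevaluator} holds. The pure policy must pick one action at $\pure{1}$, and I split into two cases, $z(\pure{1})=a$ and $z(\pure{1})=b$. The cases are mirror images. In the first, the troublesome approach direction is along the edge toward worldview~$3$, where $b$ is strictly optimal. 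In the second, it is the edge toward worldview~$2$, where $a$ is strictly optimal. I describe the case $z(\pure{1})=a$.

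\emph{Step 1: a per-period upper bound and when it is attained.} Fix a current state $\alpha$ with $U(\alpha,b)>U(\alpha,a)$; for instance $\alpha=\pure{3}$, or any state with large $\alpha^3$. Every future pair $(\beta,x)$ with $x\in z^*(\beta)$ satisfies
\[
 q_\alpha(\beta,x)\leq \bar q(\alpha):=(1-\lambda)+\lambda U(\alpha,b),
\]
because $U(\beta,x)\leq M(x)=1$ and $U(\alpha,x)\leq U(\alpha,b)$. Equality forces $x=b$ and $\beta=\pure{1}$, since worldview~$1$ is the unique maximizer for $b$. By \eqref{eq:continuationV}, $C_{\phi,z}(\beta;\alpha)\leq \bar q(\alpha)/(1-\delta)$ for every $\beta$. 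Equality would force the whole continuation path to sit at $\pure{1}$ playing $b$, which contradicts $z(\pure{1})=a$. Hence the equilibrium value at $\alpha$ is strictly below $\bar q(\alpha)/(1-\delta)$. It is therefore enough to show that
\[
 \sup_\beta C_{\phi,z}(\beta;\alpha)=\bar q(\alpha)/(1-\delta),
\]
because then no choice of $\phi(\alpha)$ satisfies \eqref{eq:MPEworldview}.

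\emph{Step 2: approximate the bound along the edge.} Consider the states $\beta_s=(1-s)\pure{1}+s\pure{3}$ for small $s>0$. At each of them $b$ is uniquely optimal and $U(\beta_s,b)=1-s/2$, so the pair $(\beta_s,b)$ has assessment within $O(s)$ of $\bar q(\alpha)$. The task is to show that the equilibrium continuation from some $\beta_s$ keeps the path near $(\pure{1},b)$ for all future periods, up to an error that vanishes with $s$. The self located at $\beta_s$ itself strictly prefers $b$ to $a$, so Step~1 applies to it as well. I would exploit this self-similarity through the equilibrium value $S$: it is convex and Lipschitz (the property used in \cref{thm:genericpurification}), and at $\beta_s$ it is at least the value of repeating $\beta_s$. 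Combining this with the summable-loss bound \eqref{eq:totalgap} of \cref{thm:alignment} should show the following. Along the path from $\beta_s$, the current-worldview losses $h_t-U(\alpha_t,x_{t+k})$ are small, so a self that strictly prefers $b$ cannot tolerate substantial $a$-play, and the states remain near the edge $\{\alpha^2=0\}$ close to $\pure{1}$. Then every future pair is $b$ at a state with $U(\cdot,b)$ near $1$, and $C_{\phi,z}(\beta_s;\alpha)\to\bar q(\alpha)/(1-\delta)$ as $s\to0$.

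\emph{Main obstacle.} Step~2 is the hard part. The equilibrium policy from $\beta_s$ is unknown and may be discontinuous. In particular, it might jump to $\pure{1}$ and thereby switch to $a$, or drift toward worldview~$2$. What I would try first is a contradiction on the quantity $\Delta=\bar q(\beta)/(1-\delta)-S(\beta)$, taking $\beta$ near $\pure{1}$ on the $3$-edge. Choose $\beta$ so that $\Delta$ nearly attains its infimum along the edge. If that infimum were positive, the self at $\beta$ could move to a nearby edge state with smaller deficit. The linearity in $\alpha$ of $q_\alpha$, together with the Lipschitz bound on $S$, should make this move a strict improvement, contradicting optimality. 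If the infimum is zero, the supremum in Step~1 is approached, which is the conclusion needed. Finally, the case $z(\pure{1})=b$ is handled identically, using current states that strictly prefer $a$ (large $\alpha^2$) and the edge toward worldview~$2$, where $U(\beta_s,a)=1-s$. Both cases yield non-attainment of the continuation optimum, so no stationary pure-strategy MPE exists for any $0<\lambda,\delta<1$.
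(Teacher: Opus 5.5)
Your Step~1 is correct and is the same as the paper's closing step. For $\alpha$ with $d(\alpha):=U(\alpha,b)-U(\alpha,a)>0$, no successor attains $\bar q(\alpha)/(1-\delta)$. So everything rests on showing that this value is the supremum, and that is where the proposal has a genuine gap.

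Write $p=\pure{1}$, $N(\beta)=\sum_k\delta^k\mathbf{1}_{\{z(\phi^k(\beta))=b\}}$ and $D(\beta)=\sum_k\delta^k[1-h(\phi^k(\beta))]$. Then $C_{\phi,z}(\beta;\alpha)=\frac{(1-\lambda)+\lambda U(\alpha,a)}{1-\delta}+\lambda d(\alpha)N(\beta)-(1-\lambda)D(\beta)$. The supremum claim therefore needs successors near $p$ with $D\to0$ and $N\to1/(1-\delta)$, i.e.\ paths that play $b$ at essentially every date. Neither tool you name delivers this.

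\begin{itemize}
\item The loss bound \eqref{eq:totalgap} from $\beta_s$ is at most $(1-\lambda)s/[2\lambda(1-\delta)^2\delta^{k-1}]$. This is the same order as the loss $d(\beta_s)=5s/2$ that $a$-play causes at $\beta_s$, so it cannot exclude $a$ at later dates. Yet for your fixed $\alpha$ each such date costs $\lambda d(\alpha)$, which is not small.
\item The deficit plan fails for the same reason. Postponement already gives $\Delta(\beta_s)\le(1-\lambda)s/[2(1-\delta)]\to0$, so the positive-infimum case never arises. A small $\Delta(\beta)$ also says nothing about $N$, because the self at $\beta$ weights $b$-play only by $\lambda d(\beta)\to0$.
\end{itemize}

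The missing idea is an invariance argument valid for every $\alpha\in S_b=\{d>0\}$, however small $d(\alpha)$ is.

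\begin{enumerate}
\item Show $\phi(p)=p$: at $p$ every assessment is at most $1$, with equality only at successor $p$.
\item Show that $d(\alpha)\le0$ forces $\phi(\alpha)=p$ uniquely. Both variable terms are then nonpositive, and $D(\beta)\ge1-h(\beta)>0$ for $\beta\ne p$.
\item Note that $H_0\ge h(\beta)$, from \eqref{eq:Hnextgeht} and \eqref{eq:Hrecursion}, gives $D(\beta)\le(1-h(\beta))/(1-\delta)$ whatever the unknown policy does. Along $\beta_\varepsilon\to p$ inside $S_b$ you therefore have $D\to0$ and $N\ge1$. Optimality then yields $\lambda d(\alpha)N(\phi(\alpha))-(1-\lambda)D(\phi(\alpha))\ge\lambda d(\alpha)$.
\item This rules out $d(\phi(\alpha))\le0$. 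A successor playing $a$ moves to $p$ and has $N=0$. A successor playing $b$ with $d=0$ differs from $p$, has $N=1$, and has $D>0$. Either way the inequality in step~3 fails.
\end{enumerate}

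Hence $\phi(S_b)\subseteq S_b$ and $N\equiv1/(1-\delta)$ on $S_b$. The supremum then equals $\bar q(\alpha)/(1-\delta)$, and your Step~1 finishes the proof.

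Your claim that $\pure{1}$ is the only state where both actions are optimal is false. $U(\alpha,a)=U(\alpha,b)$ holds on the whole segment $\alpha^2=\tfrac52\alpha^3$, which joins $\pure{1}$ to $(0,5/7,2/7)$. Those tie states $\beta\ne p$ with $z(\beta)=b$ are exactly what the last case of the invariance step must exclude.
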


Both actions have the same unique maximizing worldview. The failure of existence arises because their tie at that worldview prevents some selves from attaining a best continuation. \Cref{sec:weaktailoring} proves this nonattainment result.

\subsection{Postponing a change in worldview}

Choosing the current worldview once more postpones the stationary continuation by one period.  The following identity expresses this deviation; this postponement comparison also appears in \citet[Online Appendix, proof of Proposition~2, p.~3]{BBMZ2021}.

\begin{lemma}[One-period postponement]\label{lem:postponement}
For a stationary MPE $(\phi,z)$ and any current worldview $\alpha$, let
\[
 S(\alpha)=C_{\phi,z}(\phi(\alpha);\alpha).
\]
Then
\begin{equation}\label{eq:postponeidentity}
 C_{\phi,z}(\alpha;\alpha)=h(\alpha)+\delta S(\alpha),
 \qquad S(\alpha)\geq\frac{h(\alpha)}{1-\delta}.
\end{equation}
\end{lemma}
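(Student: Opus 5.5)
The identity comes from unrolling the series \eqref{eq:continuationV} by one term, and the inequality then follows from the equilibrium condition \eqref{eq:MPEworldview} used twice: once at $\alpha$ and once for each continuation term.

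\textbf{The identity.} Take $\beta=\alpha$ in \eqref{eq:continuationV}. The $k=0$ term is evaluated at the pair $(\alpha,z(\alpha))$. Because the current and successor worldviews coincide, the bracket collapses to
\[
(1-\lambda)U(\alpha,z(\alpha))+\lambda U(\alpha,z(\alpha))=U(\alpha,z(\alpha)).
\]
By \eqref{eq:MPEaction}, this equals $h(\alpha)$. For $k\ge1$, write $\phi^k(\alpha)=\phi^{k-1}(\phi(\alpha))$ and reindex with $m=k-1$. The remaining sum is then $\delta\sum_{m\ge0}\delta^m[\cdots]$, evaluated along the orbit of $\phi(\alpha)$ with the current worldview $\alpha$ held fixed in the $\lambda$-term. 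That sum is exactly $\delta\,C_{\phi,z}(\phi(\alpha);\alpha)=\delta S(\alpha)$. Absolute convergence, which follows from the uniform tail bound, justifies splitting off the first term and reindexing.

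\textbf{The inequality.} Apply \eqref{eq:MPEworldview} with the deviation $\beta=\alpha$. This gives $S(\alpha)\ge C_{\phi,z}(\alpha;\alpha)=h(\alpha)+\delta S(\alpha)$. Rearranging yields $(1-\delta)S(\alpha)\ge h(\alpha)$, and dividing by $1-\delta>0$ gives $S(\alpha)\ge h(\alpha)/(1-\delta)$.

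There is no real obstacle here. The one point to check is that the deviation $\beta=\alpha$ is always feasible, which holds because $\beta$ ranges over all of $\Delta^n$. This fact, and not any continuity of $\phi$, is what makes the argument work for every stationary MPE.
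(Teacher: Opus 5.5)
Your proof is correct and follows the paper's argument: split off the $k=0$ term, which equals $h(\alpha)$ by \eqref{eq:MPEaction}; reindex the tail via $\phi^k(\alpha)=\phi^{k-1}(\phi(\alpha))$ to obtain $\delta S(\alpha)$; then apply \eqref{eq:MPEworldview} with the always-feasible deviation $\beta=\alpha$. Your opening sentence slightly misdescribes this, since the worldview condition is used only once and the first term relies on the action condition, but the argument you actually write out is sound.
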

\begin{proof}
If the next worldview is $\alpha$, the next action is $z(\alpha)$, so the first bracket in \eqref{eq:continuationV} is
\[
 (1-\lambda)U(\alpha,z(\alpha))+\lambda U(\alpha,z(\alpha))=h(\alpha).
\]
For $k\geq1$, $\phi^k(\alpha)=\phi^{k-1}(\phi(\alpha))$.  Reindexing the absolutely convergent tail gives
\begin{align}
 C_{\phi,z}(\alpha;\alpha)-h(\alpha)
 &=\sum_{k=1}^{\infty}\delta^k
   \bigl[(1-\lambda)U(\phi^k(\alpha),z(\phi^k(\alpha)))
          +\lambda U(\alpha,z(\phi^k(\alpha)))\bigr]\notag\\
 &=\delta\sum_{r=0}^{\infty}\delta^r
   \bigl[(1-\lambda)U(\phi^r(\phi(\alpha)),z(\phi^r(\phi(\alpha))))\notag\\
 &\hspace{42mm}+\lambda U(\alpha,z(\phi^r(\phi(\alpha))))\bigr]\notag\\
 &=\delta C_{\phi,z}(\phi(\alpha);\alpha)
 =\delta S(\alpha).\label{eq:postponementreindex}
\end{align}
Since choosing $\alpha$ is feasible, \eqref{eq:MPEworldview} now gives
\[
 S(\alpha)\geq h(\alpha)+\delta S(\alpha)
 \quad\Longrightarrow\quad
 (1-\delta)S(\alpha)\geq h(\alpha)
 \quad\Longrightarrow\quad
 S(\alpha)\geq\frac{h(\alpha)}{1-\delta}.\qedhere
\]
\end{proof}

\section{A three-action example}\label{sec:example}

\subsection{The environment}

Consider the payoff array \eqref{eq:countermatrix}, with $\alpha\in[0,1]$ denoting the weight on worldview~1.  Its momentary utility functions are
\begin{equation}\label{eq:counterlines}
 U(\alpha,c)=0,
 \qquad
 U(\alpha,a)=4\alpha-3,
 \qquad
 U(\alpha,b)=1-4\alpha.
\end{equation}
The upper envelope of these three utility functions determines the current action, as shown in \cref{fig:utilitylines}.  The static optimal-action correspondence is
\begin{equation}\label{eq:Bcounter}
\B(\alpha)=
\begin{cases}
\{b\},&0\leq\alpha<\frac14,\\
\{b,c\},&\alpha=\frac14,\\
\{c\},&\frac14<\alpha<\frac34,\\
\{c,a\},&\alpha=\frac34,\\
\{a\},&\frac34<\alpha\leq1.
\end{cases}
\end{equation}
\begin{samepage}
In particular, the balanced worldview $\bar\alpha=1/2$ uniquely chooses $c$, whereas the two pure worldviews uniquely choose different extreme actions:
\[
 \B(1/2)=\{c\},\qquad \B(0)=\{b\},\qquad \B(1)=\{a\}.
\]
\end{samepage}

\begin{figure}[!htbp]
\centering
\begin{tikzpicture}
\begin{axis}[
 width=.84\textwidth,
 height=.48\textwidth,
 xmin=0,xmax=1,
 ymin=-3.3,ymax=1.6,
 axis lines=left,
 xlabel={Weight $\alpha$ on worldview 1},
 ylabel={Utility},
 xtick={0,0.25,0.5,0.75,1},
 xticklabels={$0$,$\frac14$,$\frac12$,$\frac34$,$1$},
 ytick={-3,-1,0,1},
 tick label style={font=\small},
 label style={font=\small},
 legend style={at={(0.5,-0.24)},anchor=north,legend columns=4,
               draw=none,font=\small,column sep=9pt},
 clip=false
]
\addplot[domain=0:1,samples=2,line width=.55pt] {0};
\addlegendentry{$U(\alpha,c)$}
\addplot[domain=0:1,samples=2,line width=.65pt,dashed] {4*x-3};
\addlegendentry{$U(\alpha,a)$}
\addplot[domain=0:1,samples=2,line width=.65pt,densely dotted] {1-4*x};
\addlegendentry{$U(\alpha,b)$}
\addplot[line width=1.45pt] coordinates {(0,1) (0.25,0) (0.75,0) (1,1)};
\addlegendentry{$h(\alpha)$}
\draw[densely dashed,line width=.35pt] (axis cs:0.25,-3.3) -- (axis cs:0.25,0);
\draw[densely dashed,line width=.35pt] (axis cs:0.75,-3.3) -- (axis cs:0.75,0);
\node[font=\small] at (axis cs:0.125,1.35) {$b$ optimal};
\node[font=\small] at (axis cs:0.5,1.35) {$c$ optimal};
\node[font=\small] at (axis cs:0.875,1.35) {$a$ optimal};
\end{axis}
\end{tikzpicture}
\caption{Current action choice in \eqref{eq:countermatrix}.  The bold upper envelope is $h(\alpha)=\max_x U(\alpha,x)$.  The compromise action $c$ is uniquely optimal for $1/4<\alpha<3/4$; at $1/4$ it ties with $b$, and at $3/4$ it ties with $a$.}
\label{fig:utilitylines}
\end{figure}

\begin{samepage}
The common value of $c$ can represent an outside option or a neutral status quo.  The balanced worldview evaluates either extreme action at $-1$, whereas the pure worldview that chooses it obtains utility $1$.  Adopting that pure worldview therefore yields the assessment $(1-\lambda)\cdot1+\lambda\cdot(-1)=1-2\lambda$, compared with zero for $c$.
\end{samepage}
\FloatBarrier

\subsection{Pure worldviews are absorbing}

Both pure worldviews are absorbing throughout $0<\lambda<1$.

\begin{lemma}\label{lem:pureabsorbcounter}
Fix $\delta\in(0,1)$ and $\lambda\in(0,1)$.  In every stationary MPE of the environment \eqref{eq:countermatrix},
\[
 \phi(1)=1,
 \qquad
 \phi(0)=0.
\]
\end{lemma}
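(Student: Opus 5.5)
The plan is to bound the current self's assessment of every statically implementable future pair from above by $h(1)$, and then use \cref{lem:postponement} to force that bound to bind at every date. This is the mechanism of \cref{thm:generaltrap} applied at $\bar\alpha=1$. I treat $\phi(1)=1$; the case $\phi(0)=0$ follows from the symmetry $\alpha\mapsto1-\alpha$, $a\leftrightarrow b$ of \eqref{eq:countermatrix}.

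\emph{Step 1 (upper bound on brackets).} At the current worldview $\alpha=1$, each bracket in \eqref{eq:continuationV} has the form
\[
 q_1(\beta,x)=(1-\lambda)U(\beta,x)+\lambda u_1(x),
 \qquad x=z(\beta)\in z^*(\beta),
\]
by \eqref{eq:MPEaction}. I would check the three cases of \eqref{eq:Bcounter}, using \eqref{eq:counterlines}.
\begin{itemize}
\item If $x=a$, then $U(\beta,a)=4\beta-3\le1$ and $u_1(a)=1$. Hence $q_1\le1$, with equality iff $\beta=1$.
\item If $x=c$, then $q_1=0<1$.
\item If $x=b$, then $\beta\le1/4$, so $U(\beta,b)\le1$ and $u_1(b)=-3$. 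Hence $q_1\le1-4\lambda<1$.
\end{itemize}
So every bracket is at most $1=h(1)$, with equality exactly for the pair $(1,a)$.

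\emph{Step 2 (postponement forces equality).} By \cref{lem:postponement}, $S(1)\ge h(1)/(1-\delta)=1/(1-\delta)$. On the other hand, $S(1)=C_{\phi,z}(\phi(1);1)$ is a discounted sum of brackets, each at most $1$ by Step 1. Therefore every bracket along the continuation from $\phi(1)$ equals $1$. In particular, the $k=0$ bracket gives $q_1(\phi(1),z(\phi(1)))=1$. By the equality case of Step 1, this forces $\phi(1)=1$ and $z(1)=a$.

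\emph{Main obstacle.} The only delicate point is the equality characterization in Step 1. I need the cases to cover every $\beta$ together with every optimal action at $\beta$, including the tie points $1/4$ and $3/4$. The strict inequality $1-4\lambda<1$ for $x=b$ uses $\lambda>0$. No continuity or measurability of $\phi$ is needed, because the argument only uses the pointwise MPE conditions at the single state $\alpha=1$.
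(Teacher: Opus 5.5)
Your proof is correct and matches the paper's argument. The paper also bounds every bracket seen from worldview $1$ by $1$, with strict inequality for $c$ and $b$ and for $a$ unless $\beta=1$. It then uses the one-period deviation back to worldview $1$ (the postponement inequality) to obtain $C\geq 1/(1-\delta)$. The only difference is presentation: the paper assumes $\phi(1)\neq1$ and derives the contradiction $C<1/(1-\delta)$, whereas you force every bracket to equal $1$ and read off the first one.
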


\begin{proof}
The identities
\[
 U(1-\alpha,a)=U(\alpha,b),\qquad
 U(1-\alpha,b)=U(\alpha,a),\qquad U(\alpha,c)=0
\]
show that exchanging $a$ and $b$ and replacing $\alpha$ by $1-\alpha$ leaves the model unchanged.  It therefore suffices to prove absorption at worldview $1$.  Let $(\phi,z)$ be a stationary MPE and suppose, toward a contradiction, that
\[
 \beta_0:=\phi(1)\neq1.
\]
Starting from current worldview $1$, let
\[
 \beta_k=\phi^k(\beta_0),
 \qquad
 y_k=z(\beta_k),
 \qquad k\geq0.
\]
Define the date-$0$ one-period assessment
\[
 q_k=(1-\lambda)U(\beta_k,y_k)+\lambda U(1,y_k).
\]
For every $k$ we have $q_k\leq1$.  Indeed:
\begin{align*}
 y_k=a&\implies q_k
 =(1-\lambda)U(\beta_k,a)+\lambda
 \leq (1-\lambda)+\lambda=1,\\
 y_k=b&\implies q_k
 =(1-\lambda)U(\beta_k,b)-3\lambda
 \leq (1-\lambda)-3\lambda
 =1-4\lambda<1,\\
 y_k=c&\implies q_k=0<1.
\end{align*}
If $y_0=a$, then $\beta_0\neq1$ and \eqref{eq:counterlines} imply
\[
 U(\beta_0,a)<1,
\]
so $q_0<1$.  If $y_0\neq a$, the preceding display also gives $q_0<1$.  Hence
\begin{equation}\label{eq:Cpurestrict}
 C:=C_{\phi,z}(\beta_0;1)
 =\sum_{k=0}^{\infty}\delta^k q_k
 <\frac{1}{1-\delta}.
\end{equation}
If the current self deviates by choosing next worldview $1$, the next-period flow is exactly $1$, and from the following period onward the continuation is the original one.  The deviation payoff is therefore
\[
 1+\delta C.
\]
Equilibrium optimality requires
\[
 C\geq1+\delta C,
\]
or equivalently
\[
 C\geq\frac{1}{1-\delta},
\]
contradicting \eqref{eq:Cpurestrict}.  Thus $\phi(1)=1$.
\end{proof}

\subsection{The effect of mindset flexibility}

\begin{proof}[Proof of \cref{thm:phasetransition}]
Let $(\phi,z)$ be any stationary MPE.  Write
\[
 \alpha_1=\phi\left(\frac12\right),
 \qquad
 \alpha_{k+1}=\phi(\alpha_k),
 \qquad
 x_k=z(\alpha_k),
 \quad k\geq1.
\]
From the perspective of the balanced worldview, define the future one-period assessment
\begin{equation}\label{eq:qmid}
 q_k
 =(1-\lambda)U(\alpha_k,x_k)
 +\lambda U\left(\frac12,x_k\right),
 \qquad k\geq1,
\end{equation}
and the continuation payoff
\begin{equation}\label{eq:Cmid}
 C=\sum_{k=1}^{\infty}\delta^{k-1}q_k
 =C_{\phi,z}\left(\alpha_1;\frac12\right).
\end{equation}
For any future worldview $\beta$ and any action $x\in\B(\beta)$, \eqref{eq:counterlines} gives
\begin{align}
 x=c&\implies
 (1-\lambda)U(\beta,c)
 +\lambda U\left(\frac12,c\right)=0,
 \label{eq:qcmid}\\
 x=a&\implies
 (1-\lambda)U(\beta,a)
 +\lambda U\left(\frac12,a\right)
 \leq (1-\lambda)-\lambda=1-2\lambda,
 \label{eq:qamid}\\
 x=b&\implies
 (1-\lambda)U(\beta,b)
 +\lambda U\left(\frac12,b\right)
 \leq (1-\lambda)-\lambda=1-2\lambda.
 \label{eq:qbmid}
\end{align}
Equality in \eqref{eq:qamid} requires $\beta=1$ and $x=a$; equality in \eqref{eq:qbmid} requires $\beta=0$ and $x=b$.

\smallskip
\noindent\textit{Case 1: $0<\lambda<1/2$.}
Set
\[
 m_\lambda=1-2\lambda>0.
\]
Equations \eqref{eq:qcmid}--\eqref{eq:qbmid} imply
\[
 q_k\leq m_\lambda
 \quad\text{for every }k,
\]
and hence
\begin{equation}\label{eq:Cmidupperlow}
 C\leq\frac{m_\lambda}{1-\delta}.
\end{equation}
By \cref{lem:pureabsorbcounter}, choosing next worldview $1$ produces $(1,a)$ forever.  From the balanced current worldview, the corresponding constant one-period assessment is
\[
 (1-\lambda)U(1,a)
 +\lambda U\left(\frac12,a\right)
 =(1-\lambda)-\lambda
 =m_\lambda.
\]
Therefore equilibrium optimality implies
\begin{equation}\label{eq:Cmidlowerlow}
 C\geq\frac{m_\lambda}{1-\delta}.
\end{equation}
Combining \eqref{eq:Cmidupperlow} and \eqref{eq:Cmidlowerlow} gives
\[
 0=\frac{m_\lambda}{1-\delta}-C
   =\sum_{k=1}^{\infty}\delta^{k-1}(m_\lambda-q_k).
\]
Every summand is nonnegative, so $q_k=m_\lambda$ for every $k\geq1$.  In particular, $q_1=m_\lambda$.  The equality conditions following \eqref{eq:qbmid} imply
\[
 (\alpha_1,x_1)\in\{(1,a),(0,b)\}.
\]
Absorption follows from \cref{lem:pureabsorbcounter}.

\smallskip
\noindent\textit{Case 2: $\lambda=1/2$.}
Equations \eqref{eq:qcmid}--\eqref{eq:qbmid} give $q_k\leq0$.  If the current self chooses $1/2$ again, the next-period flow is zero, and thereafter the original continuation begins.  The deviation payoff is $\delta C$.  Equilibrium optimality yields
\[
 C\geq\delta C,
\]
so $C\geq0$.  Since $C\leq0$, we have $C=0$, and hence every $q_k=0$.  The equality cases in \eqref{eq:qcmid}--\eqref{eq:qbmid} are exactly the pairs listed in \eqref{eq:criticalequalityset}.  The existence of both stationary MPE with and without convergence to a pure worldview is established in \cref{prop:globalMPEcounter} below.

\smallskip
\noindent\textit{Case 3: $1/2<\lambda<1$.}
Now $1-2\lambda<0$.  Equations \eqref{eq:qcmid}--\eqref{eq:qbmid} imply $q_k\leq0$, with equality possible only when $x_k=c$.  The same delay deviation as in Case 2 yields $C\geq\delta C$, hence $C\geq0$.  Therefore $C=0$, every $q_k=0$, and every future action is $c$.  By \eqref{eq:Bcounter}, this forces
\[
 \alpha_k\in\left[\frac14,\frac34\right]
 \quad\text{for every }k\geq1.
\]
The same statements hold at $t=0$ because $\alpha_0=1/2$ and $x_0=c$.  The global constructions in \cref{prop:globalMPEcounter} establish equilibrium existence for every parameter pair, including both outcomes at $\lambda=1/2$.
\end{proof}

The following construction establishes existence for every parameter value in the theorem and supplies both kinds of equilibrium at the threshold.

\begin{proposition}[Explicit global stationary equilibria]\label{prop:globalMPEcounter}
For every $\delta\in(0,1)$ and $\lambda\in(0,1)$, the environment \eqref{eq:countermatrix} admits a stationary pure-strategy MPE.

Define three target states
\[
 p_c=\frac12,
 \qquad
 p_a=1,
 \qquad
 p_b=0,
\]
and, for a current worldview $\alpha$, define
\begin{align}
 g_c(\alpha)&=0,\label{eq:gc}\\
 g_a(\alpha)&=(1-\lambda)+\lambda(4\alpha-3),\label{eq:ga}\\
 g_b(\alpha)&=(1-\lambda)+\lambda(1-4\alpha).
 \label{eq:gb}
\end{align}
Let
\[
 G(\alpha)=\max\{g_c(\alpha),g_a(\alpha),g_b(\alpha)\},
\]
choose
\[
 r(\alpha)\in\argmax_{r\in\{c,a,b\}}g_r(\alpha),
\]
and set
\begin{equation}\label{eq:counterpolicy}
 \phi(\alpha)=p_{r(\alpha)},
 \qquad
 z(\alpha)\in\B(\alpha).
\end{equation}
At $\lambda=1/2$ choose the tie-breaking rule at $\alpha=1/2$ either as $r(1/2)=c$ or as one of the two extreme actions.  Each such choice produces a stationary MPE.  Under the first choice the consumer remains at the mixed worldview $1/2$; under either of the latter choices she adopts a pure worldview after one period.
\end{proposition}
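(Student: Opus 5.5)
The plan is to verify the equilibrium conditions \eqref{eq:MPEaction}--\eqref{eq:MPEworldview} directly for the policy \eqref{eq:counterpolicy}. The argument has three parts: show that each target actually chosen is absorbing and carries its own action forever, compute the equilibrium continuation value from it, and bound every deviation continuation by the same value. Condition \eqref{eq:MPEaction} holds by construction, since $z(\alpha)\in\B(\alpha)$. By \eqref{eq:Bcounter}, $z$ is forced at the targets: $z(1)=a$, $z(0)=b$ and $z(1/2)=c$.

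\emph{Step 1 (targets used are absorbing).} At $\alpha=1$ we have $g_a(1)=1$, $g_b(1)=1-4\lambda<1$ and $g_c(1)=0<1$, so $r(1)=a$ and $\phi(1)=1$; the symmetry $\alpha\mapsto1-\alpha$ gives $\phi(0)=0$. For $p_c=1/2$, I will check when $c$ can be selected at all. Since $g_a(\alpha)+g_b(\alpha)=2(1-2\lambda)$, we get $\max\{g_a,g_b\}\ge1-2\lambda$, with equality only at $\alpha=1/2$.
\begin{itemize}
\item If $\lambda<1/2$, then $c$ is never selected.
\item If $\lambda=1/2$, then $c$ can be selected only at $\alpha=1/2$, and only under the tie-breaking choice $r(1/2)=c$.
\item If $\lambda>1/2$, then $g_a(1/2)=g_b(1/2)=1-2\lambda<0=g_c(1/2)$, so $r(1/2)=c$ and $\phi(1/2)=1/2$.
\end{itemize}
Hence in every case and under every admissible tie-breaking rule, whenever $p_c$ is chosen it is itself absorbing. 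Consequently, choosing $\phi(\alpha)=p_r$ produces the constant stream $(p_r,r)$ from the next period on. Its one-period assessment from $\alpha$ is $(1-\lambda)U(p_r,r)+\lambda U(\alpha,r)=g_r(\alpha)$, using $U(1,a)=U(0,b)=1$ and $U(\cdot,c)=0$. Thus $C_{\phi,z}(\phi(\alpha);\alpha)=G(\alpha)/(1-\delta)$.

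\emph{Step 2 (upper bound for deviations).} Fix any $\beta$. Along the induced sequence $\phi^k(\beta)$ with actions $y_k=z(\phi^k(\beta))$, each bracket in \eqref{eq:continuationV} is at most $G(\alpha)$. This follows from $U(\gamma,a)\le1$, $U(\gamma,b)\le1$ and $U(\gamma,c)=0$ for every $\gamma\in[0,1]$: the bracket is then at most $g_{y_k}(\alpha)\le G(\alpha)$. This uses only that each pure evaluation is bounded by $M(x)$, attained at the target, so no knowledge of $\phi$ off the targets is needed. Summing gives $C_{\phi,z}(\beta;\alpha)\le G(\alpha)/(1-\delta)$, which is attained by Step 1. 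This proves \eqref{eq:MPEworldview}.

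\emph{Step 3 (threshold).} At $\lambda=1/2$ and $\alpha=1/2$ all three $g_r$ equal zero, so each tie-breaking choice is admissible. With $r(1/2)=c$ the consumer stays at $1/2$; with $r(1/2)\in\{a,b\}$ she moves to the pure worldview $1$ or $0$ and stays there by Step 1. The main obstacle is Step 1: the construction is only self-consistent if the mixed target $p_c$ is never selected at a state from which it would not itself be re-selected. The identity $g_a+g_b=2(1-2\lambda)$ is what I expect to settle this cleanly in all three regimes.
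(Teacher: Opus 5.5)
Your proposal is correct and follows essentially the same route as the paper. Both arguments show that every selected target is absorbing, using $g_a+g_b=2(1-2\lambda)$ to control when $p_c$ can be chosen, and that the selected destination attains $G(\alpha)/(1-\delta)$; the only cosmetic difference is that you apply the bound $(1-\lambda)U(\gamma,y)+\lambda U(\alpha,y)\le g_y(\alpha)\le G(\alpha)$ at every date of an arbitrary deviation path, whereas the paper applies it to the first flow and then uses that later states sit at an absorbing target.
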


\begin{proof}
We first verify that every state selected by the policy is absorbing.  At $p_a=1$,
\[
 g_a(1)=1,
 \qquad
 g_c(1)=0,
 \qquad
 g_b(1)=1-4\lambda<1,
\]
so $\phi(1)=1$.  At the other endpoint,
\[
 g_b(0)=1>\max\{0,1-4\lambda\}=\max\{g_c(0),g_a(0)\},
\]
so $\phi(0)=0$ as well.  If $r(\alpha)=c$ for some $\alpha$, then necessarily $\lambda\geq1/2$.  For $\lambda>1/2$,
\[
 g_c\left(\frac12\right)=0
 >1-2\lambda
 =g_a\left(\frac12\right)
 =g_b\left(\frac12\right),
\]
so $\phi(p_c)=p_c$.  At $\lambda=1/2$,
\[
 g_a(\alpha)=2\alpha-1,\qquad g_b(\alpha)=1-2\alpha,
 \qquad G(\alpha)=|2\alpha-1|.
\]
Thus $c$ is a maximizer only at $\alpha=1/2$.  If $r(1/2)=c$, then $p_c$ is absorbing.  If $r(1/2)=a$ or $b$, no state selects $p_c$, and the range of $\phi$ consists only of the absorbing endpoints.  If $\lambda<1/2$, $g_c$ is never maximal because
\[
 \max\{g_a(\alpha),g_b(\alpha)\}
 \geq \frac{g_a(\alpha)+g_b(\alpha)}{2}
 =1-2\lambda>0=g_c(\alpha).
\]
Thus $p_c$ is never selected in that case.

Fix a current worldview $\alpha$ and an arbitrary candidate next worldview $\beta$.  Let $y=z(\beta)$.  The first continuation flow satisfies
\begin{equation}\label{eq:firstflowG}
 (1-\lambda)U(\beta,y)+\lambda U(\alpha,y)
 \leq g_y(\alpha)
 \leq G(\alpha).
\end{equation}
Indeed, the first inequality is equality for $y=c$; for $y=a$ it follows from $U(\beta,a)\leq1$; and for $y=b$ it follows from $U(\beta,b)\leq1$.

From the next period onward, the policy sends $\beta$ to the absorbing target $p_{r(\beta)}$.  Every later flow, still evaluated by the original worldview $\alpha$, is
\[
 g_{r(\beta)}(\alpha)\leq G(\alpha).
\]
Consequently
\begin{equation}\label{eq:VupperG}
 C_{\phi,z}(\beta;\alpha)
 \leq\frac{G(\alpha)}{1-\delta}.
\end{equation}
Choosing
\[
 \beta=p_{r(\alpha)}
\]
produces the constant flow $G(\alpha)$ starting immediately, and therefore attains the upper bound in \eqref{eq:VupperG}.  Thus \eqref{eq:MPEworldview} holds for every $\alpha$, while \eqref{eq:MPEaction} holds by construction.
\end{proof}

For $\lambda>1/2$, the construction has a particularly simple form if ties among destination scores are resolved in favor of $p_c=1/2$.  Define
\[
 \ell_\lambda=\frac{1}{4\lambda},\qquad
 r_\lambda=1-\frac{1}{4\lambda}.
\]
Since
\[
 g_b(\alpha)\leq0\iff\alpha\geq\ell_\lambda,
 \qquad
 g_a(\alpha)\leq0\iff\alpha\leq r_\lambda,
\]
we have $g_c(\alpha)=G(\alpha)$ precisely on $[\ell_\lambda,r_\lambda]$.  To the left, $g_b>0>g_a$; to the right, $g_a>0>g_b$.  Thus the selected equilibrium policy is
\begin{equation}\label{eq:illustratedpolicy}
 \phi(\alpha)=
 \begin{cases}
 0,&0\leq\alpha<\ell_\lambda,\\
 \tfrac12,&\ell_\lambda\leq\alpha\leq r_\lambda,\\
 1,&r_\lambda<\alpha\leq1.
 \end{cases}
\end{equation}
For $1/2<\lambda<1$,
\[
 \frac14<\ell_\lambda<\frac12<r_\lambda<\frac34.
\]
The interval of initial worldviews sent to $1/2$ by this policy is therefore strictly smaller than the static region where $c$ is optimal.  \Cref{fig:equilibriumpolicy} shows the policy at $\lambda=2/3$.  Its intersections with the diagonal give the three absorbing worldviews $0$, $1/2$, and $1$.

\begin{figure}[t]
\centering
\begin{tikzpicture}
\begin{axis}[
 width=.70\textwidth,
 height=.49\textwidth,
 xmin=-0.03,xmax=1.03,
 ymin=-0.055,ymax=1.08,
 axis lines=left,
 xlabel={Current worldview $\alpha$},
 ylabel={Next worldview $\phi(\alpha)$},
 xtick={0,0.375,0.5,0.625,1},
 xticklabels={$0$,$\frac38$,$\frac12$,$\frac58$,$1$},
 ytick={0,0.5,1},
 yticklabels={$0$,$\frac12$,$1$},
 tick label style={font=\small},
 label style={font=\small},
 clip=false
]
\addplot[domain=0:1,samples=2,dashed,line width=.45pt] {x};
\addplot[line width=1.15pt] coordinates {(0,0) (0.375,0)};
\addplot[line width=1.15pt] coordinates {(0.375,0.5) (0.625,0.5)};
\addplot[line width=1.15pt] coordinates {(0.625,1) (1,1)};
\addplot[only marks,mark=*,mark size=2.3pt,
 mark options={solid,fill=white,line width=.7pt}]
 coordinates {(0.375,0) (0.625,1)};
\addplot[only marks,mark=*,mark size=2.1pt,
 mark options={solid,fill=black}]
 coordinates {(0,0) (0.375,0.5) (0.5,0.5) (0.625,0.5) (1,1)};
\end{axis}
\end{tikzpicture}
\caption{The equilibrium selection \eqref{eq:illustratedpolicy} in the symmetric example at $\lambda=2/3$, for any $0<\delta<1$, with ties among destination scores assigned to the mixed destination $1/2$. Initial worldviews in $[3/8,5/8]$ reach the absorbing mixed worldview $1/2$ in one period; those outside this interval reach a pure worldview.  Filled and open circles give the included and excluded endpoint values.  The dashed line is the identity.}
\label{fig:equilibriumpolicy}
\end{figure}
\FloatBarrier

\subsection{Any finite number of pure worldviews}\label{sec:manyworldviews}

The same mechanism operates with arbitrarily many pure worldviews.  The following family also makes the dependence on the number of perspectives and the disagreement over actions explicit.

\begin{proof}[Proof of \cref{thm:manyworldviews}]
Linearity gives
\begin{equation}\label{eq:nworldutility}
 U(\alpha,a_i)=(1+M)\alpha^i-M,\qquad
 U(\bar\alpha,a_i)=\frac{1-(n-1)M}{n}<0.
\end{equation}
Consequently $c$ is uniquely optimal at $\bar\alpha$ and is optimal at $\alpha$ exactly when $\alpha\in C_c$.

Fix $i$ and evaluate any future pair $(\beta,y)$ from the pure worldview $\pure{i}$.  For $y=a_i$ the assessment is at most $1$, with equality only at $\beta=\pure{i}$.  For $y=a_j$, $j\neq i$, it is at most $1-\lambda(1+M)<1$; for $y=c$ it is zero.  Thus every continuation from $\pure{i}$ is bounded by $1/(1-\delta)$.  The postponement inequality gives the reverse bound in equilibrium.  Equality of the discounted sum forces equality in its first term, so $\phi(\pure{i})=\pure{i}$ and $z(\pure{i})=a_i$.

From $\bar\alpha$, every future extreme action has assessment at most
\begin{equation}\label{eq:nworldassessment}
 (1-\lambda)+\lambda\frac{1-(n-1)M}{n}
 =1-\frac{\lambda}{\lambda_c}=:q_\lambda,
\end{equation}
with equality only for $(\pure{i},a_i)$.  The assessment of $c$ is zero.  If $q_\lambda>0$, a pure absorbing destination attains the uniform bound $q_\lambda/(1-\delta)$; equality in the first-period bound forces immediate adoption of a pure worldview.  If $q_\lambda<0$, every continuation from $\bar\alpha$ is nonpositive, while postponement gives a nonnegative equilibrium continuation value because $h(\bar\alpha)=0$.  Every future assessment must therefore be zero, which forces $x_t=c$ and $\alpha_t\in C_c$.  For any $\alpha\in C_c$,
\[
 \|\alpha-\pure{i}\|_1
 =1-\alpha^i+\sum_{j\neq i}\alpha^j
 =2(1-\alpha^i)\geq\frac{2}{1+M}.
\]
This proves the distance bound.  If $s\in\mathcal T_\lambda$, then, for every $i$,
\[
 U(s,a_i)<1-\frac1\lambda<0,\qquad
 (1-\lambda)+\lambda U(s,a_i)<0.
\]
Thus $h(s)=0$ and the same postponement comparison forces $c$ at every future date.  Since $\lambda>\lambda_c$ implies $1-1/[\lambda(1+M)]>1/n$, the set $\mathcal T_\lambda$ contains $\bar\alpha$ and is relatively open and nonempty.  At $q_\lambda=0$, equality permits only compromise pairs and the pairs $(\pure{i},a_i)$.

For existence, define destinations $p_c=\bar\alpha$, $p_i=\pure{i}$ and scores
\begin{equation}\label{eq:nworldscores}
 g_c(\alpha)=0,\qquad
 g_i(\alpha)=1-\lambda(1+M)(1-\alpha^i),\qquad
 G(\alpha)=\max\{0,g_1(\alpha),\ldots,g_n(\alpha)\}.
\end{equation}
Let $r(\alpha)$ maximize these scores and put $\phi(\alpha)=p_{r(\alpha)}$, with any static-optimal action selection.  Each $p_i$ is absorbing because its own score is $1$ and all other scores are strictly lower.  Moreover,
\[
 \max_i g_i(\alpha)\geq\frac1n\sum_{i=1}^n g_i(\alpha)
 =q_\lambda.
\]
If $q_\lambda>0$, the destination $p_c$ is never selected.  If $q_\lambda<0$, it is absorbing, since its other scores all equal $q_\lambda$.  If $q_\lambda=0$, the common destination can be selected only at $\bar\alpha$: indeed $g_i(\alpha)=\lambda(1+M)(\alpha^i-1/n)$, so $\max_i g_i(\alpha)\leq0$ forces all weights to equal $1/n$.  At that state choose either $p_c$ or a pure destination.  Thus every destination actually selected is absorbing, in all three cases.

At every date of any deviation from a current state $\alpha$, an extreme action $a_i$ gives an assessment at most $g_i(\alpha)$, and $c$ gives zero.  Hence the deviation payoff is at most $G(\alpha)/(1-\delta)$.  Selecting $p_{r(\alpha)}$ attains this bound in every future period.  This verifies the equilibrium inequalities at every state and provides both continuations at the critical value.
\end{proof}

For three perspectives and $M=2$, the static region in which $c$ is optimal is $C_c=\{\alpha:\max_i\alpha^i\leq2/3\}$. When $\lambda>1/2$, every equilibrium starting from $\mathcal T_\lambda$ remains in $C_c$ and at $\ell^1$ distance at least $2/3$ from purity. The set $C_c$ constrains subsequent states; $\mathcal T_\lambda$ is the sufficient initial region, which contains the equal mixture.

\begin{proposition}[Asymmetric peak utilities]\label{prop:asymmetric}
The persistence of mixed worldviews does not rely on the two pure worldviews having the same maximal utility.  Consider instead
\begin{equation}\label{eq:asymmatrix}
\begin{array}{c|ccc}
 & c & a & b\\ \hline
 u_1 & 0 & 1 & -5\\
 u_2 & 0 & -3 & 2
\end{array}.
\end{equation}
Then pure worldview $2$ has strictly greater maximal utility than pure worldview $1$:
\[
 \max_{x\in X}u_2(x)=2>1=\max_{x\in X}u_1(x).
\]
Fix $\delta\in(0,1)$ and $\lambda\in(4/7,1)$.  From the initial worldview $\alpha_0=1/2$, every stationary MPE satisfies
\begin{equation}\label{eq:asymtrapconclusion}
 x_t=c,
 \qquad
 \alpha_t\in\left[\frac27,\frac34\right]
 \qquad\text{for every }t\geq0.
\end{equation}
In particular, no stationary MPE approaches the set of pure worldviews.  Moreover, a stationary pure-strategy MPE exists for every such $(\delta,\lambda)$.
\end{proposition}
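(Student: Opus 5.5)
We will obtain the persistence conclusion directly from \cref{thm:generaltrap} with $\bar\alpha=1/2$, and prove existence by copying the destination-score construction of \cref{prop:globalMPEcounter}.

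\textbf{Static structure.} By linearity, the payoffs \eqref{eq:asymmatrix} give
\[
 U(\alpha,c)=0,\qquad U(\alpha,a)=4\alpha-3,\qquad U(\alpha,b)=2-7\alpha.
\]
Hence $c\in\B(\beta)$ exactly when $\beta\in[2/7,3/4]$. Moreover $\B(1/2)=\{c\}$, $\B(1)=\{a\}$ and $\B(0)=\{b\}$, and $h(1/2)=0$, with $U(1/2,a)=-1$ and $U(1/2,b)=-3/2$.

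\textbf{Persistence via \cref{thm:generaltrap}.} We check the upper-bound condition \eqref{eq:flowceiling} at $\bar\alpha=1/2$, one action at a time.
\begin{itemize}
\item For any $\beta$ with $c\in\B(\beta)$, the assessment is $q_{1/2}(\beta,c)=0$.
\item For $a\in\B(\beta)$, we have $U(\beta,a)\le1$, so $q_{1/2}(\beta,a)\le(1-\lambda)-\lambda=1-2\lambda$.
\item For $b\in\B(\beta)$, we have $U(\beta,b)\le2$, so $q_{1/2}(\beta,b)\le2(1-\lambda)-\tfrac32\lambda=2-\tfrac72\lambda$.
\end{itemize}
The second bound is negative once $\lambda>1/2$, and the third is negative exactly when $\lambda>4/7$; this is where the hypothesis $\lambda>4/7$ enters. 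So for $\lambda\in(4/7,1)$ condition \eqref{eq:flowceiling} holds.

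\cref{thm:generaltrap} then gives $q_{1/2}(\alpha_t,x_t)=0$ for all $t\ge1$. Since the assessments of $a$ and $b$ are strictly negative, equality forces $x_t=c$, and therefore $\alpha_t\in[2/7,3/4]$. Equivalently, $K(1/2)=[2/7,3/4]$, which is disjoint from $\{0,1\}$. The case $t=0$ is immediate, since $\alpha_0=1/2$ and $\B(1/2)=\{c\}$.

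\textbf{Existence.} We use destinations $p_c=1/2$, $p_a=1$, $p_b=0$ and scores
\[
 g_c(\alpha)=0,\qquad g_a(\alpha)=(1-\lambda)+\lambda(4\alpha-3),\qquad g_b(\alpha)=2(1-\lambda)+\lambda(2-7\alpha).
\]
Let $r(\alpha)$ be any maximizer of these scores, set $\phi(\alpha)=p_{r(\alpha)}$, and let $z(\alpha)\in\B(\alpha)$.

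First, every selected destination must be absorbing, and with its intended action. The three targets are, with strict score comparisons:
\begin{itemize}
\item At $1$: $g_a(1)=1$, which exceeds $g_b(1)=2-7\lambda$ and $g_c(1)=0$.
\item At $0$: $g_b(0)=2$, which exceeds $g_a(0)=1-4\lambda$ and $0$.
\item At $1/2$: $g_c=0$, which exceeds $g_a(1/2)=1-2\lambda$ and $g_b(1/2)=2-\tfrac72\lambda$, since $\lambda>4/7$.
\end{itemize}
Because the static actions at the targets are unique, each target $p_y$ is absorbing and plays $y$ forever. From current state $\alpha$, this generates the constant flow $g_y(\alpha)$.

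Second, we need the uniform deviation bound $C_{\phi,z}(\beta;\alpha)\le G(\alpha)/(1-\delta)$, where $G=\max\{g_c,g_a,g_b\}$.
\begin{itemize}
\item The first-period flow at an arbitrary $\beta$ is at most $g_{z(\beta)}(\alpha)$. This uses $U(\beta,a)\le1$ and $U(\beta,b)\le2$.
\item Every later flow equals $g_{r(\beta)}(\alpha)\le G(\alpha)$.
\end{itemize}
Choosing $p_{r(\alpha)}$ attains this bound, which verifies \eqref{eq:MPEworldview}.

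\textbf{Main obstacle.} The only delicate point is ensuring that the first-period flow at an arbitrary, possibly non-target, $\beta$ is dominated by a score. That step rests on the pure-worldview peak bounds $\max_\beta U(\beta,a)=1$ and $\max_\beta U(\beta,b)=2$. Once these are in hand, the argument is identical to \cref{prop:globalMPEcounter}.
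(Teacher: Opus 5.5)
Your proposal is correct and follows the paper's proof. For persistence, the paper does not cite \cref{thm:generaltrap}; it inlines the same postponement argument ($C\le 0$ from the assessment bounds, $C\ge\delta C$ from repeating $1/2$), and citing the theorem causes no circularity. Your existence construction, with scores $g_c,g_a,g_b$, strictly absorbing targets $1/2,1,0$, and the bound $C_{\phi,z}(\beta;\alpha)\le G(\alpha)/(1-\delta)$ attained at $p_{r(\alpha)}$, is the paper's construction.
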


\begin{proof}
For the environment \eqref{eq:asymmatrix},
\begin{equation}\label{eq:asymlines}
 U(\alpha,c)=0,
 \qquad
 U(\alpha,a)=4\alpha-3,
 \qquad
 U(\alpha,b)=2-7\alpha.
\end{equation}
Hence
\begin{equation}\label{eq:asymBcell}
 c\in\B(\alpha)
 \quad\Longleftrightarrow\quad
 \alpha\in\left[\frac27,\frac34\right],
\end{equation}
and $c$ is uniquely optimal at $\bar\alpha=1/2$.  From the perspective of $\bar\alpha$,
\[
 U\left(\frac12,a\right)=-1,
 \qquad
 U\left(\frac12,b\right)=-\frac32.
\]
For any future worldview $\beta$ and any $y\in\B(\beta)$, the corresponding one-period assessment is bounded as follows:
\begin{align}
 y=c
 &\implies
 (1-\lambda)U(\beta,c)+\lambda U\left(\frac12,c\right)=0,
 \label{eq:asymqc}\\
 y=a
 &\implies
 (1-\lambda)U(\beta,a)+\lambda U\left(\frac12,a\right)
 \leq 1-2\lambda<0,
 \label{eq:asymqa}\\
 y=b
 &\implies
 (1-\lambda)U(\beta,b)+\lambda U\left(\frac12,b\right)
 \leq 2-\frac72\lambda<0.
 \label{eq:asymqb}
\end{align}
The last strict inequality is exactly $\lambda>4/7$.  Let $C$ denote the equilibrium continuation payoff from $\bar\alpha$, with the leading discount factor removed as in \eqref{eq:Cmid}.  Equations \eqref{eq:asymqc}--\eqref{eq:asymqb} imply $C\leq0$.  Repeating $\bar\alpha$ for one period gives deviation payoff $\delta C$, so equilibrium optimality gives $C\geq\delta C$ and therefore $C\geq0$.  Thus $C=0$.  Since every noncompromise flow is strictly negative, every future action is $c$.  Equation \eqref{eq:asymBcell} proves \eqref{eq:asymtrapconclusion}.

To establish existence, we specify the policy at every worldview.  Define
\[
 p_c=\frac12,
 \qquad
 p_a=1,
 \qquad
 p_b=0,
\]
and
\begin{align}
 g_c(\alpha)&=0,\label{eq:asymgc}\\
 g_a(\alpha)&=(1-\lambda)+\lambda(4\alpha-3),\label{eq:asymga}\\
 g_b(\alpha)&=2(1-\lambda)+\lambda(2-7\alpha)
 =2-7\lambda\alpha.\label{eq:asymgb}
\end{align}
Let
\[
 G(\alpha)=\max\{g_c(\alpha),g_a(\alpha),g_b(\alpha)\},
\]
choose $r(\alpha)\in\argmax_{r\in\{c,a,b\}}g_r(\alpha)$, and set
\[
 \phi(\alpha)=p_{r(\alpha)},
 \qquad
 z(\alpha)\in\B(\alpha).
\]
The three targets are absorbing under this policy.  Indeed,
\begin{align*}
 &g_c(1/2)=0>
 \max\left\{1-2\lambda,2-\frac72\lambda\right\},\\
 &g_a(1)=1>\max\{0,2-7\lambda\},\\
 &g_b(0)=2>\max\{0,1-4\lambda\}.
\end{align*}
For an arbitrary current worldview $\alpha$, candidate next worldview $\beta$, and $y=z(\beta)$,
\[
 (1-\lambda)U(\beta,y)+\lambda U(\alpha,y)
 \leq g_y(\alpha)
 \leq G(\alpha),
\]
because $U(\beta,a)\leq1$, $U(\beta,b)\leq2$, and $U(\beta,c)=0$.  From the following period onward, the policy remains at one of the three absorbing targets, and every later one-period assessment is also at most $G(\alpha)$.  Consequently,
\[
 C_{\phi,z}(\beta;\alpha)
 \leq\frac{G(\alpha)}{1-\delta}.
\]
Choosing $\beta=p_{r(\alpha)}$ produces the constant flow $G(\alpha)$ and attains this upper bound.  Hence $(\phi,z)$ is a stationary MPE.
\end{proof}

\begin{proposition}[Perturbations preserving the common value]\label{prop:common-value-perturbations}
Fix $\lambda\in(4/7,1)$ and put
\[
 \eta=\min\{1,2\lambda-1,7\lambda/2-2\}>0,
 \qquad \varepsilon=\min\{1/8,\eta/16\}.
\]
Consider the payoff array and initial worldview
\[
\begin{array}{c|ccc}
 &c&a&b\\ \hline
 u_1&0&A_1&B_1\\
 u_2&0&A_2&B_2
\end{array},\qquad \alpha_0=s,
\]
where
\begin{equation}\label{eq:perturbball}
 \max\{|A_1-1|,|A_2+3|,|B_1+5|,|B_2-2|,|s-1/2|\}
 <\varepsilon.
\end{equation}
For every $\delta\in(0,1)$ a stationary MPE exists.  In every stationary MPE starting from $s$,
\begin{equation}\label{eq:perturbedpersistence}
 x_t=c,\qquad \ell\leq\alpha_t\leq r\quad(t\geq0),
 \qquad
 \ell=\frac{B_2}{B_2-B_1},\quad
 r=\frac{-A_2}{A_1-A_2},
\end{equation}
with $0<\ell<s<r<1$.  The distance from each future worldview to $\{0,1\}$ is at least $\min\{\ell,1-r\}>0$.  The pure-worldview maximal utilities remain strictly ordered, with $B_2>A_1$.
\end{proposition}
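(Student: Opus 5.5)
The plan follows the proof of \cref{prop:asymmetric}, with every strict inequality there carried through with explicit slack.

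\emph{Static geometry.} In the scalar parametrization,
\[
 U(\alpha,a)=A_2+(A_1-A_2)\alpha,\qquad
 U(\alpha,b)=B_2+(B_1-B_2)\alpha,\qquad
 U(\alpha,c)=0.
\]
By \eqref{eq:perturbball}, $A_1>0>A_2$ and $B_2>0>B_1$. Hence $U(\cdot,a)$ is increasing with zero at $r$, and $U(\cdot,b)$ is decreasing with zero at $\ell$. It follows that $c\in\B(\alpha)$ if and only if $\ell\leq\alpha\leq r$.

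The unperturbed values are $\ell=2/7$ and $r=3/4$. With $\varepsilon\leq1/8$, crude bounds on the two ratios should give $\ell<3/8<s<5/8<r$. For example, $\ell=B_2/(B_2-B_1)\leq(2+\varepsilon)/(7-2\varepsilon)<3/8$, and a similar bound handles $r$. In particular $c$ is uniquely optimal at $s$ and $h(s)=0$. The claims $B_2>A_1$ and the distance bound $\min\{\ell,1-r\}$ are then immediate.

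\emph{Persistence.} Fix any stationary MPE starting at $s$. Bound the one-period assessment $q_s(\beta,y)$ of every statically implementable pair:
\begin{itemize}
\item For $y=c$ the assessment is $0$.
\item For $y=a$, use $U(\beta,a)\leq A_1$. The assessment is then at most $(1-\lambda)A_1+\lambda U(s,a)$.
\item For $y=b$, use $U(\beta,b)\leq B_2$. The assessment is then at most $(1-\lambda)B_2+\lambda U(s,b)$.
\end{itemize}
The main bookkeeping step is to compare these with their unperturbed values $1-2\lambda\leq-\eta$ and $2-\tfrac72\lambda\leq-\eta$.

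The errors come from $|A_1-1|$ and $|B_2-2|$, each below $\varepsilon$, and from $U(s,a)$ and $U(s,b)$. Write $U(s,a)-U(1/2,a)=(A_1-A_2)(s-\tfrac12)$, and note that $U(1/2,a)$ moves by at most $\varepsilon$ when the entries move. Since $|A_1-A_2|,|B_1-B_2|\leq 8$, the deviation of $U(s,\cdot)$ from its unperturbed value at $1/2$ is at most roughly $(1+8)\varepsilon$. The total error is therefore below a small multiple of $\varepsilon$, which I expect to be at most about $10\varepsilon<\eta$ given $\varepsilon\leq\eta/16$. This makes both noncompromise assessments strictly negative.

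It follows that every continuation $C_{\phi,z}(\beta;s)\leq0$. Meanwhile \cref{lem:postponement} gives $S(s)\geq h(s)/(1-\delta)=0$. Hence the equilibrium continuation equals zero, every future assessment is zero, and every future action is $c$. By the static geometry, $\alpha_t\in[\ell,r]$ for all $t\geq1$; at $t=0$ this holds because $s\in(\ell,r)$ and $x_0=c$.

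\emph{Existence.} Use the destination construction of \cref{prop:globalMPEcounter}, with targets $p_c=s$, $p_a=1$, $p_b=0$ and scores
\[
 g_c=0,\qquad
 g_a(\alpha)=(1-\lambda)A_1+\lambda U(\alpha,a),\qquad
 g_b(\alpha)=(1-\lambda)B_2+\lambda U(\alpha,b).
\]
Set $\phi(\alpha)=p_{r(\alpha)}$ for a maximizing score index $r(\alpha)$, and let $z(\alpha)$ be any static-optimal action.

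Each target must be absorbing under this policy:
\begin{itemize}
\item At $s$, $g_a(s),g_b(s)<0=g_c(s)$ by the persistence step.
\item At $1$, $g_a(1)=A_1>0$, and $g_b(1)=(1-\lambda)B_2+\lambda B_1\approx2-7\lambda<-2$, so $g_a(1)>\max\{g_c(1),g_b(1)\}$.
\item At $0$, $g_b(0)=B_2$ exceeds both $0$ and $g_a(0)=(1-\lambda)A_1+\lambda A_2\approx1-4\lambda<0$.
\end{itemize}
Next, for any deviation $\beta$, the first flow is at most $g_{z(\beta)}(\alpha)\leq G(\alpha)$, again because $U(\beta,a)\leq A_1$ and $U(\beta,b)\leq B_2$. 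Every later flow is a target score, also at most $G(\alpha)$. Thus every continuation is at most $G(\alpha)/(1-\delta)$, and the policy attains this bound. This verifies \eqref{eq:MPEworldview}.

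The only real obstacle is the uniform $\varepsilon$-bookkeeping, which must keep every strict inequality valid simultaneously with the stated constants.
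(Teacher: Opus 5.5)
Your proposal is correct and follows essentially the same route as the paper: bound the assessments of $a$ and $b$ at $s$ by the perturbed scores $g_a(s),g_b(s)$, which stay negative because the perturbation error (your $9\varepsilon$, the paper's $5\varepsilon$ and $8\varepsilon$) is below $\eta$. You then use postponement to force a zero continuation and hence $x_t=c$, and get existence from the absorbing-destination construction with targets $s,1,0$. The differences are cosmetic: you locate $\ell<s<r$ by direct ratio bounds rather than from $U(s,a),U(s,b)<0$, and the ``$\approx$'' gaps at the targets $1$ and $0$ (together with $z(s)=c$, $z(1)=a$, $z(0)=b$, which you need for the constant flow $G(\alpha)$) are routine to make exact, since each of those scores moves by less than $\varepsilon$.
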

\begin{proof}
The perturbation bound gives $s\in(0,1)$, $A_1,B_2>0$, and $A_2,B_1<0$.  Therefore
\[
 \max_x u_1(x)=A_1,\qquad \max_x u_2(x)=B_2,
 \qquad B_2-A_1>1-2\varepsilon\geq\frac34>0.
\]
Define
\begin{align*}
 g_a(\alpha)&=(1-\lambda)A_1+\lambda[\alpha A_1+(1-\alpha)A_2],\\
 g_b(\alpha)&=(1-\lambda)B_2+\lambda[\alpha B_1+(1-\alpha)B_2],
 \qquad g_c(\alpha)=0.
\end{align*}
For the unperturbed values, the four gaps
\[
 -g_a(s),\quad -g_b(s),\quad A_1-g_b(1),\quad B_2-g_a(0)
\]
are respectively $2\lambda-1$, $7\lambda/2-2$, $7\lambda-1$, and $1+4\lambda$, each at least $\eta$.  Under \eqref{eq:perturbball}, the changes in these four gaps have absolute values at most $5\varepsilon$, $8\varepsilon$, $2\varepsilon$, and $2\varepsilon$.  To see the bounds at $s$, write, for example,
\begin{align*}
 |sA_1+(1-s)A_2-(-1)|
 &\leq s|A_1-1|+(1-s)|A_2+3|+4|s-1/2|
 <5\varepsilon,\\
 |sB_1+(1-s)B_2-(-3/2)|
 &\leq s|B_1+5|+(1-s)|B_2-2|+7|s-1/2|
 <8\varepsilon.
\end{align*}
Consequently,
\begin{align*}
 |g_a(s)-(1-2\lambda)|
 &<(1-\lambda)\varepsilon+5\lambda\varepsilon\leq5\varepsilon,\\
 |g_b(s)-(2-7\lambda/2)|
 &<(1-\lambda)\varepsilon+8\lambda\varepsilon\leq8\varepsilon.
\end{align*}
For the remaining two gaps,
\begin{align*}
 |[A_1-g_b(1)]-(7\lambda-1)|
 &\leq |A_1-1|+(1-\lambda)|B_2-2|+\lambda|B_1+5|
 <2\varepsilon,\\
 |[B_2-g_a(0)]-(1+4\lambda)|
 &\leq |B_2-2|+(1-\lambda)|A_1-1|+\lambda|A_2+3|
 <2\varepsilon.
\end{align*}  Since $8\varepsilon\leq\eta/2$, all four gaps remain strictly positive:
\begin{equation}\label{eq:perturbstablegaps}
 g_a(s)<0,\qquad g_b(s)<0,\qquad
 A_1>g_b(1),\qquad B_2>g_a(0).
\end{equation}
Also $A_1>A_2$ and $B_2>B_1$.  Thus the assessment at $s$ of any future action $a$ or $b$ is at most $g_a(s)$ or $g_b(s)$, respectively, whereas the assessment of $c$ is zero.  The inequalities $g_a(s),g_b(s)<0$, together with $A_1,B_2>0$, imply $U(s,a),U(s,b)<0$ and hence $h(s)=0$.  In an arbitrary stationary MPE, let $C=C_{\phi,z}(\phi(s);s)$.  Then
\[
 C\leq0,\qquad C\geq h(s)+\delta C=\delta C,
 \qquad\text{so }C=0.
\]
Every future action must therefore be $c$.  Since $A_1-A_2>0$ and $B_2-B_1>0$, static optimality gives
\begin{equation}\label{eq:perturbcell}
 \begin{split}
 c\in\B(\alpha)
 &\iff \alpha(A_1-A_2)+A_2\leq0
       \ \text{and}\ B_2-\alpha(B_2-B_1)\leq0\\
 &\iff \ell\leq\alpha\leq r.
 \end{split}
\end{equation}
The signs of $A_1,A_2,B_1,B_2$ imply $0<\ell,r<1$.  The strict inequalities $U(s,a),U(s,b)<0$ give $\ell<s<r$.  Consequently $\dist(\alpha_t,\{0,1\})\geq\min\{\ell,1-r\}>0$ for all $t\geq0$.

For existence, take destinations $p_c=s$, $p_a=1$, $p_b=0$, and let $\phi(\alpha)$ select $p_y$ for any $y$ maximizing $g_y(\alpha)$.  Choose $z(\alpha)\in z^*(\alpha)$.  Each destination is absorbing by \eqref{eq:perturbstablegaps}, since $g_a(1)=A_1>0$ and $g_b(0)=B_2>0$.  The current action at each destination is respectively $c,a,b$.  For every successor $\beta$ and every date along its continuation,
\[
 (1-\lambda)U(\phi^k(\beta),z(\phi^k(\beta)))
 +\lambda U(\alpha,z(\phi^k(\beta)))
 \leq G(\alpha):=\max\{0,g_a(\alpha),g_b(\alpha)\}.
\]
Thus $C_{\phi,z}(\beta;\alpha)\leq G(\alpha)/(1-\delta)$, with equality when $\beta=\phi(\alpha)$ is the selected absorbing destination.  This verifies the MPE conditions at every state.
\end{proof}

\begin{remark}[The additional action]\label{rem:two-action-benchmark}
Each pure worldview in \eqref{eq:countermatrix} has a distinct, uniquely optimal action.  The additional action $c$ is optimal at mixed worldviews but not at either pure worldview.  This is the menu feature that distinguishes the example from the two-action cases analyzed in \citet[Section~II.A, pp.~727--731]{BBMZ2021}.
\end{remark}

\section{A sufficient condition for persistent mixed worldviews}\label{sec:persistence}

The upper bound \eqref{eq:flowceiling} applies to every statically implementable future outcome. Repeating the current worldview gives the reverse bound on the equilibrium continuation value.

\begin{proof}[Proof of \cref{thm:generaltrap}]
Let
\[
 \alpha_1=\phi(\bar\alpha),
 \qquad
 \alpha_{t+1}=\phi(\alpha_t),
 \qquad
 x_t=z(\alpha_t).
\]
Set
\begin{equation}\label{eq:Cgeneraltrap}
 C=C_{\phi,z}(\alpha_1;\bar\alpha)
 =\sum_{t=1}^{\infty}\delta^{t-1}
 q_{\bar\alpha}(\alpha_t,x_t).
\end{equation}
The upper-bound condition gives
\begin{equation}\label{eq:Cgeneralupper}
 C\leq\frac{h(\bar\alpha)}{1-\delta}.
\end{equation}
The current self can choose $\bar\alpha$ once more.  The next-period action is $z(\bar\alpha)\in\B(\bar\alpha)$, so its assessment is
\[
 (1-\lambda)h(\bar\alpha)+\lambda h(\bar\alpha)
 =h(\bar\alpha).
\]
At the following date the stationary policy chooses $\phi(\bar\alpha)=\alpha_1$, after which the original continuation is reproduced.  The deviation payoff is
\[
 h(\bar\alpha)+\delta C.
\]
Equilibrium optimality implies
\[
 C\geq h(\bar\alpha)+\delta C,
\]
and hence
\begin{equation}\label{eq:Cgenerallower}
 C\geq\frac{h(\bar\alpha)}{1-\delta}.
\end{equation}
Combining \eqref{eq:Cgeneralupper} and \eqref{eq:Cgenerallower},
\[
 0=\frac{h(\bar\alpha)}{1-\delta}-C
   =\sum_{t=1}^{\infty}\delta^{t-1}
        \bigl[h(\bar\alpha)-q_{\bar\alpha}(\alpha_t,x_t)\bigr].
\]
Every bracket is nonnegative and every coefficient is positive.  Thus each bracket is zero, which proves \eqref{eq:trapconclusionpair} and \eqref{eq:trapconclusionstate}.

For each $x\in X$, define
\[
 C_x=\{\beta\in\DeltaJ: U(\beta,x)\geq U(\beta,y)
                         \text{ for every }y\in X\}.
\]
Each $C_x$ is closed, being a finite intersection of closed half-spaces relative to $\DeltaJ$.  Hence
\[
 K(\bar\alpha)
 =\bigcup_{x\in X}\bigl(C_x\cap
       \{\beta:q_{\bar\alpha}(\beta,x)=h(\bar\alpha)\}\bigr)
\]
is closed in the compact simplex and therefore compact.  It is nonempty: any $x\in\B(\bar\alpha)$ gives $q_{\bar\alpha}(\bar\alpha,x)=h(\bar\alpha)$, so $\bar\alpha\in K(\bar\alpha)$.  If it is disjoint from the finite pure-state set, then
\[
 \varepsilon:=\min_{\beta\in K(\bar\alpha)}\min_{j\in J}
       \|\beta-\pure{j}\|_1
       =\dist\bigl(K(\bar\alpha),\pures\bigr)>0.
\]
Equation \eqref{eq:trapconclusionstate} gives
\[
 \dist\bigl(\alpha_t,\pures\bigr)\geq\varepsilon
\]
for every $t\geq1$.
\end{proof}

A sufficient condition for persistence with a common-value action can be checked by finitely many strict comparisons.

\begin{corollary}[Common-value compromise]\label{cor:commoncompromise}
Suppose there is an action $c\in X$ and a number $\bar c\in\R$ such that
\begin{equation}\label{eq:commonvalue}
 u_j(c)=\bar c
 \qquad\text{for every }j\in J.
\end{equation}
Let $\bar\alpha\in\DeltaJ$ satisfy
\begin{equation}\label{eq:coptimalbar}
 c\in\B(\bar\alpha),
 \qquad
 h(\bar\alpha)=\bar c.
\end{equation}
For $x\in X$, write
\[
 M(x)=\max_{j\in J}u_j(x).
\]
Assume that, for every implementable $x\neq c$,
\begin{equation}\label{eq:strictcompromiseineq}
 (1-\lambda)M(x)+\lambda U(\bar\alpha,x)
 <\bar c.
\end{equation}
Then, in every stationary MPE starting at $\bar\alpha$,
\begin{equation}\label{eq:compromiseallfuture}
 x_t=c
 \quad\text{and}\quad
 \alpha_t\in\{\alpha:c\in\B(\alpha)\}
 \qquad\text{for every }t\geq0.
\end{equation}
If $c$ is not optimal under any pure worldview, no stationary MPE starting at $\bar\alpha$ approaches the set of pure worldviews.
\end{corollary}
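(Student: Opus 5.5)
The plan is to reduce the corollary to \cref{thm:generaltrap} by verifying the upper-bound condition \eqref{eq:flowceiling} at $\bar\alpha$, and then to read off the conclusions from the equality set $K(\bar\alpha)$.

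\textbf{Step 1: the upper bound.} Fix $\beta\in\DeltaJ$ and $x\in\B(\beta)$. If $x=c$, then $U(\beta,c)=U(\bar\alpha,c)=\bar c$ by \eqref{eq:commonvalue}. Hence $q_{\bar\alpha}(\beta,c)=\bar c=h(\bar\alpha)$ by \eqref{eq:coptimalbar}. If $x\neq c$, then $x$ is implementable because $x\in\B(\beta)$. Since $U(\beta,x)$ is a convex combination of the $u_j(x)$, we have $U(\beta,x)\leq M(x)$. Then \eqref{eq:strictcompromiseineq} gives
\[
 q_{\bar\alpha}(\beta,x)\leq(1-\lambda)M(x)+\lambda U(\bar\alpha,x)<\bar c=h(\bar\alpha).
\]
So \eqref{eq:flowceiling} holds, with strict inequality for every implementable pair whose action differs from $c$.

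\textbf{Step 2: invoke the theorem.} By \cref{thm:generaltrap}, \eqref{eq:trapconclusionpair} holds for every $t\geq1$, that is, $q_{\bar\alpha}(\alpha_t,x_t)=h(\bar\alpha)$. The pair $(\alpha_t,x_t)$ is statically implementable because $x_t=z(\alpha_t)\in\B(\alpha_t)$. By the strict inequality in Step 1, this forces $x_t=c$, and hence $c\in\B(\alpha_t)$. At $t=0$ we have $\alpha_0=\bar\alpha$ and $c\in\B(\bar\alpha)$. The equilibrium action $x_0=z(\bar\alpha)$, however, need only lie in $\B(\bar\alpha)$. If another action ties with $c$ at $\bar\alpha$, this is the one delicate point. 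I expect to handle it by reading the statement's ``$x_t=c$'' at $t=0$ as requiring $\B(\bar\alpha)=\{c\}$, or else by noting that for $t\geq0$ the state claim holds regardless and the action claim is exact for $t\geq1$. I would check this against the intended reading, most likely stating the result with the convention that $z(\bar\alpha)=c$ is selected, or arguing uniqueness when it is available.

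\textbf{Step 3: separation from the pure worldviews.} Set $C_c=\{\alpha:c\in\B(\alpha)\}$. This set is closed, since it is a finite intersection of closed half-spaces in the simplex. Suppose $c$ is not optimal at any pure worldview. Then $C_c\cap\pures=\varnothing$, so $\varepsilon=\dist(C_c,\pures)>0$ by compactness. All $\alpha_t$ lie in $C_c$, so $\dist(\alpha_t,\pures)\geq\varepsilon$ for every $t$, and no path approaches purity.

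The main obstacle is the $t=0$ action statement when $\bar\alpha$ has ties. Everything else is a direct check of \eqref{eq:flowceiling}.
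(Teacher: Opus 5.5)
Steps~1 and~3 match the paper's proof. Step~2, however, leaves the $t=0$ action claim unproved. The statement asserts $x_0=c$ in every stationary MPE. Reading it as presupposing $\B(\bar\alpha)=\{c\}$, or imposing a tie-breaking convention $z(\bar\alpha)=c$, would change the result rather than prove it.

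No such convention is needed, because the hypothesis \eqref{eq:strictcompromiseineq} already rules out ties at $\bar\alpha$. Your own Step~1 settles this once you apply it with $\beta=\bar\alpha$. For any $x\in\B(\bar\alpha)$, the pair $(\bar\alpha,x)$ is statically implementable and
\[
 q_{\bar\alpha}(\bar\alpha,x)=(1-\lambda)U(\bar\alpha,x)+\lambda U(\bar\alpha,x)=h(\bar\alpha).
\]
Step~1, by contrast, shows $q_{\bar\alpha}(\beta,x)<h(\bar\alpha)$ for every implementable pair with $x\neq c$. Hence $\B(\bar\alpha)=\{c\}$, and \eqref{eq:MPEaction} forces $x_0=z(\bar\alpha)=c$.

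The paper argues this directly. If some $x\neq c$ were optimal at $\bar\alpha$, then $x$ is implementable and $U(\bar\alpha,x)=\bar c$. Since $M(x)\geq U(\bar\alpha,x)$, this gives $(1-\lambda)M(x)+\lambda U(\bar\alpha,x)\geq\bar c$, contradicting \eqref{eq:strictcompromiseineq}.

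With this fix your proof is complete. Your direct use of the closed set $\{\alpha:c\in\B(\alpha)\}$ in Step~3 is equivalent to the paper's route. The paper notes that, because equality in the upper bound occurs exactly for action $c$, this set coincides with $K(\bar\alpha)$, and then invokes the distance part of \cref{thm:generaltrap}.
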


\begin{proof}
If $x=c$, \eqref{eq:commonvalue} gives
\[
 q_{\bar\alpha}(\beta,c)=\bar c
\]
for every $\beta$ at which $c$ is optimal.  If $x\neq c$, then
\[
 U(\beta,x)\leq M(x),
\]
so \eqref{eq:strictcompromiseineq} yields
\[
 q_{\bar\alpha}(\beta,x)
 \leq (1-\lambda)M(x)+\lambda U(\bar\alpha,x)
 <\bar c.
\]
The upper-bound condition holds, and equality is possible only for action $c$.  By \cref{thm:generaltrap}, every future action is $c$.  At $t=0$ the choice is also uniquely $c$: if $x\neq c$ were optimal at $\bar\alpha$, then
\[
 (1-\lambda)M(x)+\lambda U(\bar\alpha,x)
 \geq U(\bar\alpha,x)=\bar c,
\]
contrary to \eqref{eq:strictcompromiseineq}.  The equality set is exactly the closed action region $C_c=\{\alpha:c\in\B(\alpha)\}$.  If it contains no pure worldview, the last assertion follows from \cref{thm:generaltrap}.
\end{proof}

\begin{remark}[Threshold form]\label{rem:thresholdcompromise}
Suppose, for every $x\in\Ximp\setminus\{c\}$,
\[
 U(\bar\alpha,x)<\bar c<M(x).
\]
For each such $x$, rearrangement gives
\begin{align*}
 (1-\lambda)M(x)+\lambda U(\bar\alpha,x)<\bar c
 &\iff M(x)-\bar c<\lambda[M(x)-U(\bar\alpha,x)].
\end{align*}
The denominator is positive, so \eqref{eq:strictcompromiseineq} is equivalent to
\begin{equation}\label{eq:lambdacompthreshold}
 \lambda>
 \frac{M(x)-\bar c}{M(x)-U(\bar\alpha,x)}.
\end{equation}
The right-hand side lies strictly between zero and one.  For sufficiently high $\lambda$, every stationary MPE from $\bar\alpha$ therefore sustains the compromise action.  In \eqref{eq:countermatrix}, $M(a)=M(b)=1$, $U(1/2,a)=U(1/2,b)=-1$, and $\bar c=0$, so \eqref{eq:lambdacompthreshold} becomes $\lambda>1/2$.
\end{remark}

\subsection{A finite characterization of the upper-bound condition}\label{sec:ceilinggeometry}

The condition in Theorem~\ref{thm:generaltrap} quantifies over a continuum of future worldviews, but it can be checked without computing an equilibrium.  For each $x\in X^I$, let
\begin{equation}\label{eq:implementedmaximum}
 C_x=\{\beta\in\Delta^n:x\in z^*(\beta)\},\qquad
 m_x=\max_{\beta\in C_x}U(\beta,x),\qquad
 F_x=\argmax_{\beta\in C_x}U(\beta,x).
\end{equation}
Each $C_x$ is a nonempty compact polytope.  Thus $m_x$ is the value of a linear program and $F_x$ is a nonempty exposed face of $C_x$, possibly the whole polytope.

\begin{proposition}[Linear-program characterization]\label{prop:ceilingLP}
For a given $\bar\alpha$, condition~\eqref{eq:flowceiling} holds if and only if
\begin{equation}\label{eq:finiteceiling}
 (1-\lambda)m_x+\lambda U(\bar\alpha,x)\leq h(\bar\alpha)
 \qquad\text{for every }x\in X^I.
\end{equation}
When it holds, define
\[
 I(\bar\alpha)=\{x\in X^I:(1-\lambda)m_x+\lambda U(\bar\alpha,x)=h(\bar\alpha)\}.
\]
Then
\begin{equation}\label{eq:Kfaces}
 K(\bar\alpha)=\bigcup_{x\in I(\bar\alpha)}F_x.
\end{equation}
In particular, both the upper-bound condition and exclusion of all pure worldviews from its equality set have finite descriptions in the primitive payoffs.
\end{proposition}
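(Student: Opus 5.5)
The plan is to reduce the continuum condition \eqref{eq:flowceiling} to the finite one by fixing the action first and then maximizing over states. Write $X^I$ for the implementable actions. By definition, $x\in z^*(\beta)$ means exactly $\beta\in C_x$, and $C_x$ is nonempty for such $x$. Actions outside $X^I$ impose no constraint in \eqref{eq:flowceiling}, since no $\beta$ makes them optimal. So \eqref{eq:flowceiling} is equivalent to
\[
 \sup_{\beta\in C_x}\bigl[(1-\lambda)U(\beta,x)+\lambda U(\bar\alpha,x)\bigr]\leq h(\bar\alpha)
 \qquad\text{for every }x\in X^I.
\]
In this expression the term $\lambda U(\bar\alpha,x)$ does not depend on $\beta$, and $1-\lambda>0$. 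Hence the supremum equals $(1-\lambda)m_x+\lambda U(\bar\alpha,x)$. It is attained because $C_x$ is a nonempty compact polytope, being the intersection of $\Delta^n$ with the closed half-spaces $U(\beta,x)\geq U(\beta,y)$, and $U(\cdot,x)$ is linear. This proves the equivalence with \eqref{eq:finiteceiling}. I would also note briefly that $F_x$ is a nonempty exposed face, since it is the argmax of a linear functional over a polytope.

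For \eqref{eq:Kfaces}, assume the condition holds. By definition, $\beta\in K(\bar\alpha)$ if and only if there is $x$ with $\beta\in C_x$ and $(1-\lambda)U(\beta,x)+\lambda U(\bar\alpha,x)=h(\bar\alpha)$.

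\emph{Inclusion $\supseteq$.} Take $x\in I(\bar\alpha)$ and $\beta\in F_x$. Then $\beta\in C_x$ and $U(\beta,x)=m_x$, so the assessment equals $(1-\lambda)m_x+\lambda U(\bar\alpha,x)=h(\bar\alpha)$. Hence $\beta\in K(\bar\alpha)$.

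\emph{Inclusion $\subseteq$.} Let $\beta\in K(\bar\alpha)$ with witness $x\in z^*(\beta)$, so $x\in X^I$. The chain
\[
 h(\bar\alpha)=(1-\lambda)U(\beta,x)+\lambda U(\bar\alpha,x)\leq(1-\lambda)m_x+\lambda U(\bar\alpha,x)\leq h(\bar\alpha)
\]
forces equality throughout. Dividing by $1-\lambda>0$ gives $U(\beta,x)=m_x$, so $\beta\in F_x$. Equality in the last step gives $x\in I(\bar\alpha)$.

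The finite-description claim then follows. Each $m_x$ is the value of a linear program in the payoffs. Each $F_x$ is a polytope given by finitely many linear equalities and inequalities. Exclusion of the pure worldviews reduces to checking, for each $j$ and each $x\in I(\bar\alpha)$, whether $\pure{j}\in F_x$; this holds if and only if $x\in z^*(\pure{j})$ and $u_j(x)=m_x$, which is a finite check.

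I do not expect a genuine obstacle. The one point needing care is the strict positivity of $1-\lambda$, which is used both to pull the maximum through and to recover $U(\beta,x)=m_x$ from equality of the assessments. The other is remembering that only implementable actions enter, so that every $C_x$ that appears is nonempty and every $m_x$ is finite.
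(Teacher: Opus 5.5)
Your proposal is correct and takes essentially the same route as the paper: fix the action, use $1-\lambda>0$ to reduce the maximization over $C_x$ to the linear program for $m_x$, and identify $K(\bar\alpha)$ through the identity $h(\bar\alpha)-q_{\bar\alpha}(\beta,x)=(1-\lambda)[m_x-U(\beta,x)]$ for $x\in I(\bar\alpha)$. Your two-inclusion argument for \eqref{eq:Kfaces} and the vertex test ($x\in z^*(\pure{j})$ and $u_j(x)=m_x$) match the paper's proof.
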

\begin{proof}
For fixed $x$, maximizing $q_{\bar\alpha}(\beta,x)$ over $\beta\in C_x$ gives $(1-\lambda)m_x+\lambda U(\bar\alpha,x)$, since $1-\lambda>0$.  Taking the maximum over the finite set $X^I$ proves the equivalence.  If $x\notin I(\bar\alpha)$, its assessment is strictly below $h(\bar\alpha)$ for every $\beta\in C_x$.  If $x\in I(\bar\alpha)$, then
\[
 h(\bar\alpha)-q_{\bar\alpha}(\beta,x)
 =(1-\lambda)[m_x-U(\beta,x)],
\]
which is zero exactly on $F_x$.  This proves~\eqref{eq:Kfaces}.  The programs defining $m_x$ have constraints
\[
 \beta^j\geq0,\qquad\sum_j\beta^j=1,\qquad
 \sum_j\beta^j[u_j(x)-u_j(y)]\geq0\quad(y\in X).
\]
A pure worldview belongs to $F_x$ exactly when it belongs to $C_x$ and gives payoff $m_x$ to $x$.  These are finitely many comparisons, one for each pair $(j,x)$.
\end{proof}

The distinction between $m_x$ and $M(x)=\max_j u_j(x)$ matters.  The latter may be attained only at a worldview that would choose another action.  The linear program restricts the maximization to worldviews that can actually implement $x$, so~\eqref{eq:finiteceiling} is exact, whereas substituting $M(x)$ gives a sufficient bound.

\begin{proposition}[Payoff equalities required by the upper bound]\label{prop:ceilingequalities}
Suppose $0<\lambda<1$ and $\bar\alpha$ satisfies~\eqref{eq:flowceiling}.  For every $x\in z^*(\bar\alpha)$ and any $i,j$ with $\bar\alpha^i>0$ and $\bar\alpha^j>0$,
\begin{equation}\label{eq:facepayoffequality}
 u_i(x)=u_j(x).
\end{equation}
Consequently, if $\bar\alpha$ has full support, every action optimal there has a worldview-independent payoff.  If the values $u_1(x),\ldots,u_n(x)$ are pairwise distinct for every action $x$, no mixed worldview satisfies~\eqref{eq:flowceiling}.
\end{proposition}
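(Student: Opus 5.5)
The plan is to test the upper-bound condition~\eqref{eq:flowceiling} at statically implementable pairs built by moving from $\bar\alpha$ toward a pure vertex in its support. Fix $x\in z^*(\bar\alpha)$ and an index $i$ with $\bar\alpha^i>0$. The first step is to evaluate~\eqref{eq:flowceiling} at $(\beta,x)=(\bar\alpha,x)$: this gives $q_{\bar\alpha}(\bar\alpha,x)=h(\bar\alpha)$, so equality holds there. Next I will perturb $\beta$ within the face of the simplex spanned by the support of $\bar\alpha$ and keep $x$ statically optimal. The key fact is that the region $C_x=\{\beta:x\in z^*(\beta)\}$ need not contain a neighborhood of $\bar\alpha$ when $x$ ties with other actions. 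So instead of perturbing $\beta$ with $x$ fixed, I will perturb $\beta$ and let the action change.

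The cleaner route is a convexity argument on the composite function $\beta\mapsto\max_{y\in z^*(\beta)}q_{\bar\alpha}(\beta,y)$. First, for $\beta$ near $\bar\alpha$, we have $z^*(\beta)\subseteq z^*(\bar\alpha)$ by upper hemicontinuity of the argmax over a finite set. Second, for $y\in z^*(\bar\alpha)$ and $\beta\in C_y$, we have
\[
q_{\bar\alpha}(\beta,y)=(1-\lambda)h(\beta)+\lambda U(\bar\alpha,y)=(1-\lambda)h(\beta)+\lambda h(\bar\alpha).
\]
So condition~\eqref{eq:flowceiling} near $\bar\alpha$ says $h(\beta)\le h(\bar\alpha)$ for all $\beta$ in a neighborhood of $\bar\alpha$; here $1-\lambda>0$ is used. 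Thus $\bar\alpha$ is a local maximum of the convex function $h$ on $\Delta^n$.

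Now take $\beta_\varepsilon=\bar\alpha+\varepsilon(\pure{i}-\pure{j})$ for $\bar\alpha^i,\bar\alpha^j>0$. This point lies in the simplex for small $|\varepsilon|$ of either sign. Along this line, $h$ is convex with a local maximum at $\varepsilon=0$, hence it is constant near $0$. Because $h\ge U(\cdot,x)$ with equality at $\varepsilon=0$, the affine function $\varepsilon\mapsto U(\beta_\varepsilon,x)$ lies below a function that is constant near $0$ and touches it at $0$. Its slope $u_i(x)-u_j(x)$ must therefore vanish, which gives~\eqref{eq:facepayoffequality}.

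The consequences follow directly. Full support makes every pair $i,j$ admissible, so each $x\in z^*(\bar\alpha)$ has a worldview-independent payoff. If payoffs $u_1(x),\dots,u_n(x)$ are pairwise distinct for every $x$, a mixed $\bar\alpha$ has two support indices, and~\eqref{eq:facepayoffequality} then contradicts the distinctness for any $x\in z^*(\bar\alpha)$, which is nonempty. The main obstacle I expect is making the reduction to ``$\bar\alpha$ is a local maximizer of $h$'' rigorous. That requires checking that every $\beta$ near $\bar\alpha$ has some optimal action also optimal at $\bar\alpha$, which follows because strict inequalities $U(\bar\alpha,y)<h(\bar\alpha)$ persist nearby.
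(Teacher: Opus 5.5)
Your argument is correct and is essentially the paper's proof. Both restrict the optimal actions near $\bar\alpha$ to $z^*(\bar\alpha)$ using the strict gap for inactive actions, use $1-\lambda>0$ to turn~\eqref{eq:flowceiling} into a bound on the static value at perturbed states, and perturb in both directions along $\pure{i}-\pure{j}$. The only difference is packaging: the paper computes $h(\bar\alpha+\varepsilon d)=h(\bar\alpha)+\varepsilon\max_{x\in z^*(\bar\alpha)}s_x$ and concludes $\max s_x\le 0\le\min s_x$, whereas you state this as $\bar\alpha$ being a local maximizer of $h$ and read off a zero slope for each $U(\cdot,x)$. Your step showing that $h$ is constant along the line is not needed, since $U(\beta_\varepsilon,x)\le h(\bar\alpha)=U(\bar\alpha,x)$ already gives the zero slope.
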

\begin{proof}
Write $A=z^*(\bar\alpha)$ and take a vector $d$ supported on $\{i:\bar\alpha^i>0\}$ with $\sum_i d^i=0$.  Put $s_x=\sum_i d^i u_i(x)$ for every $x\in X$, and $R=\max_{x\in X}|s_x|$.  Choose $\varepsilon>0$ so that $\varepsilon|d^i|<\bar\alpha^i$ for every nonzero coordinate of $d$.  This makes both $\bar\alpha+\varepsilon d$ and $\bar\alpha-\varepsilon d$ feasible.  If $X\setminus A$ is nonempty, put
\[
 \Delta=\min_{y\notin A}[h(\bar\alpha)-U(\bar\alpha,y)]>0
\]
and additionally choose $2\varepsilon R<\Delta$.  For $x\in A$, $y\notin A$, and either sign,
\[
 U(\bar\alpha\pm\varepsilon d,x)-U(\bar\alpha\pm\varepsilon d,y)
 \geq\Delta-2\varepsilon R>0.
\]
Hence every maximizing action at either perturbed state belongs to $A$.  If $A=X$ this conclusion requires no gap bound.
At $\bar\alpha+\varepsilon d$, an optimal action $y\in A$ maximizes $s_x$ over $A$.  Its assessment from $\bar\alpha$ is
\[
 q_{\bar\alpha}(\bar\alpha+\varepsilon d,y)
 =h(\bar\alpha)+(1-\lambda)\varepsilon\max_{x\in A}s_x.
\]
The upper-bound condition implies $\max_{x\in A}s_x\leq0$.  Applying the same argument to $-d$ gives $\min_{x\in A}s_x\geq0$.  Therefore $s_x=0$ for every $x\in A$.  For distinct $i,j$ with positive weights, take $d=\pure{i}-\pure{j}$. This is a direction in the linear space parallel to the affine hull of the simplex:
\[
 \sum_k d^k=0,\qquad
 \sum_k d^k u_k(x)=u_i(x)-u_j(x).
\]
The equality $s_x=0$ therefore proves~\eqref{eq:facepayoffequality}. A mixed worldview has at least two positive coordinates, which proves the last assertion.
\end{proof}

These equalities follow from the pointwise upper bound. At equilibrium accumulation points, a stronger restriction holds without that bound: supported worldviews must attain the actionwise valuation maximum (\cref{sec:contactgeometry}). The distinction between exact and approximate agreement is developed in \cref{cor:nearcommon}.

\section{Convergence of experienced utility and agreement over future actions}\label{sec:alignment}

The same postponement option restricts equilibrium behavior even when the persistence condition fails. Throughout this section, $h_t,H_t,Q_t,A_t$ refer to the quantities in \eqref{eq:ht}--\eqref{eq:At} along a fixed stationary MPE path.

\begin{proof}[Proof of \cref{thm:alignment}]
Fix $t$.  The date-$t$ self can deviate by choosing the current worldview $\alpha_t$ as the next worldview.  Under this deviation, the date-$(t+1)$ state is again $\alpha_t$, so the date-$(t+1)$ action is $x_t=z(\alpha_t)$ and the first future one-period assessment is $h_t$.  At date $t+1$, stationarity implies that the next self chooses $\phi(\alpha_t)=\alpha_{t+1}$, after which the original equilibrium continuation is reproduced with one-period delay.  Therefore the normalized payoff from the delay deviation is
\begin{equation}\label{eq:delaypayoffalignment}
 (1-\delta)h_t+\delta A_t.
\end{equation}
Equilibrium optimality gives
\[
 A_t\geq(1-\delta)h_t+\delta A_t.
\]
Since $1-\delta>0$, this is equivalent to \eqref{eq:Atgeht}.

Because $x_t$ is statically optimal under $\alpha_t$,
\begin{equation}\label{eq:Qtupper}
 U(\alpha_t,x_{t+k})\leq h_t
 \qquad\text{for every }k\geq1.
\end{equation}
Averaging \eqref{eq:Qtupper} gives
\begin{equation}\label{eq:Qtupperht}
 Q_t\leq h_t.
\end{equation}
Combining \eqref{eq:At}, \eqref{eq:Atgeht}, and \eqref{eq:Qtupperht},
\[
 h_t
 \leq(1-\lambda)H_{t+1}+\lambda Q_t
 \leq(1-\lambda)H_{t+1}+\lambda h_t.
\]
Since $1-\lambda>0$,
\begin{equation}\label{eq:Hnextgeht}
 H_{t+1}\geq h_t.
\end{equation}
From \eqref{eq:Ht},
\begin{equation}\label{eq:Hrecursion}
 H_t=(1-\delta)h_t+\delta H_{t+1}.
\end{equation}
Hence
\begin{equation}\label{eq:Hdiff}
 H_{t+1}-H_t
 =(1-\delta)(H_{t+1}-h_t)
 \geq0,
\end{equation}
which proves \eqref{eq:Hmonotone}.

Let $\underline u=\min_{j,x}u_j(x)$ and $\overline u=\max_{j,x}u_j(x)$.  Convexity of the averaging weights gives
\[
 \underline u\leq h_t\leq\overline u,
 \qquad \underline u\leq H_t\leq\overline u.
\]
Thus $(H_t)$ is bounded.  Therefore
\[
 H_t\longrightarrow L
\]
for some $L$.  Equation \eqref{eq:Hdiff} implies
\[
 H_{t+1}-h_t
 =\frac{H_{t+1}-H_t}{1-\delta}
 \longrightarrow0,
\]
so $h_t\to L$.  The bounds
\[
 h_t\leq A_t
 \leq(1-\lambda)H_{t+1}+\lambda h_t
\]
then imply $A_t\to L$.

Using \eqref{eq:At},
\begin{align}
 \lambda(h_t-Q_t)
 &=\lambda h_t-\lambda Q_t\notag\\
 &=(1-\lambda)H_{t+1}+\lambda h_t-A_t\notag\\
 &\leq(1-\lambda)(H_{t+1}-h_t),
 \label{eq:Qgapbound}
\end{align}
where the inequality uses $A_t\geq h_t$.  The left-hand side is nonnegative by \eqref{eq:Qtupperht}, and the right-hand side converges to zero.  Since $\lambda>0$,
\begin{equation}\label{eq:Qconverges}
 h_t-Q_t\longrightarrow0,
\end{equation}
which gives $Q_t\to L$.

Finally, by \eqref{eq:Qt},
\begin{equation}\label{eq:gapsum}
 h_t-Q_t
 =(1-\delta)\sum_{r=1}^{\infty}\delta^{r-1}
 \left[h_t-U(\alpha_t,x_{t+r})\right].
\end{equation}
Every summand in brackets is nonnegative by static optimality.  For fixed $k\geq1$,
\begin{equation}\label{eq:fixedgapbound}
 0\leq
 (1-\delta)\delta^{k-1}
 \left[h_t-U(\alpha_t,x_{t+k})\right]
 \leq h_t-Q_t.
\end{equation}
Letting $t\to\infty$ and using \eqref{eq:Qconverges} proves \eqref{eq:alignmentlimit}.

For the stronger summability assertion, combine \eqref{eq:fixedgapbound}, \eqref{eq:Qgapbound}, and \eqref{eq:Hdiff}:
\begin{equation}\label{eq:lossvariation}
 0\leq h_t-U(\alpha_t,x_{t+k})
 \leq \frac{1-\lambda}{\lambda(1-\delta)^2\delta^{k-1}}
                 (H_{t+1}-H_t).
\end{equation}
Let $c_k=(1-\lambda)/[\lambda(1-\delta)^2\delta^{k-1}]$.  For every $T\geq0$,
\[
 \sum_{t=0}^{T}\bigl[h_t-U(\alpha_t,x_{t+k})\bigr]
 \leq c_k\sum_{t=0}^{T}(H_{t+1}-H_t)
 =c_k(H_{T+1}-H_0).
\]
The terms on the left are nonnegative.  Taking $T\to\infty$, and using $H_{T+1}\to L$ and $0\leq L-H_0\leq\overline u-\underline u$, proves both inequalities in \eqref{eq:totalgap}.
\end{proof}

\begin{corollary}[Cluster-point compatibility]\label{cor:clustercompat}
Let $\alpha_{t_r}\to\alpha^*$ along a subsequence.  Fix $k\geq1$ and pass, if necessary, to a further subsequence on which
\[
 x_{t_r+k}=x
\]
is constant.  Then
\begin{equation}\label{eq:clustercompat}
 x\in\B(\alpha^*),
 \qquad
 U(\alpha^*,x)=h(\alpha^*)=L.
\end{equation}
\end{corollary}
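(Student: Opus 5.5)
The approach is to combine the limits from \cref{thm:alignment} with continuity of the affine utilities $U(\cdot,x)$ and of $h$. A subsequence along which $x_{t_r+k}$ is constant exists because $X$ is finite, so only the two identities in \eqref{eq:clustercompat} need proof.

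\emph{Step 1: the value $L$.} By \cref{thm:alignment}(iii), $h_t=h(\alpha_t)\to L$. Since $h$ is continuous (a maximum of finitely many affine functions) and $\alpha_{t_r}\to\alpha^*$, we get $h(\alpha^*)=\lim_r h(\alpha_{t_r})=L$.

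\emph{Step 2: the future action attains the maximum at the limit.} By \eqref{eq:alignmentlimit} with the fixed $k$,
\[
 h(\alpha_{t_r})-U(\alpha_{t_r},x_{t_r+k})\longrightarrow0 .
\]
On the chosen subsequence $x_{t_r+k}=x$ is constant, and $U(\cdot,x)$ is affine, hence continuous. Therefore $U(\alpha_{t_r},x)\to U(\alpha^*,x)$, and passing to the limit gives
\[
 U(\alpha^*,x)=h(\alpha^*)=L .
\]
Since $h(\alpha^*)=\max_y U(\alpha^*,y)$, this equality says exactly that $x\in\B(\alpha^*)$. The correspondence $\B$ is upper hemicontinuous, but that property is not needed here: we pass to the limit in the nonnegative loss directly.

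\emph{The main obstacle.} Here $x$ is the action $k$ periods later, so $x$ need not be optimal at the states $\alpha_{t_r}$ themselves. What transfers optimality to the limit is the vanishing loss measured from the current worldview, namely \eqref{eq:alignmentlimit}. That loss is the only nontrivial input, and it is already supplied by \cref{thm:alignment}(iv). The remaining steps are continuity arguments, which work because the policy $\phi$ never has to be evaluated at $\alpha^*$.
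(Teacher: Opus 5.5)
Your proposal is correct and follows the paper's own argument: both take $h_{t_r}\to L$ and the vanishing loss $h_{t_r}-U(\alpha_{t_r},x)\to0$ from \cref{thm:alignment}, and then use continuity of $h$ and of $U(\cdot,x)$ to conclude $U(\alpha^*,x)=h(\alpha^*)=L$. That equality means $x$ attains the maximum at $\alpha^*$, so $x\in\B(\alpha^*)$.
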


\begin{proof}
By \cref{thm:alignment},
\[
 h_{t_r}\to L
 \quad\text{and}\quad
 h_{t_r}-U(\alpha_{t_r},x)\to0.
\]
Continuity gives
\[
 U(\alpha^*,x)=L.
\]
Since $h$ is continuous,
\[
 h(\alpha^*)=\lim_{r\to\infty}h(\alpha_{t_r})=L.
\]
Thus $x$ attains the maximum at $\alpha^*$.
\end{proof}

\begin{corollary}[Optimal actions on periodic paths]\label{cor:periodic}
Suppose a stationary MPE path is periodic with an integer period $p\geq1$.  Let
\[
 \mathcal{A}=\{x_0,\dots,x_{p-1}\}
\]
be the set of actions used on the cycle.  Then
\begin{equation}\label{eq:allactionsoptimalcycle}
 \mathcal{A}\subseteq\B(\alpha_t)
 \qquad\text{for every }t.
\end{equation}
In particular, if $\B(\alpha_t)$ is a singleton at every state on the cycle, then the action is constant along the cycle.
\end{corollary}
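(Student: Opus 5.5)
The idea is to apply \cref{thm:alignment}(iv) along the cycle and then use periodicity. The alignment losses would vanish only in the limit, but on a periodic path they are constant along every residue class, so they must vanish exactly. Fix a stationary MPE whose path satisfies $\alpha_{t+p}=\alpha_t$, and hence $x_{t+p}=x_t$, for every $t$.

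Fix $t\geq0$ and $k\geq1$. The loss $\ell_{t,k}=h_t-U(\alpha_t,x_{t+k})$ is nonnegative by static optimality and is $p$-periodic in $t$. By \eqref{eq:alignmentlimit}, $\ell_{t,k}\to0$ as $t\to\infty$. A periodic sequence converging to zero is identically zero, so $\ell_{t+mp,k}=\ell_{t,k}$ for all $m$ forces $\ell_{t,k}=0$. Alternatively, the summability bound \eqref{eq:totalgap} gives the same conclusion, because a nonzero periodic nonnegative sequence has a divergent sum.

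Hence $U(\alpha_t,x_{t+k})=h(\alpha_t)$, that is, $x_{t+k}\in\B(\alpha_t)$, for every $k\geq1$. As $k$ ranges over $1,\dots,p$, the action $x_{t+k}$ ranges over all of $\mathcal A$. This uses periodicity again: the index $k=p$ returns $x_t$ itself, and $x_t\in\B(\alpha_t)$ holds trivially. This proves \eqref{eq:allactionsoptimalcycle}.

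For the final assertion, suppose $\B(\alpha_t)=\{x_t\}$ at each state on the cycle. Then $\mathcal A\subseteq\{x_t\}$, so every action on the cycle equals $x_t$ and the action is constant.

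There is no real obstacle here. The only point needing care is to note that the vanishing holds at every $t$, not just asymptotically, and this follows from the periodic structure.
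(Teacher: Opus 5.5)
Your proof is correct and follows essentially the same route as the paper's. The paper also writes each loss $h_s-U(\alpha_s,x_{s+k})$, for $k=1,\dots,p$, as independent of $m$ along the residue class $s+mp$, uses \eqref{eq:alignmentlimit} to conclude that it vanishes, and notes that these indices cover every action on the cycle. Your summability variant via \eqref{eq:totalgap} is an equally valid alternative.
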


\begin{proof}
Fix a residue $s\in\{0,\ldots,p-1\}$.  Periodicity and \cref{thm:alignment} give
\[
 h_s=h_{s+mp}\longrightarrow L,
 \qquad\text{hence }h_s=L.
\]
For each $k\in\{1,\ldots,p\}$,
\[
 h_s-U(\alpha_s,x_{s+k})
 =h_{s+mp}-U(\alpha_{s+mp},x_{s+mp+k})
 \longrightarrow0.
\]
The left-hand side is independent of $m$ and therefore equals zero.  The indices $s+1,\ldots,s+p$ cover every action used in a period.  Hence every action in $\mathcal A$ attains $h(\alpha_s)$, for every residue $s$, proving \eqref{eq:allactionsoptimalcycle}.
\end{proof}

\begin{remark}[Actions and worldviews]
A periodic path with more than one action lies entirely in the intersection of those actions' static optimality regions.  If the action is constant on a periodic path, every visited worldview assigns it the same utility $L$; the conclusion does not require the worldviews themselves to coincide.  Static indifference at a mixed worldview is compatible with distinct pure-worldview payoffs, so the restriction in \cref{cor:periodic} differs from \eqref{eq:genericity}.
\end{remark}

\begin{corollary}[Dates with a given evaluation loss]\label{cor:gapbound}
For every integer $k\geq1$ and every $\eta>0$,
\begin{equation}\label{eq:lossdatecount}
 \#\{t\geq0:h_t-U(\alpha_t,x_{t+k})\geq\eta\}
 \leq\frac{(1-\lambda)(L-H_0)}{\eta\lambda(1-\delta)^2\delta^{k-1}}.
\end{equation}
In particular, only finitely many dates have a loss of at least $\eta$.
\end{corollary}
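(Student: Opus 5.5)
This is a Markov-type counting argument applied to the summability bound of \cref{thm:alignment}(iv). Fix $k\geq1$ and $\eta>0$, and write $\ell_t=h_t-U(\alpha_t,x_{t+k})$. By \cref{thm:alignment}(iv), and more precisely by the pointwise bound \eqref{eq:Qtupper}, every $\ell_t$ is nonnegative. By \eqref{eq:totalgap}, the series $\sum_{t\geq0}\ell_t$ converges, with
\[
 \sum_{t=0}^{\infty}\ell_t\leq c_k(L-H_0),
 \qquad c_k=\frac{1-\lambda}{\lambda(1-\delta)^2\delta^{k-1}}.
\]

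Let $E_\eta=\{t\geq0:\ell_t\geq\eta\}$. Take any finite subset $F\subseteq E_\eta$. Nonnegativity of all the terms lets us discard the dates outside $F$, which gives
\[
 \eta\,\#F\leq\sum_{t\in F}\ell_t\leq\sum_{t=0}^{\infty}\ell_t\leq c_k(L-H_0).
\]
So $\#F\leq c_k(L-H_0)/\eta$ for every finite $F\subseteq E_\eta$. This bound is uniform in $F$, so $E_\eta$ itself is finite and has the same bound, which is \eqref{eq:lossdatecount}.

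Working with finite subsets is the one point that needs care. It avoids assuming a priori that $E_\eta$ is finite, and it makes the "in particular" clause follow directly. If $L=H_0$, the bound shows that $E_\eta$ is empty. The quantity $L-H_0$ is nonnegative by the monotonicity \eqref{eq:Hmonotone}, so the right-hand side is well defined. No other step is delicate.
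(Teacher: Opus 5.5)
Your proposal is correct and is essentially the paper's argument. The paper counts the loss dates among $0\leq t\leq T$, which gives an increasing sequence $N_T$, rather than arbitrary finite subsets; both versions bound $\eta$ times the count by the nonnegative partial sums controlled by \eqref{eq:totalgap} and then pass to the limit.
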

\begin{proof}
Let $N_T$ count the dates $0\leq t\leq T$ at which the loss is at least $\eta$.  Nonnegativity of the losses and \eqref{eq:totalgap} give
\[
 \eta N_T\leq\sum_{t=0}^{T}\bigl[h_t-U(\alpha_t,x_{t+k})\bigr]
 \leq\frac{(1-\lambda)(L-H_0)}{\lambda(1-\delta)^2\delta^{k-1}}.
\]
The sequence $N_T$ is increasing.  Taking its limit proves \eqref{eq:lossdatecount}.
\end{proof}

The monotone quantity is the discounted average $H_t$; the realized sequence $h_t$ need only converge.  For any fixed horizon and positive loss threshold, \cref{cor:gapbound} bounds the number of dates at which the future action is worse than the current optimum by at least that threshold.  The bound compares the consumer's successive evaluations along a single equilibrium path.
\section{The continuation value and limiting worldviews}\label{sec:contactgeometry}

The preceding section establishes convergence of experienced utility without imposing a payoff restriction.  To learn where the worldview itself can accumulate, we use an additional implication of equilibrium: the continuation value is the upper envelope of affine functions of the current worldview.  Its regularity does not require continuity of the equilibrium policy.

\subsection{Convexity and a supporting continuation}
For the fixed stationary MPE $(\phi,z)$, write $S(\alpha)=C_{\phi,z}(\phi(\alpha);\alpha)$ as in \eqref{eq:contactdefinition}.  Let $B=\max_{j,x}|u_j(x)|$.

\begin{lemma}[The continuation-value envelope]\label{lem:convexvalue}
The function $S$ is convex and satisfies
\begin{equation}\label{eq:Slipschitz}
 |S(\alpha)-S(\eta)|\leq\frac{\lambda B}{1-\delta}\|\alpha-\eta\|_1.
\end{equation}
For each $\beta$, the affine function
\begin{equation}\label{eq:supportingcontinuation}
 f_\beta(\alpha)=C_{\phi,z}(\phi(\beta);\alpha)
              =S(\beta)+b_\beta\mathbin{\cdot}(\alpha-\beta)
\end{equation}
supports $S$ from below, where
\begin{equation}\label{eq:supportslope}
 b_\beta^j=\lambda\sum_{k=0}^{\infty}\delta^k
              u_j\bigl(z(\phi^{k+1}(\beta))\bigr),\qquad
 \|b_\beta\|_\infty\leq\frac{\lambda B}{1-\delta}.
\end{equation}
In particular, $S(\alpha)\geq f_\beta(\alpha)$ for all $\alpha$, with equality at $\alpha=\beta$.  Moreover,
\begin{equation}\label{eq:Bellmansupportidentity}
 C_{\phi,z}(\beta;\alpha)
 =(1-\lambda)h(\beta)+\lambda U(\alpha,z(\beta))+\delta f_\beta(\alpha).
\end{equation}
\end{lemma}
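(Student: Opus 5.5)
The plan is to observe that $C_{\phi,z}(\beta';\alpha)$ is affine in the current worldview $\alpha$ for every fixed successor $\beta'$. Then $S$, being the equilibrium maximum over successors, is a pointwise supremum of affine functions. We will first expand the definition \eqref{eq:continuationV}. The only dependence on $\alpha$ is through the term $\lambda U(\alpha,z(\phi^k(\beta')))=\lambda\sum_j\alpha^j u_j(z(\phi^k(\beta')))$. Hence
\[
 C_{\phi,z}(\beta';\alpha)=c_{\beta'}+\sum_j\alpha^j\lambda\sum_{k\geq0}\delta^k u_j\bigl(z(\phi^k(\beta'))\bigr).
\]
Here $c_{\beta'}$ does not depend on $\alpha$, and absolute convergence justifies the interchange of sums. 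Taking $\beta'=\phi(\beta)$ gives $f_\beta$. Its slope is $b_\beta$ as in \eqref{eq:supportslope}, since $\phi^k(\phi(\beta))=\phi^{k+1}(\beta)$. The bound $\|b_\beta\|_\infty\le\lambda B/(1-\delta)$ follows by summing the geometric series. At $\alpha=\beta$ we have $f_\beta(\beta)=S(\beta)$ by definition, which gives the displayed form $S(\beta)+b_\beta\cdot(\alpha-\beta)$.

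The support property comes from the equilibrium inequality \eqref{eq:MPEworldview} at the state $\alpha$. Because $\phi(\beta)$ is a feasible choice there,
\[
 S(\alpha)=C_{\phi,z}(\phi(\alpha);\alpha)\ge C_{\phi,z}(\phi(\beta);\alpha)=f_\beta(\alpha).
\]
Equality holds at $\alpha=\beta$. Therefore $S=\sup_\beta f_\beta$, with the supremum attained, and $S$ is convex.

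For the Lipschitz bound we use the supports in both directions. First, $S(\alpha)-S(\eta)\ge f_\eta(\alpha)-S(\eta)=b_\eta\cdot(\alpha-\eta)$. Second, $S(\alpha)-S(\eta)\le S(\alpha)-f_\alpha(\eta)=b_\alpha\cdot(\alpha-\eta)$. Each right-hand side is bounded in absolute value by $\|b\|_\infty\|\alpha-\eta\|_1$ via Hölder's inequality, which yields \eqref{eq:Slipschitz}. No continuity of $\phi$ or $z$ enters anywhere.

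Finally, for \eqref{eq:Bellmansupportidentity} we split off the $k=0$ term of $C_{\phi,z}(\beta;\alpha)$. That term equals $(1-\lambda)U(\beta,z(\beta))+\lambda U(\alpha,z(\beta))$, and $U(\beta,z(\beta))=h(\beta)$ by \eqref{eq:MPEaction}. The remaining tail is reindexed as in the proof of \cref{lem:postponement}: it equals $\delta C_{\phi,z}(\phi(\beta);\alpha)=\delta f_\beta(\alpha)$.

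The main care point is the support inequality. It must be applied with $\phi(\beta)$ as a deviation available at the state $\alpha$, not at $\beta$. One also has to keep the $k=0$ index of $C$ aligned with the slope formula, which begins at $\phi^{k+1}(\beta)$.
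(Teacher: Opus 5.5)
Your proposal is correct and follows the paper's argument: affine dependence of $C_{\phi,z}(\gamma;\alpha)$ on $\alpha$, the equilibrium inequality at $\alpha$ applied to the feasible successor $\phi(\beta)$, and splitting off the $k=0$ term for the Bellman identity. The only minor variation is the Lipschitz bound. You derive it from the two-sided supporting inequalities together with $\|b\|_\infty\leq\lambda B/(1-\delta)$, whereas the paper bounds each $C_{\phi,z}(\gamma;\cdot)$ uniformly and passes to the supremum over $\gamma$; both routes are equally valid.
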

\begin{proof}
For a fixed successor $\gamma$, expand the current-worldview component of the absolutely convergent series:
\[
 C_{\phi,z}(\gamma;\alpha)
 =(1-\lambda)\sum_{k=0}^{\infty}\delta^k h(\phi^k(\gamma))
 +\lambda\sum_{j\in J}\alpha^j
       \sum_{k=0}^{\infty}\delta^k u_j(z(\phi^k(\gamma))).
\]
This is affine in $\alpha$.  Equilibrium optimality gives
\begin{equation}\label{eq:Ssupremum}
 S(\alpha)=\sup_{\gamma\in\Delta^n}C_{\phi,z}(\gamma;\alpha),
\end{equation}
where the supremum is attained at $\phi(\alpha)$.  Taking the supremum of affine functions proves convexity.  For every $\gamma$,
\[
 |C_{\phi,z}(\gamma;\alpha)-C_{\phi,z}(\gamma;\eta)|
 \leq\lambda\sum_{k=0}^{\infty}\delta^k B\|\alpha-\eta\|_1
 =\frac{\lambda B}{1-\delta}\|\alpha-\eta\|_1.
\]
Taking the supremum over $\gamma$ yields
\[
 S(\alpha)\leq S(\eta)+\frac{\lambda B}{1-\delta}\|\alpha-\eta\|_1.
\]
Interchanging $\alpha$ and $\eta$ proves \eqref{eq:Slipschitz}. Choosing $\gamma=\phi(\beta)$ in \eqref{eq:Ssupremum} gives the supporting inequality.  Its slope is exactly \eqref{eq:supportslope}, and equality at $\beta$ follows from the definition of $S(\beta)$.  Finally, separating the $k=0$ term of $C_{\phi,z}(\beta;\alpha)$ and using $U(\beta,z(\beta))=h(\beta)$ proves \eqref{eq:Bellmansupportidentity}.
\end{proof}

The following estimate controls the error of a supporting continuation along a feasible direction, including at nondifferentiable points of $S$.

\begin{lemma}[Supporting error on a feasible ray]\label{lem:rayerror}
Let $S:\Delta^n\to\R$ be convex and Lipschitz. Suppose $p\in\Delta^n$, $d\in\R^n$, and $t_0>0$ satisfy $p+t d\in\Delta^n$ for $0\leq t\leq t_0$. Write $\beta_t=p+t d$. For each $0<t\leq t_0$, let $b_{\beta_t}\in\R^n$ satisfy
\[
 S(\eta)\geq S(\beta_t)+b_{\beta_t}\cdot(\eta-\beta_t)
 \qquad(\eta\in\Delta^n).
\]
Define
\begin{equation}\label{eq:rayresidual}
 R_t=S(p)-S(\beta_t)+t\,b_{\beta_t}\mathbin{\cdot}d\geq0.
\end{equation}
Then $R_t/t\longrightarrow0$ as $t\downarrow0$.
\end{lemma}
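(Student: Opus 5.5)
The plan is to use the one-sided directional derivative of the convex function $t\mapsto S(\beta_t)$ at $t=0$ and compare it with the slopes $b_{\beta_t}\cdot d$ of the supporting functionals at $\beta_t$.

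First, nonnegativity of $R_t$ is the supporting inequality evaluated at $\eta=p$. Indeed, $p-\beta_t=-td$, so $S(p)\ge S(\beta_t)-t\,b_{\beta_t}\cdot d$, which is exactly $R_t\ge0$.

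Next, set $\psi(t)=S(\beta_t)$ on $[0,t_0]$. This function is convex, since it is the restriction of a convex function to a segment, and it is Lipschitz. Hence the right derivative $\psi'_+(0)=\lim_{t\downarrow0}(\psi(t)-\psi(0))/t$ exists and is finite, because the difference quotients are monotone and bounded by the Lipschitz constant.

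Now bound $b_{\beta_t}\cdot d$ from above. For $0<s<t$, apply the supporting inequality at $\beta_t$ with $\eta=\beta_s$, noting $\beta_s-\beta_t=-(t-s)d$. This gives
\[
 \psi(s)\ge\psi(t)-(t-s)\,b_{\beta_t}\cdot d,
 \qquad\text{that is,}\qquad
 b_{\beta_t}\cdot d\ge\frac{\psi(t)-\psi(s)}{t-s}.
\]
This is a lower bound, which points the wrong way. An upper bound on $b_{\beta_t}\cdot d$ requires points beyond $\beta_t$ along the ray, and those lie in the simplex whenever $t<t_0$. For $t<u\le t_0$, taking $\eta=\beta_u$ gives
\[
 b_{\beta_t}\cdot d\le\frac{\psi(u)-\psi(t)}{u-t}.
\]

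Write the residual in normalized form,
\[
 \frac{R_t}{t}=b_{\beta_t}\cdot d-\frac{\psi(t)-\psi(0)}{t}.
\]
The second term converges to $\psi'_+(0)$. For the first term, use the upper bound with $u=2t$, valid once $2t\le t_0$:
\[
 b_{\beta_t}\cdot d\le\frac{\psi(2t)-\psi(t)}{t}
 =2\,\frac{\psi(2t)-\psi(0)}{2t}-\frac{\psi(t)-\psi(0)}{t}.
\]
The right-hand side tends to $2\psi'_+(0)-\psi'_+(0)=\psi'_+(0)$. Combining this with $R_t\ge0$ gives
\[
 0\le\frac{R_t}{t}\le\frac{\psi(2t)-\psi(t)}{t}-\frac{\psi(t)-\psi(0)}{t}\longrightarrow0,
\]
so $R_t/t\to0$.

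The main point needing care is to use the supporting inequality at a point on the far side of $\beta_t$, here $\beta_{2t}$, which is feasible for small $t$. At nondifferentiable points the supporting slopes $b_{\beta_t}$ need not converge to anything, so the argument must avoid any appeal to their convergence. The sandwich bounds the quantity that matters, $b_{\beta_t}\cdot d$, without it. Lipschitz continuity is used only to guarantee that $\psi'_+(0)$ is finite; convexity of $\psi$ is what makes the one-sided limit exist.
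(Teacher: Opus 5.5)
Your proposal is correct and follows the paper's proof essentially verbatim. Both restrict $S$ to the ray, use convexity and Lipschitz continuity to get a finite right derivative at $0$, and evaluate the supporting inequality at $\beta_t$ both at $p$ and at $\beta_{2t}$ to obtain $0\le R_t/t\le\bigl(\psi(2t)-2\psi(t)+\psi(0)\bigr)/t\to0$.
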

\begin{proof}
Set $g(t)=S(p+t d)$.  The function $g$ is convex and Lipschitz.  For $0<s<t$, convexity gives
\[
 g(s)\leq\left(1-\frac{s}{t}\right)g(0)+\frac{s}{t}g(t),
 \qquad
 \frac{g(s)-g(0)}{s}\leq\frac{g(t)-g(0)}{t}.
\]
If $L_S$ is a Lipschitz constant for $S$ in the $\ell^1$ norm, these quotients have absolute value at most $L_S\|d\|_1$. They therefore have a finite limit as $t\downarrow0$.  For $0<2t\leq t_0$, the supporting inequality at $\beta_t$, evaluated at $p$ and at $p+2t d$, yields
\[
 \frac{g(t)-g(0)}{t}
 \leq b_{\beta_t}\mathbin{\cdot}d
 \leq\frac{g(2t)-g(t)}{t}.
\]
Therefore
\begin{align*}
 0\leq\frac{R_t}{t}
 &\leq\frac{g(2t)-2g(t)+g(0)}{t}\\
 &=2\left\{\frac{g(2t)-g(0)}{2t}
                 -\frac{g(t)-g(0)}{t}\right\}\longrightarrow0.
\end{align*}
Both quotients on the last line converge to the same one-sided limit.
\end{proof}

\subsection{Contact with the current-payoff bound}
\Cref{lem:postponement} gives $(1-\delta)S\geq h$.  A point of equality places a restriction on all actions optimal there, including actions not selected by a particular tie-breaking rule.

\begin{lemma}[Actions at a contact state]\label{lem:contactmax}
If $p\in\mathcal Z$, then \eqref{eq:contactmaxima} holds for every $x\in z^*(p)$.
\end{lemma}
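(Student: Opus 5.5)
The plan is to exploit the contact equality $(1-\delta)S(p)=h(p)$ against a one-parameter family of deviations. From $p$, the current self chooses a successor that moves slightly toward the pure worldview valuing $x$ most highly. \Cref{lem:convexvalue} gives an exact formula for the payoff of such a deviation, and \cref{lem:rayerror} shows that its error term is $o(t)$. Together these should force $M(x)\le h(p)$. The reverse inequality $h(p)=U(p,x)\le M(x)$ is automatic, and it will give the support restriction.

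\textbf{Setup.} Fix $p\in\mathcal Z$ and $x\in z^*(p)$, so that $U(p,x)=h(p)$. Choose $j\in\argmax_i u_i(x)$, so that $u_j(x)=M(x)$. Put $d=\pure{j}-p$ and $\beta_t=p+td$ for $0<t\le1$; every $\beta_t$ lies in $\Delta^n$.

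\textbf{Step 1: the successor's action.} Since $X$ is finite and $U$ is continuous, actions that are strictly suboptimal at $p$ remain strictly suboptimal near $p$. Hence $z^*(\beta_t)\subseteq z^*(p)$ for small $t$, and so $U(p,z(\beta_t))=h(p)$.

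\textbf{Step 2: a lower bound on $h(\beta_t)$.} By linearity of $U$ in the worldview,
\[
 h(\beta_t)\ge U(\beta_t,x)=h(p)+t\bigl(M(x)-h(p)\bigr).
\]

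\textbf{Step 3: the deviation payoff.} By \eqref{eq:Bellmansupportidentity},
\[
 C_{\phi,z}(\beta_t;p)=(1-\lambda)h(\beta_t)+\lambda U(p,z(\beta_t))+\delta f_{\beta_t}(p).
\]
The definition of $f_\beta$ and of the residual in \eqref{eq:rayresidual} give
\[
 f_{\beta_t}(p)=S(\beta_t)-t\,b_{\beta_t}\cdot d=S(p)-R_t .
\]
\cref{lem:rayerror} applies because \cref{lem:convexvalue} shows that $S$ is convex and Lipschitz with supporting slopes $b_{\beta_t}$. It yields $R_t/t\to0$.

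\textbf{Step 4: the equilibrium inequality.} Equilibrium optimality at $p$ gives $S(p)\ge C_{\phi,z}(\beta_t;p)$. Substitute Steps 1–3 and the contact identity $(1-\delta)S(p)=h(p)$. The $h(p)$ terms and the $\delta S(p)$ term cancel, which leaves
\[
 0\ge(1-\lambda)\,t\,\bigl(M(x)-h(p)\bigr)-\delta R_t .
\]
Dividing by $t$ and letting $t\downarrow0$, and using $\lambda<1$, gives $M(x)\le h(p)$.

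\textbf{Step 5: conclusion.} We also have $h(p)=U(p,x)=\sum_i p^i u_i(x)\le M(x)$, so $U(p,x)=h(p)=M(x)$. Since $\sum_i p^i\bigl(M(x)-u_i(x)\bigr)=0$ with every summand nonnegative, each $i$ with $p^i>0$ must satisfy $u_i(x)=M(x)$. This is \eqref{eq:contactmaxima}.

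\textbf{Main obstacle.} The delicate step is controlling the continuation term without any continuity of $\phi$. The identity $f_{\beta_t}(p)=S(p)-R_t$, combined with the $o(t)$ estimate of \cref{lem:rayerror}, is what makes this work. The ray estimate holds at nondifferentiable points of $S$ precisely because the same supporting slope $b_{\beta_t}$ appears in both the deviation payoff and the residual, so the two match exactly.
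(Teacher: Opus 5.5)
Your proof is correct and follows the paper's argument: a deviation along the ray $\beta_t=p+t(\pure{j}-p)$, the identity $f_{\beta_t}(p)=S(p)-R_t$ from \cref{lem:convexvalue}, and the $o(t)$ bound of \cref{lem:rayerror}, combined with the contact equality $(1-\delta)S(p)=h(p)$. The only difference is cosmetic: you fix $x$, take $j$ to be a maximizing evaluator of $x$, and use the lower bound $h(\beta_t)\ge U(\beta_t,x)$, whereas the paper fixes an arbitrary $j$, computes $h(\beta_t)=h(p)+t\max_{y\in z^*(p)}\{u_j(y)-U(p,y)\}$ exactly, and so obtains $u_j(y)\le h(p)$ for every $y\in z^*(p)$ at once.
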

\begin{proof}
Write $h_0=h(p)$ and $A=z^*(p)$.  Fix $j\in J$ and take the feasible direction $d=\alpha(j)-p$.  Define
\[
 s_x=U(d,x)=u_j(x)-U(p,x),\qquad
 r=\max_{x\in A}s_x,\qquad
 R=\max_{x\in X}|s_x|,
\]
where $U(d,x)=\sum_i d^i u_i(x)$ denotes the linear extension to direction vectors.  If $X\setminus A\ne\varnothing$, let
\[
 \eta=\min_{y\notin A}\{h_0-U(p,y)\}>0.
\]
For $x\in A$, $y\notin A$, and $t>0$ sufficiently small that $2tR<\eta$,
\[
 U(p+t d,x)-U(p+t d,y)
 =h_0-U(p,y)+t(s_x-s_y)\geq\eta-2tR>0.
\]
If $R=0$ the inequality holds without a restriction involving $R$; if $A=X$ there are no inactive actions to exclude.  Consequently, for every sufficiently small $t>0$, every optimal action at $\beta_t=p+t d$ belongs to $A$ and maximizes $s_x$ over $A$.  In particular,
\begin{equation}\label{eq:raystaticvalues}
 h(\beta_t)=h_0+t r,\qquad U(p,z(\beta_t))=h_0.
\end{equation}
A self at $p$ can choose $\beta_t$ as successor.  Applying \eqref{eq:Bellmansupportidentity}, \eqref{eq:raystaticvalues}, and \eqref{eq:rayresidual} gives
\begin{align}
 S(p)&\geq C_{\phi,z}(\beta_t;p)\notag\\
 &=h_0+(1-\lambda)t r+\delta\{S(p)-R_t\}.\label{eq:contactdeviation}
\end{align}
Since $(1-\delta)S(p)=h_0$, this rearranges to
\[
 (1-\lambda)r\leq\delta\frac{R_t}{t}.
\]
By \cref{lem:rayerror}, $r\leq0$.  Thus $u_j(x)\leq h_0$ for every $x\in A$.  The index $j$ was arbitrary, so
\[
 M(x)\leq h_0=U(p,x)\leq M(x)\qquad(x\in A).
\]
This proves $M(x)=h_0$.  Finally,
\[
 0=M(x)-U(p,x)=\sum_{i\in J}p^i\{M(x)-u_i(x)\}
\]
is a sum of nonnegative terms.  If $p^i>0$, the corresponding bracket must be zero, proving the support assertion.
\end{proof}

At a contact state, every currently optimal action is already evaluated as highly as any pure worldview can evaluate it.  Otherwise, a sufficiently small change toward a more favorable evaluator would yield a first-order gain in experienced utility.  Convexity makes the loss associated with the supporting continuation smaller than first order along that change.  Inequality \eqref{eq:contactdeviation} expresses this comparison without assuming the equilibrium policy is continuous.

\subsection{Accumulation points and purification}
\begin{proof}[Proof of \cref{thm:genericpurification} for pure strategies]
By \cref{lem:convexvalue}, $(1-\delta)S-h$ is continuous.  Its zero set $\mathcal Z$ is therefore compact.  Along any equilibrium path, \cref{thm:alignment} gives
\begin{equation}\label{eq:contactpathgap}
 (1-\delta)S(\alpha_t)-h(\alpha_t)=A_t-h_t\longrightarrow0.
\end{equation}
Every path has an accumulation point because $\Delta^n$ is compact.  Continuity in \eqref{eq:contactpathgap} puts every such point in $\mathcal Z$ and proves nonemptiness.  The support characterization is \cref{lem:contactmax}.

Assume \eqref{eq:uniquevaluator}.  At any $p\in\mathcal Z$, choose $x\in z^*(p)$.  This action is implementable.  By \eqref{eq:contactmaxima}, the support of $p$ consists of maximizers of $u_j(x)$.  Their uniqueness forces $p$ to be pure.  Thus $\mathcal Z\subseteq\mathcal P$ and
\begin{equation}\label{eq:approachpuregeneric}
 \dist(\alpha_t,\mathcal P)\longrightarrow0.
\end{equation}
Indeed, failure of this limit would give a subsequence at a fixed positive distance from $\mathcal P$ and a nonpure accumulation point.

To obtain convergence to a single vertex, suppose that the increments do not tend to zero. There are $\epsilon>0$ and infinitely many dates $t$ with $\|\alpha_{t+1}-\alpha_t\|_1\geq\epsilon$. Compactness and finiteness of $X$ give a subsequence of these dates such that
\[
 \alpha_{t_r}\to p,\quad \alpha_{t_r+1}\to q,\quad p\ne q,
 \qquad x_{t_r+1}=x
\]
for a fixed action $x$, with $\|p-q\|_1\geq\epsilon$. Both $p$ and $q$ belong to $\mathcal Z\subseteq\mathcal P$.  Static optimality at $\alpha_{t_r+1}$ and continuity imply $x\in z^*(q)$ and $U(q,x)=L$.  The $k=1$ conclusion of \cref{thm:alignment} also gives $U(p,x)=L$.  By \cref{lem:contactmax}, $U(q,x)=M(x)$.  Hence two distinct pure worldviews maximize $u_j(x)$, contradicting \eqref{eq:uniquevaluator}.  We have proved
\begin{equation}\label{eq:vanishingincrements}
 \|\alpha_{t+1}-\alpha_t\|_1\longrightarrow0.
\end{equation}
Distinct simplex vertices have $\ell^1$ distance $2$.  By \eqref{eq:approachpuregeneric}, eventually every state is within $1/4$ of a vertex.  By \eqref{eq:vanishingincrements}, eventually consecutive states are less than $1$ apart.  A move between two different vertex neighborhoods would have distance at least $2-1/4-1/4=3/2$, so all sufficiently late states lie near one fixed vertex.  Equation \eqref{eq:approachpuregeneric} then proves convergence to that vertex.  The case $n=1$ is immediate.
\end{proof}

\begin{corollary}[Payoff ties are necessary for persistent mixing]\label{cor:genericrobustness}
If a stationary MPE path has a mixed accumulation point $p$, every $x\in z^*(p)$ has at least two maximizing pure evaluators, and all worldviews in the support of $p$ attain that maximum.  For fixed finite $J$ and $X$, the set of payoff arrays admitting a stationary MPE path that does not converge to a pure worldview has empty interior in $\R^{|J||X|}$ and is contained in
\begin{equation}\label{eq:hyperplanenecessity}
 \bigcup_{x\in X}\ \bigcup_{i<j}\{u:u_i(x)=u_j(x)\}.
\end{equation}
In particular, there is no nonempty open set of unrestricted payoff arrays on each of which a nonpurifying stationary equilibrium exists.
\end{corollary}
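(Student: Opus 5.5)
The first assertion follows from \cref{lem:contactmax} together with part (i) of \cref{thm:genericpurification}. Let $p$ be a mixed accumulation point of a stationary MPE path. Then $p\in\mathcal Z$. For every $x\in z^*(p)$, \eqref{eq:contactmaxima} gives $u_i(x)=M(x)$ for each $i$ with $p^i>0$. Since $p$ is mixed, at least two indices have positive weight. Hence each such $x$ has at least two maximizing pure evaluators, and every worldview in the support of $p$ attains the maximum.

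For the payoff-array statement, I would argue by contrapositive. Suppose a payoff array $u$ lies outside the union \eqref{eq:hyperplanenecessity}. Then $u_i(x)\neq u_j(x)$ for all $x$ and $i<j$, so every action, and in particular every implementable action, has a unique maximizing pure worldview. Condition \eqref{eq:uniquevaluator} therefore holds, and part (iii) of \cref{thm:genericpurification} shows that every path of every existing stationary MPE converges to a pure worldview. Consequently, any array admitting a nonconverging path lies in the union.

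The union is a finite union of proper linear hyperplanes in $\R^{|J||X|}$, and each hyperplane is closed with empty interior. By the Baire property for finite unions, or simply because such a union has Lebesgue measure zero, the union has empty interior. Any subset of it also has empty interior, and no nonempty open set of arrays can consist entirely of arrays admitting a nonpurifying equilibrium. The case $n=1$ is vacuous: the union is empty, and every path is constant at the unique pure worldview.

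The argument has no real obstacle. The one point to check is the meaning of "does not converge to a pure worldview." This includes paths that converge to a mixed worldview, paths that oscillate among states near pure worldviews, and paths that merely satisfy \eqref{eq:asymptoticpurification}. All of these are excluded by part (iii) under \eqref{eq:uniquevaluator}. The contrapositive therefore needs only part (iii), not the accumulation-point statement. For randomized equilibria, the same reasoning applies almost surely via the randomized version of \cref{thm:genericpurification}.
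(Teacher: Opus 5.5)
Your proof is correct and follows essentially the same route as the paper. The support restriction comes from the contact-set characterization (\cref{lem:contactmax}, i.e.\ part~(ii) of \cref{thm:genericpurification}) applied at a point with at least two positive coordinates. The hyperplane containment is the contrapositive of part~(iii), since payoffs distinct within each action imply \eqref{eq:uniquevaluator}. Your explicit use of part~(i) to place the accumulation point in $\mathcal Z$, and your separate treatment of $n=1$, are minor clarifications that the paper leaves implicit.
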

\begin{proof}
The first assertion follows from \cref{thm:genericpurification}(ii), because a mixed worldview has at least two positive coordinates.  If \eqref{eq:uniquevaluator} holds, part (iii) rules out every path that fails to converge to a pure worldview.  Failure of \eqref{eq:uniquevaluator} requires a tie between two maximizing pure evaluators of some implementable action, and hence membership in \eqref{eq:hyperplanenecessity}.  Each set in that finite union is a proper affine hyperplane and has empty interior; their finite union also has empty interior.  Equivalently, payoff arrays with all entries distinct within each action form an open dense subset of the full payoff space and satisfy \eqref{eq:uniquevaluator}.
\end{proof}

This restriction concerns all stationary equilibria, not only those satisfying the sufficient upper bound of \cref{thm:generaltrap}.  The common-value examples have exactly the kind of tie permitted by the corollary.  Payoff ties alone, however, do not guarantee persistence: the continuation incentives must also support it.

\begin{corollary}[Weak tailoring, conditional on existence]\label{cor:weaktailoringconvergence}
Suppose each implementable action has a pure worldview that weakly implements it and values it strictly more highly than every other pure worldview.  For every $0<\lambda,\delta<1$, every stationary MPE that exists converges from every initial state to a pure worldview.  The same conclusion holds almost surely under stationary randomization.
\end{corollary}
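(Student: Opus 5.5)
The plan is to reduce the corollary to \cref{thm:genericpurification}(iii) by checking the unique-valuator condition \eqref{eq:uniquevaluator}. The weak-tailoring hypothesis supplies, for each implementable action $x\in X^I$, an index $i(x)$ with $x\in z^*(\alpha(i(x)))$ and the strict cross-worldview comparison \eqref{eq:anchorcross}: $u_{i(x)}(x)>u_j(x)$ for every $j\neq i(x)$. This comparison alone says that $i(x)$ is the unique element of $\argmax_{j\in J}u_j(x)$, so \eqref{eq:uniquevaluator} holds for every $x\in X^I$. The weak implementation requirement \eqref{eq:weaktailoring} plays no role in the convergence argument. This matches the remark after \cref{thm:genericpurification} that the maximizing worldview need not implement the action. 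Weak implementation matters only for existence, which the corollary assumes.

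With \eqref{eq:uniquevaluator} established, I will fix any stationary pure-strategy MPE $(\phi,z)$ and any $0<\lambda,\delta<1$. \Cref{thm:genericpurification}(iii) then gives, from every initial state, some $j\in J$ with $\alpha_t\to\alpha(j)$. No restriction on $\lambda$ enters, because the theorem holds for all interior parameters.

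For stationary randomization, I will invoke the randomized version asserted at the end of \cref{thm:genericpurification}, whose proof is deferred to \cref{sec:randomcontact}. That version states that under \eqref{eq:uniquevaluator} the pathwise conclusions hold almost surely for each fixed initial state, with a possibly random limiting vertex. Since \eqref{eq:uniquevaluator} is a condition on the payoff array alone and does not depend on the strategy, the same verification applies verbatim.

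I expect no serious obstacle. The main point of care is to avoid reading ``weakly implements'' as sufficient for anything beyond existence questions, and to note that the argument uses only \eqref{eq:anchorcross}. The substantive difficulty, existence, lies outside this corollary and is addressed by \cref{thm:weaktailoringnonexistence}.
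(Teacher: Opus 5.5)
Your proposal is correct and follows the paper's own proof. The strict cross-worldview comparison \eqref{eq:anchorcross} is exactly the unique-valuator condition \eqref{eq:uniquevaluator}, so \cref{thm:genericpurification}(iii) gives convergence for pure strategies and \cref{thm:randomcontact} gives the almost-sure randomized version, with weak implementation playing no role.
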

\begin{proof}
The strict cross-worldview comparison is \eqref{eq:uniquevaluator}.  Apply \cref{thm:genericpurification}; weak implementation is not needed for this asymptotic conclusion.  The randomized assertion follows from \cref{thm:randomcontact} below.
\end{proof}

\subsection{Finite-time adoption at low mindset flexibility}\label{sec:genericfinite}
The asymptotic result does not require a unique action at a pure worldview.  Such uniqueness does give finite-time adoption for sufficiently high $\lambda$, without requiring every implementable action to have a tailored worldview.

Assume \eqref{eq:uniquevaluator} and that $z^*(\alpha(j))=\{x_j\}$ for every $j\in J$.  Write $h_j=u_j(x_j)$ and define
\begin{equation}\label{eq:criticalpureindices}
 J^\dagger=\{j\in J:u_j(x_j)=M(x_j)\}.
\end{equation}
This set is nonempty: a pure worldview attaining $\max_{j,x}u_j(x)$ belongs to it.  For $j\in J^\dagger$ and $y\in X\setminus\{x_j\}$ put
\begin{equation}\label{eq:genericgaps}
 D_{jy}=h_j-u_j(y)>0,\qquad R_{jy}=M(y)-u_j(y)\geq0,
\end{equation}
and set
\begin{equation}\label{eq:genericfinitecutoff}
 \widehat\lambda=
 \max\left(\{0\}\cup
 \left\{1-\frac{D_{jy}}{R_{jy}}:
    j\in J^\dagger,\ y\in X\setminus\{x_j\},\ R_{jy}>0\right\}\right)<1.
\end{equation}

\begin{theorem}[Finite-time purification without full tailoring]\label{thm:genericfinite}
Under these assumptions, if $\widehat\lambda<\lambda<1$ and $0<\delta<1$, every stationary MPE that exists reaches an absorbing pure worldview in finite time from every initial state.  For stationary randomized MPE the hitting time is finite almost surely.
\end{theorem}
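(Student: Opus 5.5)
The plan is to combine the asymptotic conclusion of \cref{thm:genericpurification}(iii) with two further facts. First, the vertices indexed by $J^\dagger$ are absorbing once $\lambda>\widehat\lambda$. Second, once the path is close enough to such a vertex, the next self strictly prefers jumping to it.

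\emph{Step 1: absorption at critical vertices.} Fix $j\in J^\dagger$ and current worldview $\pure{j}$, and bound the assessment $q_{\pure{j}}(\beta,y)$ of every statically implementable future pair, as in \cref{lem:pureabsorbcounter}.
\begin{itemize}
\item If $y=x_j$, then $q\le(1-\lambda)M(x_j)+\lambda h_j=h_j$. Equality requires $U(\beta,x_j)=M(x_j)$, which by \eqref{eq:uniquevaluator} forces $\beta=\pure{j}$.
\item If $y\neq x_j$, then $q\le u_j(y)+(1-\lambda)R_{jy}$, which is strictly below $h_j$ exactly when $(1-\lambda)R_{jy}<D_{jy}$. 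This is guaranteed by $\lambda>\widehat\lambda$ when $R_{jy}>0$, and trivially when $R_{jy}=0$.
\end{itemize}
Hence \eqref{eq:flowceiling} holds at $\bar\alpha=\pure{j}$, and \cref{thm:generaltrap} gives equality at every future date. The strictness just established then forces every future pair to be $(\pure{j},x_j)$, so $\phi(\pure{j})=\pure{j}$.

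\emph{Step 2: the limit vertex is critical.} By \cref{thm:genericpurification}(iii), $\alpha_t\to\pure{j}$ for some $j$. That theorem's part (i) gives $\pure{j}\in\mathcal Z$. Since $z^*(\pure{j})=\{x_j\}$, \cref{lem:contactmax} yields $h_j=u_j(x_j)=M(x_j)$, so $j\in J^\dagger$ and Step 1 applies to the limit.

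\emph{Step 3: finite hitting time.} Because $x_j$ is uniquely optimal at $\pure{j}$, it is uniquely optimal on a relative neighbourhood of $\pure{j}$. So there is $T$ with $x_s=x_j$ for all $s\ge T$. For $t\ge T$ the equilibrium continuation from $\alpha_t$ is
\[
 S(\alpha_t)=\sum_{k\ge0}\delta^k\bigl[(1-\lambda)U(\alpha_{t+1+k},x_j)+\lambda U(\alpha_t,x_j)\bigr].
\]
The deviation to the absorbing successor $\pure{j}$ yields $\sum_k\delta^k[(1-\lambda)M(x_j)+\lambda U(\alpha_t,x_j)]$, which is at least as large term by term. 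Equilibrium optimality therefore forces $U(\alpha_{t+1+k},x_j)=M(x_j)$ for all $k$. By \eqref{eq:uniquevaluator} this gives $\alpha_{t+1}=\pure{j}$, and absorption keeps the path there.

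For stationary randomization, \cref{sec:randomization} supplies almost-sure convergence to a random vertex. I would repeat Steps 1 and 3 with conditional expectations of the continuation in place of the deterministic sums. Strictness in Step 1 then forces the kernel at $\pure{j}$ to be the point mass at $\pure{j}$. In Step 3, on the event of convergence to $\pure{j}$, one needs every future action to equal $x_j$ with conditional probability one.

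I expect this last point to be the main obstacle. Pathwise, eventually $x_s=x_j$, but the conditional distribution of future actions at a fixed date may still place small mass on other actions. I would first handle it by restricting to dates at which the current state lies in a neighbourhood of $\pure{j}$ where every successor point, not only those on the realized path, has assessment of non-$x_j$ actions strictly below $U(\alpha,x_j)$. Step 1's strict gap, perturbed by continuity from $\pure{j}$ to nearby current states, should provide such a neighbourhood. With it, any positive-probability detour away from $\pure{j}$ is strictly dominated by jumping to $\pure{j}$.
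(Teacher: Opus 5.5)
Your proposal is correct. Steps 1 and 2 match the paper's proof: the strict caps combined with postponement give absorption at $\pure{j}$ for $j\in J^\dagger$, and the contact-state property places the limit index in $J^\dagger$.

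Step 3 takes a different route in the pure-strategy case. The paper does not use eventual constancy of the action along the path. It builds an explicit capture region $\|\alpha-\pure{j}\|_1<r_j$ on which the score gaps satisfy $g_{x_j}(\alpha)-g_y(\alpha)>\gamma_{jy}/2$. From any such $\alpha$, every future pair $(\beta,y)$ has assessment at most $g_{x_j}(\alpha)$, with equality only at $(\pure{j},x_j)$. Hence every equilibrium jumps to $\pure{j}$ immediately from anywhere in that region.

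Your argument instead uses the fact that, on a deterministic path, the entire equilibrium continuation from $\alpha_t$ is known once $t\ge T$. It uses only $x_j$, so the jump to the absorbing vertex dominates it term by term, and optimality forces $\alpha_{T+1}=\pure{j}$. This is shorter and needs no radius or constants $L_{xy}$. It also shows that the threshold $\widehat\lambda$ is used only through absorption at the limit vertex.

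The price is that your argument is path-based rather than state-based. It conditions on the whole future of the realized path, and that is exactly what fails under randomization, as you anticipated. The paper's state-based capture region covers pure and randomized equilibria with a single argument.

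Your fallback for the randomized case is the paper's neighborhood argument, and it works. A few points to tighten:
\begin{itemize}
\item The natural benchmark is the flow from jumping to the vertex, $g_{x_j}(\alpha)=(1-\lambda)M(x_j)+\lambda U(\alpha,x_j)$. Your stronger benchmark $U(\alpha,x_j)$ is also attainable near the vertex, since the two coincide at $\pure{j}$.
\item The case $y=x_j$, with equality only at $\beta=\pure{j}$, comes from the unique maximizing evaluator.
\item You still need \cref{thm:randomcontact} to place the random limit index in $J^\dagger$. You listed only Steps 1 and 3 for the randomized repetition.
\item A countable intersection over dates is needed so that the one-step capture holds simultaneously almost surely.
\end{itemize}
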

\begin{proof}
For $j\in J^\dagger$, write $p=\alpha(j)$ and $x=x_j$.  For every $y\ne x$,
\begin{equation}\label{eq:genericstrictcap}
 (1-\lambda)M(y)+\lambda u_j(y)
 =h_j-\{D_{jy}-(1-\lambda)R_{jy}\}<h_j.
\end{equation}
If $R_{jy}=0$, strictness follows from $D_{jy}>0$; otherwise it follows from \eqref{eq:genericfinitecutoff}.  For action $x$,
\[
 (1-\lambda)U(\beta,x)+\lambda u_j(x)\leq h_j,
\]
with equality only at $\beta=p$ by \eqref{eq:uniquevaluator}.  Let $S(p)=C_{\phi,z}(\phi(p);p)$ for the equilibrium under consideration. The postponement inequality and the upper bounds give
\[
 \frac{h_j}{1-\delta}\leq S(p)\leq\frac{h_j}{1-\delta}.
\]
Writing $(\beta_k,y_k)=(\phi^{k+1}(p),z(\phi^{k+1}(p)))$, equality implies
\[
 0=\sum_{k=0}^{\infty}\delta^k
 \bigl[h_j-(1-\lambda)U(\beta_k,y_k)-\lambda u_j(y_k)\bigr].
\]
Every summand is nonnegative, and the first is zero only for $(\beta_0,y_0)=(p,x)$. Thus $\phi(p)=p$.

The same argument gives a neighborhood on which every equilibrium chooses $p$ immediately.  Define
\[
 g_y(\alpha)=(1-\lambda)M(y)+\lambda U(\alpha,y),\qquad
 \gamma_{jy}=D_{jy}-(1-\lambda)R_{jy}>0,
\]
and $L_{xy}=\max_{i\in J}|u_i(x)-u_i(y)|>0$ for $y\ne x$.  When $X\setminus\{x\}\ne\varnothing$, choose
\begin{equation}\label{eq:localradius}
 r_j=\min\left\{1,\ \min_{y\ne x}\frac{\gamma_{jy}}{2\lambda L_{xy}}\right\}>0;
\end{equation}
if there is no other action, take $r_j=1$.  For $\|\alpha-p\|_1<r_j$ and $y\ne x$,
\[
 g_x(\alpha)-g_y(\alpha)
 \geq\gamma_{jy}-\lambda L_{xy}\|\alpha-p\|_1
 >\frac{\gamma_{jy}}2>0.
\]
For every future pair $(\beta,y)$,
\begin{equation}\label{eq:localcapturegap}
 \begin{split}
 g_x(\alpha)-q_\alpha(\beta,y)
 &=g_x(\alpha)-g_y(\alpha)
   +(1-\lambda)\{M(y)-U(\beta,y)\}\\
 &\geq0.
 \end{split}
\end{equation}
If $y\ne x$, the first difference is strictly positive. If $y=x$, the second difference vanishes only at $\beta=p$, since $p$ uniquely maximizes the evaluation of $x$. Hence equality in \eqref{eq:localcapturegap} holds only for $(\beta,y)=(p,x)$. Choosing the absorbing state $p$ attains $g_x(\alpha)/(1-\delta)$. For an equilibrium continuation $(\alpha_k,x_k)_{k\geq1}$ from $\alpha$, optimality therefore gives
\[
 0=\sum_{k=1}^\infty\delta^{k-1}
       [g_x(\alpha)-q_\alpha(\alpha_k,x_k)].
\]
Every term is nonnegative. The first vanishes only when $\alpha_1=p$ and $x_1=x$. Thus every equilibrium selects $p$ immediately throughout this neighborhood. The pointwise identity \eqref{eq:localcapturegap} also applies to randomized successors.

By \cref{thm:genericpurification}, an equilibrium path converges to a pure $p$.  Its contact-state property and unique static action imply that its index lies in $J^\dagger$.  The path eventually enters the corresponding neighborhood and reaches $p$ in the next period.  The randomized assertion follows from the almost-sure convergence and the first-period equality argument in \cref{thm:randomcontact}; its proof below includes the hitting-time argument.
\end{proof}

The strictly tailored theorem below is stronger in a different respect.  It supplies equilibrium existence and a global one-step policy characterization.  \Cref{thm:genericfinite} requires no tailored destination for every action, but allows a transient period and remains conditional on equilibrium existence.

\section{Strictly tailored worldviews and one-step convergence}\label{sec:tailored}

We prove Theorem~\ref{thm:onestepcharacterization} under the strictly tailored condition of \cref{def:strictlytailored}.  Recall the tailored pure worldview $\alpha^I(x)=\alpha(i(x))$, the differences $D_{xy},R_{xy}$ in \eqref{eq:Dxy}--\eqref{eq:Rxy}, and the threshold $\lambda^\star$ in \eqref{eq:lambdastar}.

For $\lambda>\lambda^\star$ and $x\neq y$, the inequality $(1-\lambda)R_{xy}<D_{xy}$ gives
\begin{align}
 &(1-\lambda)u_{i(y)}(y)+\lambda u_{i(x)}(y)\notag\\
 &\qquad=u_{i(x)}(y)+(1-\lambda)R_{xy}
 <u_{i(x)}(y)+D_{xy}=u_{i(x)}(x).\label{eq:anchorcomparison}
\end{align}
This sufficient threshold ensures absorption of every tailored pure worldview.

\subsection{Absorbing pure worldviews}

The next lemma adapts the absorption argument of \citet[Proposition~2, p.~729; Online Appendix, p.~3]{BBMZ2021} to the strictly tailored condition, using the explicit threshold \eqref{eq:lambdastar}.

\begin{lemma}[Absorption of tailored pure worldviews]\label{lem:anchorabsorption}
Assume \eqref{eq:anchorstatic}--\eqref{eq:anchorcross} and $\lambda^\star<\lambda<1$.  In every stationary MPE,
\begin{equation}\label{eq:anchorabsorbs}
 \phi(\tailored{x})=\tailored{x}
 \quad\text{for every }x\in X^I.
\end{equation}
\end{lemma}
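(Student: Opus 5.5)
The plan is to run the same ceiling-plus-postponement argument used for \cref{lem:pureabsorbcounter} and in the first step of \cref{thm:genericfinite}. Fix $x\in X^I$, write $i=i(x)$ and $p=\tailored{x}$. By \eqref{eq:anchorstatic}, $z^*(p)=\{x\}$ and $h(p)=u_i(x)$. First I will show that, seen from the current worldview $p$, every statically implementable future pair $(\beta,y)$ has assessment $q_p(\beta,y)=(1-\lambda)U(\beta,y)+\lambda u_i(y)$ at most $u_i(x)$. I will also show that equality holds only at $(\beta,y)=(p,x)$. This is the upper-bound condition \eqref{eq:flowceiling} at $\bar\alpha=p$, with a unique equality pair.

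For $y=x$, the bound $U(\beta,x)\le\max_j u_j(x)=u_i(x)$ gives $q_p(\beta,x)\le u_i(x)$. Equality requires $U(\beta,x)=u_i(x)$. By the strict inequality \eqref{eq:anchorcross}, this forces $\beta$ to put all weight on $i$, so $\beta=p$. For $y\in X^I\setminus\{x\}$, \eqref{eq:anchorcross} applied to $y$ gives $U(\beta,y)\le u_{i(y)}(y)$. Hence $q_p(\beta,y)\le(1-\lambda)u_{i(y)}(y)+\lambda u_{i(x)}(y)$, and \eqref{eq:anchorcomparison} makes this strictly less than $u_{i(x)}(x)$ because $\lambda>\lambda^\star$. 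Actions outside $X^I$ never occur as $z(\beta)$, since $z(\beta)\in z^*(\beta)\subseteq X^I$.

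Next I take any stationary MPE and consider the continuation $S(p)=C_{\phi,z}(\phi(p);p)=\sum_{k\ge0}\delta^k q_p(\beta_k,y_k)$, where $\beta_k=\phi^{k+1}(p)$ and $y_k=z(\beta_k)$. The ceiling gives $S(p)\le u_i(x)/(1-\delta)$. \Cref{lem:postponement} gives $S(p)\ge h(p)/(1-\delta)=u_i(x)/(1-\delta)$. So the nonnegative series $\sum_k\delta^k[u_i(x)-q_p(\beta_k,y_k)]$ vanishes, and each term is zero. In particular the $k=0$ term is zero, so $(\phi(p),z(\phi(p)))=(p,x)$ by the uniqueness of the equality pair. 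That gives $\phi(p)=p$.

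I do not expect a serious obstacle. The only delicate point is the equality case for $y=x$. Strictness there comes from the cross comparison \eqref{eq:anchorcross}, not from the threshold. It should be checked that a mixed $\beta$ with $U(\beta,x)=u_i(x)$ is ruled out: any positive weight on some $j\ne i$ lowers $U(\beta,x)$ strictly, because $u_j(x)<u_i(x)$.
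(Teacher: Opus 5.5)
Your proof is correct and matches the paper's argument: the same case split ($y=x$ versus $y\in X^I\setminus\{x\}$, via \eqref{eq:anchorcross} and \eqref{eq:anchorcomparison}) bounds every future assessment from $\tailored{x}$ by $u_{i(x)}(x)$, with equality only at $(\tailored{x},x)$, and this bound is combined with the postponement inequality of \cref{lem:postponement}. The only difference is cosmetic: the paper assumes $\phi(\tailored{x})\neq\tailored{x}$ and derives a contradiction, whereas you conclude directly that the nonnegative series vanishes term by term.
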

\begin{proof}
Fix $x\in X^I$ and abbreviate $i=i(x)$ and $p=\pure{i}$.  For any candidate next worldview $\beta$ and any $y\in z^*(\beta)$, let
\[
 q=(1-\lambda)U(\beta,y)+\lambda u_i(y).
\]
If $y\neq x$, then $y\in X^I$ and \eqref{eq:anchorcross} for $y$ implies $U(\beta,y)\leq u_{i(y)}(y)$.  By \eqref{eq:anchorcomparison},
\begin{equation}\label{eq:anchorflowother}
 q\leq(1-\lambda)u_{i(y)}(y)+\lambda u_i(y)<u_i(x).
\end{equation}
If $y=x$, then
\begin{align}
 u_i(x)-q
 &=(1-\lambda)\bigl[u_i(x)-U(\beta,x)\bigr]\notag\\
 &=(1-\lambda)\sum_{j\neq i}\beta^j\bigl[u_i(x)-u_j(x)\bigr]\geq0.\label{eq:anchorflowsame}
\end{align}
Every coefficient in brackets is strictly positive, so equality holds if and only if $\beta=p$.

Suppose a stationary MPE has $\beta_0=\phi(p)\neq p$.  Put $\beta_k=\phi^k(\beta_0)$, $y_k=z(\beta_k)$, and
\[
 q_k=(1-\lambda)U(\beta_k,y_k)+\lambda u_i(y_k).
\]
Equations \eqref{eq:anchorflowother}--\eqref{eq:anchorflowsame} give $q_k\leq u_i(x)$ for every $k$, with $q_0<u_i(x)$.  Hence
\begin{align}
 \frac{u_i(x)}{1-\delta}-C_{\phi,z}(\beta_0;p)
 &=\sum_{k=0}^{\infty}\delta^k\bigl[u_i(x)-q_k\bigr]\notag\\
 &\geq u_i(x)-q_0>0.\label{eq:anchorCstrict}
\end{align}
But $h(p)=u_i(x)$, and \cref{lem:postponement} requires
\[
 C_{\phi,z}(\phi(p);p)\geq\frac{u_i(x)}{1-\delta}.
\]
This contradicts \eqref{eq:anchorCstrict}, proving absorption.
\end{proof}

\subsection{Existence and a value bound}

For $g_x,G,\Gamma$ from \eqref{eq:gxanchor}--\eqref{eq:Ganchor}, consider a policy that chooses a maximizing tailored pure worldview.

\begin{theorem}[Construction of a stationary MPE]\label{thm:anchorexistence}
Assume the strictly tailored condition and $\lambda^\star<\lambda<1$.  Choose any selections
\[
 z(\alpha)\in z^*(\alpha),\qquad x^\dagger(\alpha)\in\Gamma(\alpha),
\]
and set
\begin{equation}\label{eq:anchorpolicy}
 \phi(\alpha)=\tailored{x^\dagger(\alpha)}.
\end{equation}
Then $(\phi,z)$ is a stationary MPE for every $\delta\in(0,1)$.
\end{theorem}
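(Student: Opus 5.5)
The approach mirrors the explicit constructions in \cref{prop:globalMPEcounter} and the proof of \cref{thm:manyworldviews}. We first show that every destination the policy \eqref{eq:anchorpolicy} selects is absorbing under that same policy, with its tailored action. We then bound every deviation continuation by $G(\alpha)/(1-\delta)$ and observe that the prescribed successor attains this bound. Condition \eqref{eq:MPEaction} holds by construction, so only \eqref{eq:MPEworldview} needs checking.

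\emph{Step 1 (self-consistency at tailored states).} Fix $x\in X^I$ and $p=\tailored{x}$, with $i=i(x)$. By \eqref{eq:anchorstatic}, $z^*(p)=\{x\}$, so $z(p)=x$. We show that $\Gamma(p)=\{x\}$. First,
\[
 g_x(p)=(1-\lambda)u_i(x)+\lambda u_i(x)=u_i(x).
\]
For $y\neq x$ in $X^I$,
\[
 g_y(p)=(1-\lambda)u_{i(y)}(y)+\lambda u_i(y)<u_i(x)
\]
by \eqref{eq:anchorcomparison}, which uses $\lambda>\lambda^\star$. Hence $x^\dagger(p)=x$ and $\phi(p)=p$ regardless of tie-breaking elsewhere. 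Every destination in the range of $\phi$ is therefore absorbing and plays its tailored action forever.

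\emph{Step 2 (upper bound on any continuation).} Fix a current state $\alpha$ and any successor $\beta$. Each period $k\geq0$ of the continuation occurs at some state $\gamma=\phi^k(\beta)$ with action $y=z(\gamma)\in z^*(\gamma)\subseteq X^I$. By \eqref{eq:anchorcross}, $U(\gamma,y)\leq u_{i(y)}(y)$, since $U(\gamma,y)$ is a convex combination of the $u_j(y)$, all of which are at most $u_{i(y)}(y)$. Therefore the per-period assessment satisfies
\[
 (1-\lambda)U(\gamma,y)+\lambda U(\alpha,y)\leq g_y(\alpha)\leq G(\alpha).
\]
This bound holds at every date regardless of where $\phi$ sends later states, so no absorption argument is needed here. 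Summing gives
\[
 C_{\phi,z}(\beta;\alpha)\leq \frac{G(\alpha)}{1-\delta}.
\]

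\emph{Step 3 (attainment).} For $\beta=\phi(\alpha)=\tailored{x^\dagger(\alpha)}$, Step 1 shows that the continuation stays at $\beta$ with action $x^\dagger(\alpha)$ in every period. Each period's assessment is then
\[
 (1-\lambda)u_{i(x^\dagger)}(x^\dagger)+\lambda U(\alpha,x^\dagger)=g_{x^\dagger(\alpha)}(\alpha)=G(\alpha).
\]
So $C_{\phi,z}(\phi(\alpha);\alpha)=G(\alpha)/(1-\delta)$, which dominates every alternative by Step 2. This establishes \eqref{eq:MPEworldview} for every $\delta\in(0,1)$.

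The main obstacle is Step 1: the construction is self-referential, and the attainment in Step 3 requires that tailored destinations be fixed by the constructed $\phi$ itself. \Cref{lem:anchorabsorption} cannot be used for this, because it presupposes an equilibrium. The strict inequality \eqref{eq:anchorcomparison} resolves it, since it makes $x$ the unique maximizer of $g$ at $\tailored{x}$ and so removes any dependence on tie-breaking. The strictness in \eqref{eq:anchorstatic} is used in the same step, to pin down $z(p)=x$.
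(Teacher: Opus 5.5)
Your proposal is correct and follows the paper's proof essentially step for step. The paper first uses \eqref{eq:anchorcomparison} to show $\Gamma(\tailored{x})=\{x\}$, so that each tailored destination is fixed by the constructed policy with $z(\tailored{x})=x$. It then bounds each continuation period by $g_{y_k}(\alpha)\leq G(\alpha)$ using static optimality and \eqref{eq:anchorcross}, and finally observes that the prescribed successor attains $G(\alpha)/(1-\delta)$.
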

\begin{proof}
At $p=\tailored{x}$,
\[
 g_x(p)=u_{i(x)}(x).
\]
For $y\neq x$, \eqref{eq:anchorcomparison} gives $g_y(p)<g_x(p)$.  Therefore $\Gamma(p)=\{x\}$ and $\phi(p)=p$.  By \eqref{eq:anchorstatic}, $z(p)=x$ as well.

Fix a current worldview $\alpha$ and an arbitrary successor $\beta$.  At every date on its continuation path, put $\gamma_k=\phi^k(\beta)$ and $y_k=z(\gamma_k)\in X^I$.  The strictly tailored condition gives
\begin{align}
 &(1-\lambda)U(\gamma_k,y_k)+\lambda U(\alpha,y_k)\notag\\
 &\qquad\leq(1-\lambda)u_{i(y_k)}(y_k)+\lambda U(\alpha,y_k)
 =g_{y_k}(\alpha)\leq G(\alpha).\label{eq:firstflowanchorG}
\end{align}
Summing yields
\begin{equation}\label{eq:Vanchorupper}
 C_{\phi,z}(\beta;\alpha)\leq\frac{G(\alpha)}{1-\delta}.
\end{equation}
Choosing $\beta=\tailored{x^\dagger(\alpha)}$ reaches an absorbing state and gives the constant one-period assessment $g_{x^\dagger(\alpha)}(\alpha)=G(\alpha)$.  It attains the bound in \eqref{eq:Vanchorupper}.  This proves \eqref{eq:MPEworldview}, while \eqref{eq:MPEaction} holds by construction.
\end{proof}

\subsection{Characterization of all stationary equilibria}

\begin{proof}[Proof of \cref{thm:onestepcharacterization}]
Sufficiency of \eqref{eq:fullcharacterization} is \cref{thm:anchorexistence}.  For necessity, take an arbitrary stationary MPE and a current state $\alpha$.  By \cref{lem:anchorabsorption}, every $\tailored{x}$ is absorbing in this equilibrium.  Therefore the consumer can obtain $g_x(\alpha)/(1-\delta)$ by selecting $\tailored{x}$, and
\begin{equation}\label{eq:currentGfloor}
 C_{\phi,z}(\phi(\alpha);\alpha)\geq\frac{G(\alpha)}{1-\delta}.
\end{equation}
The bound \eqref{eq:firstflowanchorG} uses only static optimality and \eqref{eq:anchorcross}; it is valid at every future state of this arbitrary equilibrium.  Thus \eqref{eq:Vanchorupper} gives the reverse inequality, proving \eqref{eq:equilibriumvalue}.

Set $\beta=\phi(\alpha)$, $\gamma_k=\phi^k(\beta)$, and $y_k=z(\gamma_k)$.  Subtract the equilibrium value from its bound and expand each term:
\begin{align}
 0
 &=\frac{G(\alpha)}{1-\delta}-C_{\phi,z}(\beta;\alpha)\notag\\
 &=\sum_{k=0}^{\infty}\delta^k
   \Bigl(G(\alpha)-g_{y_k}(\alpha)
   +(1-\lambda)\bigl[u_{i(y_k)}(y_k)-U(\gamma_k,y_k)\bigr]\Bigr).
 \label{eq:tailoredzerosum}
\end{align}
Both differences inside every summand are nonnegative, and all weights $\delta^k$ and $1-\lambda$ are strictly positive.  Their sum can be zero only if both differences vanish at every date.  In particular, at $k=0$, writing $y=y_0=z(\beta)$, we obtain
\begin{equation}\label{eq:tailoredfirstequalities}
 g_y(\alpha)=G(\alpha),\qquad U(\beta,y)=u_{i(y)}(y).
\end{equation}
The second equality is equivalent to
\[
 0=u_{i(y)}(y)-U(\beta,y)
   =\sum_{j\neq i(y)}\beta^j\bigl[u_{i(y)}(y)-u_j(y)\bigr].
\]
All brackets are strictly positive.  Hence $\beta^j=0$ for $j\neq i(y)$ and $\beta^{i(y)}=1$, so $\beta=\tailored{y}$.  The first equality in \eqref{eq:tailoredfirstequalities} gives $y\in\Gamma(\alpha)$.  This proves necessity.  Absorption and the unique action at the chosen tailored worldview give \eqref{eq:finalanchorpath}.
\end{proof}

\begin{corollary}[Indifference sets and the discount factor]\label{cor:policycells}
Under the assumptions of \cref{thm:onestepcharacterization}, the set of stationary MPE is independent of $\delta\in(0,1)$.  The worldview policy is uniquely determined outside a finite union of proper affine hyperplanes in the affine hull of $\Delta^n$.  In particular, it is unique at almost every initial worldview, relative to $(n-1)$-dimensional Lebesgue measure when $n\geq2$.
\end{corollary}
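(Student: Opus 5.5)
By \cref{thm:onestepcharacterization}, for $\lambda^\star<\lambda<1$ a stationary pair $(\phi,z)$ is an MPE exactly when \eqref{eq:fullcharacterization} holds at every $\alpha$. The plan is to read both assertions directly off this characterization.

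\textbf{Independence of $\delta$.} The objects in \eqref{eq:fullcharacterization} are $z^*$, the map $x\mapsto\tailored{x}$, and $\Gamma$. By \eqref{eq:hB}, \eqref{eq:gxanchor} and \eqref{eq:Ganchor}, each depends only on the payoffs and on $\lambda$, never on $\delta$. The threshold $\lambda^\star$ in \eqref{eq:lambdastar} also does not involve $\delta$. So for fixed $\lambda$ the set of pairs satisfying \eqref{eq:fullcharacterization} is the same for every $\delta\in(0,1)$, and by the characterization this set is the set of stationary MPE.

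\textbf{Uniqueness of the worldview policy.} The map $x\mapsto i(x)$ is injective, as noted after \cref{def:strictlytailored}. Hence distinct $x\in\Gamma(\alpha)$ give distinct destinations $\tailored{x}$. The admissible set $\{\tailored{x}:x\in\Gamma(\alpha)\}$ is therefore a singleton if and only if $\Gamma(\alpha)$ is a singleton. Nonuniqueness at $\alpha$ thus requires $g_x(\alpha)=g_y(\alpha)$ for some distinct $x,y\in X^I$.

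For such a pair,
\[
 g_x(\alpha)-g_y(\alpha)
 =(1-\lambda)\bigl[u_{i(x)}(x)-u_{i(y)}(y)\bigr]
  +\lambda\sum_{j}\alpha^j\bigl[u_j(x)-u_j(y)\bigr],
\]
which is affine in $\alpha$. It remains to show that this function is nonconstant on the affine hull $\{\sum_j\alpha^j=1\}$. Then its zero set there is a proper affine hyperplane or is empty.

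\textbf{Main step: nonconstancy.} Since $\lambda>0$, the function is constant on the affine hull exactly when $d_j:=u_j(x)-u_j(y)$ does not depend on $j$. Suppose, for contradiction, that $d_j=c$ for all $j$. Then $u_j(x)=u_j(y)+c$ for every $j$, so the maximizers over $j$ of $u_j(x)$ and of $u_j(y)$ coincide. By \eqref{eq:anchorcross}, $i(x)$ and $i(y)$ are these unique maximizers, so $i(x)=i(y)$. This contradicts injectivity.

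\textbf{Conclusion.} Each pair $\{x,y\}$ therefore contributes at most one proper affine hyperplane. There are finitely many pairs, so the finite union of these hyperplanes contains every state at which the worldview policy is not uniquely determined. Each proper hyperplane in the $(n-1)$-dimensional affine hull has $(n-1)$-dimensional Lebesgue measure zero when $n\geq2$, and so does their finite union. Hence the policy is unique at almost every initial worldview.
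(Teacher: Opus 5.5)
Your proof is correct and has the same outline as the paper's. You read $\delta$-independence off \eqref{eq:fullcharacterization}, and you place all nonuniqueness on the zero sets of the affine differences $g_x-g_y$.

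The one real difference is how you show that $g_x-g_y$ is nonconstant on the affine hull. The paper evaluates it at the two tailored vertices. By \eqref{eq:anchorcomparison} it is strictly positive at $\tailored{x}$ and strictly negative at $\tailored{y}$. This uses $\lambda>\lambda^\star$, and it also shows that each indifference hyperplane actually crosses the simplex, separating the two tailored destinations on the edge joining them.

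You argue instead that constancy would make $u_j(x)-u_j(y)$ independent of $j$. The unique maximizing evaluators given by \eqref{eq:anchorcross} would then coincide, contradicting the injectivity of $x\mapsto i(x)$, which comes from \eqref{eq:anchorstatic}. That step needs only $\lambda>0$ and the tailoring conditions, not the threshold. So the indifference sets $\{g_x=g_y\}$ have measure zero for every $\lambda\in(0,1)$, and the threshold enters your proof only through the characterization.

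Your hedge ``or is empty'' is unnecessary. A nonconstant affine function on the affine hull always vanishes on a hyperplane there, and the paper's sign change shows that this hyperplane meets $\Delta^n$. Neither fact is needed for the measure-zero conclusion.
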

\begin{proof}
The conditions \eqref{eq:fullcharacterization} contain no $\delta$.  For $x\neq y$, the difference $g_x-g_y$ is affine and strictly positive at $\tailored{x}$ by \eqref{eq:anchorcomparison}.  It is therefore not the zero function.  At $\tailored{y}$ it is strictly negative.  Its zero set is a proper affine hyperplane in the affine hull of the simplex.  Outside the finite union of these zero sets, all the $g_x(\alpha)$ are distinct, so $\Gamma(\alpha)$ is a singleton and \eqref{eq:fullcharacterization} determines $\phi(\alpha)$ uniquely.  Proper affine hyperplanes have zero relative Lebesgue measure.  If $|X^I|=1$, the policy is unique everywhere and the union is empty.
\end{proof}

The weak choice regions are the polytopes
\begin{equation}\label{eq:policyregions}
 R_x=\{\alpha\in\Delta^n:g_x(\alpha)\geq g_y(\alpha)
                                \text{ for every }y\in X^I\}.
\end{equation}
Where $x$ is the unique maximizer, every stationary MPE sends the consumer to $\alpha^I(x)$.  On boundaries shared by several regions, any of the corresponding tailored pure worldviews can be chosen.  Thus the strictly tailored condition gives a multi-action counterpart of the one-step policies in \citet[Proposition~1, p.~728]{BBMZ2021}.  Every such policy satisfies $\phi\circ\phi=\phi$: after the first change, the worldview remains fixed.

\begin{remark}[The economic content of the threshold]\label{rem:econthreshold}
At $\alpha^I(x)$, $D_{xy}$ is the loss from taking action $y$ instead of $x$, both evaluated under the current worldview.  The quantity $R_{xy}$ is the gain in the evaluation of $y$ that would result from adopting its tailored worldview.  Therefore
\[
 u_{i(x)}(x)-g_y(\alpha^I(x))
 =D_{xy}-(1-\lambda)R_{xy}>0
\]
means that the prospective gain in experienced utility does not offset the current objection to switching actions.  This makes the tailored pure worldviews absorbing.  Since every implementable action has such a pure worldview, its maximal experienced utility can then be obtained by a credible one-period change in worldview.
\end{remark}

\begin{remark}[Convergence without strictly tailored worldviews]\label{rem:notnecessary}
Strictly tailored worldviews are not necessary for convergence.  In Case~2 of \citet[Proposition~3, pp.~730--731]{BBMZ2021}, worldview~2 happiness-dominates worldview~1: in particular, $u_2(1)>u_1(1)$ while worldview~2 uniquely chooses action~2.  Thus action~1 does not satisfy \eqref{eq:anchorstatic}--\eqref{eq:anchorcross}, yet consumers eventually adopt worldview~2.  Here convergence proceeds through changes in both the action and its evaluation rather than direct selection of a pure worldview tailored to the current implementable action.
\end{remark}

\section{Weak implementation and equilibrium existence}\label{sec:weaktailoring}

In \eqref{eq:weakcountermatrix}, the pure worldview that evaluates both actions most favorably is indifferent between them. A deterministic action choice at that state leaves some selves seeking an unattainable continuation value.

\begin{proof}[Proof of \cref{thm:weaktailoringnonexistence}]
Suppose such an equilibrium $(\phi,z)$ exists.  Write $p=\alpha(1)$.  For both actions, $u_1(x)=1>u_j(x)$ whenever $j\neq1$.  It follows that
\begin{equation}\label{eq:weakmaximum}
 h(\beta)\leq1,\qquad h(\beta)=1\iff\beta=p.
\end{equation}
At the current state $p$, the assessment of any future pair $(\beta,x)$ is
\[
 (1-\lambda)U(\beta,x)+\lambda
 \leq1,
\]
with equality only when $\beta=p$.  Postponement at $p$ gives a continuation value at least $1/(1-\delta)$, while the preceding bound gives the reverse inequality.  Equality in the first future period forces $\phi(p)=p$.

Call the action selected at $p$ action $a$, relabeling the two actions if necessary, and call the other action $b$.  Both strict preference regions remain nonempty: before relabeling, worldview~2 strictly favors $a$ and worldview~3 strictly favors $b$.  Define
\[
 d(\alpha)=U(\alpha,b)-U(\alpha,a),\qquad
 S_b=\{\alpha:d(\alpha)>0\}.
\]
Thus $S_b\neq\varnothing$, $d(p)=0$, and $z(p)=a$.

For a candidate successor $\beta$, let
\begin{align}
 D(\beta)&=\sum_{k=0}^\infty\delta^k
          [1-h(\phi^k(\beta))],\label{eq:weakD}\\
 N(\beta)&=\sum_{k=0}^\infty\delta^k
          \mathbf1_{\{z(\phi^k(\beta))=b\}}.\label{eq:weakN}
\end{align}
All terms are bounded and both series converge.  In particular,
\begin{equation}\label{eq:weakbounds}
 D(p)=N(p)=0,\quad D(\beta)\geq1-h(\beta)>0\ (\beta\neq p),\quad
 0\leq N(\beta)\leq\frac1{1-\delta}.
\end{equation}
For the equilibrium path starting at $\beta$, \eqref{eq:Hnextgeht} and \eqref{eq:Hrecursion} give $H_0\geq h(\beta)$.  Therefore
\begin{equation}\label{eq:weakDsmall}
 0\leq D(\beta)=\frac{1-H_0}{1-\delta}
 \leq\frac{1-h(\beta)}{1-\delta}.
\end{equation}

For each $k\geq0$, the current-worldview evaluation can be written as
\[
 U(\alpha,z(\phi^k(\beta)))
 =U(\alpha,a)+d(\alpha)\mathbf1_{\{z(\phi^k(\beta))=b\}}.
\]
The experienced component satisfies
\[
 \sum_{k=0}^\infty\delta^k h(\phi^k(\beta))
 =\frac1{1-\delta}-D(\beta).
\]
Substituting both identities into \eqref{eq:continuationV} gives
\begin{equation}\label{eq:weakdecomposition}
 C_{\phi,z}(\beta;\alpha)
 =\frac{(1-\lambda)+\lambda U(\alpha,a)}{1-\delta}
       +\lambda d(\alpha)N(\beta)-(1-\lambda)D(\beta).
\end{equation}
If $d(\alpha)\leq0$, the last two terms are nonpositive for every $\beta$, equal zero at $p$, and have a strictly negative sum whenever $\beta\ne p$, because $D(\beta)>0$. Thus $p$ is the unique maximizing successor and
\begin{equation}\label{eq:weakties}
 d(\alpha)\leq0\quad\Longrightarrow\quad\phi(\alpha)=p.
\end{equation}
For a current state $\alpha\in S_b$, write the same decomposition as
\begin{equation}\label{eq:weakobjective}
 \begin{split}
 C_{\phi,z}(\beta;\alpha)
 &=\frac{(1-\lambda)+\lambda U(\alpha,a)}{1-\delta}
   +W_\alpha(\beta),\\
 W_\alpha(\beta)
 &=\lambda d(\alpha)N(\beta)-(1-\lambda)D(\beta).
 \end{split}
\end{equation}
Choose $q\in S_b$ and set $\beta_\varepsilon=(1-\varepsilon)p+\varepsilon q$ for $0<\varepsilon<1$.  Since both utilities at $p$ equal one,
\[
 d(\beta_\varepsilon)=\varepsilon d(q)>0,\qquad
 h(\beta_\varepsilon)=1-\varepsilon[1-h(q)]\longrightarrow1.
\]
Thus $N(\beta_\varepsilon)\geq1$ and, by~\eqref{eq:weakDsmall}, $D(\beta_\varepsilon)\to0$.  Equilibrium optimality and~\eqref{eq:weakobjective} imply
\begin{equation}\label{eq:weakfirstlower}
 W_\alpha(\phi(\alpha))
 \geq \liminf_{\varepsilon\downarrow0}W_\alpha(\beta_\varepsilon)
 \geq\lambda d(\alpha)>0.
\end{equation}

We next prove $\phi(S_b)\subseteq S_b$. Put $\beta=\phi(\alpha)$ with $\alpha\in S_b$. If $d(\beta)\leq0$, \eqref{eq:weakties} gives $\phi(\beta)=p$. If $z(\beta)=a$, no action $b$ is ever chosen from $\beta$, so $N(\beta)=0$ and $W_\alpha(\beta)\leq0$, contrary to \eqref{eq:weakfirstlower}. If $z(\beta)=b$, static optimality requires $d(\beta)=0$ and $\beta\ne p$. Only the initial action is $b$, so $N(\beta)=1$ and $D(\beta)>0$. This gives $W_\alpha(\beta)<\lambda d(\alpha)$, again contradicting \eqref{eq:weakfirstlower}. Thus $d(\beta)>0$ and $\phi(S_b)\subseteq S_b$.

It follows that every path starting in $S_b$ chooses $b$ forever, so
\[
 N(\beta)=\frac1{1-\delta}\qquad(\beta\in S_b).
\]
In particular,
\[
 W_\alpha(\beta_\varepsilon)
 \longrightarrow\frac{\lambda d(\alpha)}{1-\delta}.
\]
But no successor attains this value.  At $p$ the value is zero, and at every $\beta\neq p$,~\eqref{eq:weakbounds} implies
\[
 W_\alpha(\beta)
 \leq\frac{\lambda d(\alpha)}{1-\delta}-(1-\lambda)D(\beta)
 <\frac{\lambda d(\alpha)}{1-\delta}.
\]
This contradicts the requirement that $\phi(\alpha)$ maximize the continuation payoff.  Hence no stationary pure-strategy MPE exists.
\end{proof}

The example separates convergence from existence. Both actions have the same unique maximizing evaluator, so any existing stationary equilibrium would satisfy the convergence conclusion of \cref{thm:genericpurification}. A pure action policy at that evaluator must nevertheless select only one of the tied actions. A self preferring the other action can approach the highest experienced utility without attaining its best continuation value. Weak implementation is therefore insufficient for the existence conclusion of \cref{thm:onestepcharacterization}. The payoff array also satisfies cross-worldview distinctness in \eqref{eq:genericity}: its only tie, $u_1(a)=u_1(b)=1$, is between actions within one worldview. This distinction concerns the sophisticated consumer's stationary game, rather than the na\"ive consumer studied in \citet[Proposition~4, p.~732]{BBMZ2021}.

\section{Stationary randomized strategies}\label{sec:randomization}

A mixed worldview is a vector of utility weights, not a randomized strategy.  The preceding sections use deterministic choices of that vector.  We now allow each self to randomize over both its current action and its next worldview, while retaining stationarity and Markov dependence on the current worldview.

\subsection{The equilibrium condition}

A stationary randomized strategy is a Borel probability kernel $\Pi_\alpha$ on $X\times\Delta^n$.  At state $\alpha_t$, a pair $(x_t,\alpha_{t+1})$ is drawn from $\Pi_{\alpha_t}$.  Write $s_\alpha$ and $\kappa_\alpha$ for its action and successor-state marginals, respectively, and define the transition operator
\[
 (Tf)(\alpha)=\int f(\beta)\,\kappa_\alpha(d\beta).
\]
For each fixed initial state $\alpha_0$, the kernel specifies the law of the sequence by
\[
 \mathbb P\bigl((x_t,\alpha_{t+1})\in B\mid\mathcal F_t\bigr)
 =\Pi_{\alpha_t}(B),\qquad
 \mathcal F_t=\sigma(\alpha_0,x_0,\alpha_1,\ldots,x_{t-1},\alpha_t),
\]
for every Borel set $B\subseteq X\times\Delta^n$. All probabilities below refer to this law. Correlation between the two choices is allowed. The immediate utility depends on the current action, and the remaining continuation depends on the successor worldview, so the expected payoff depends only on the two marginals.

Let $\mathbb E_\beta$ denote expectation under this strategy with initial state $\alpha_0=\beta$.  The evaluation of a deterministic successor $\beta$ by the current worldview $\alpha$ is
\begin{equation}\label{eq:randomC}
 C_\Pi(\beta;\alpha)
 =\mathbb E_\beta\sum_{k=0}^\infty\delta^k
 \bigl[(1-\lambda)U(\alpha_k,x_k)+\lambda U(\alpha,x_k)\bigr].
\end{equation}
The dependence on the two worldview arguments is explicit. Define
\begin{align}
 v(\beta)&=(1-\lambda)\mathbb E_\beta
             \sum_{k=0}^\infty\delta^k U(\alpha_k,x_k),
 &w_j(\beta)&=\lambda\mathbb E_\beta
             \sum_{k=0}^\infty\delta^k u_j(x_k).
 \label{eq:randomcoefficients}
\end{align}
Successive integration against the Borel kernel makes each finite-horizon coefficient Borel in $\beta$. The uniform discounted tail bound implies that $v$ and every $w_j$ are bounded Borel functions. Moreover,
\begin{equation}\label{eq:randomjointborel}
 C_\Pi(\beta;\alpha)=v(\beta)+\sum_j\alpha^j w_j(\beta),
\end{equation}
so $C_\Pi$ is jointly Borel in $(\beta,\alpha)$ and affine in $\alpha$. The same tail bound permits expectations and discounted sums to be interchanged. Define the equilibrium continuation
\begin{equation}\label{eq:randomS}
 S(\alpha)=\int C_\Pi(\beta;\alpha)\,\kappa_\alpha(d\beta).
\end{equation}
A stationary randomized MPE satisfies, at every $\alpha$,
\begin{equation}\label{eq:randomMPE}
 s_\alpha(z^*(\alpha))=1,\qquad
 S(\alpha)\geq C_\Pi(\gamma;\alpha)\quad\text{for every }\gamma\in\Delta^n.
\end{equation}
These conditions are necessary and sufficient for optimality against any joint lottery $\mu$ over the two current choices. Indeed, the deviating self's payoff is
\[
 \int U(\alpha,x)\,\mu_X(dx)
 +\delta\int C_\Pi(\beta;\alpha)\,\mu_\Delta(d\beta)
 \leq h(\alpha)+\delta S(\alpha),
\]
where $\mu_X$ and $\mu_\Delta$ are its marginals; equality holds for $\mu=\Pi_\alpha$. Conversely, a strict improvement in either marginal would give a strict improvement in the joint choice. The successor distribution is concentrated on maximizing continuations. Indeed, \eqref{eq:randomS}--\eqref{eq:randomMPE} give
\[
 0=\int [S(\alpha)-C_\Pi(\beta;\alpha)]\,\kappa_\alpha(d\beta),
 \qquad S(\alpha)-C_\Pi(\beta;\alpha)\geq0.
\]
Thus
\begin{equation}\label{eq:randomargmaxmass}
 \kappa_\alpha\bigl(\{\beta:C_\Pi(\beta;\alpha)=S(\alpha)\}\bigr)=1.
\end{equation}
The maximizing set is Borel by \eqref{eq:randomjointborel}, but need not be closed. Equation~\eqref{eq:randomargmaxmass} specifies its probability, rather than the topological support of the distribution. In particular, a lottery cannot attain a continuation supremum when no deterministic successor attains it.

Static optimality implies $U(\alpha_t,x_t)=h(\alpha_t)$ simultaneously at all integer dates on a probability-one event. Choosing the current worldview again yields
\begin{equation}\label{eq:randompostpone}
 C_\Pi(\alpha;\alpha)=h(\alpha)+\delta S(\alpha),\qquad
 S(\alpha)\geq\frac{h(\alpha)}{1-\delta}.
\end{equation}
The identity follows by conditioning after the first future period; the inequality follows from~\eqref{eq:randomMPE}.

\subsection{Persistence and one-step adoption}

\begin{theorem}[Persistence under stationary randomization]\label{thm:randompersistence}
Under~\eqref{eq:flowceiling}, every stationary randomized MPE starting at $\bar\alpha$ satisfies
\[
 q_{\bar\alpha}(\alpha_t,x_t)=h(\bar\alpha),\qquad
 \alpha_t\in K(\bar\alpha)\quad(t\geq1)
\]
simultaneously with probability one.  If $K(\bar\alpha)\cap\pures=\varnothing$, the distance bound in Theorem~\ref{thm:generaltrap} holds almost surely at every future date.

The conclusions of Theorems~\ref{thm:phasetransition} and~\ref{thm:manyworldviews} extend to stationary randomized MPE, with their pathwise assertions interpreted almost surely.
\end{theorem}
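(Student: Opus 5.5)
The plan is to mirror the deterministic proof of \cref{thm:generaltrap}, replacing pathwise sums by expectations and using \eqref{eq:randomargmaxmass} to pass from the equilibrium value to almost every realized successor. Fix a stationary randomized MPE and start at $\bar\alpha$. For any successor $\beta$, the evaluation $C_\Pi(\beta;\bar\alpha)$ in \eqref{eq:randomC} equals $\mathbb E_\beta\sum_k\delta^k q_{\bar\alpha}(\alpha_k,x_k)$. Static optimality holds almost surely at all dates, so $x_k\in z^*(\alpha_k)$ a.s. Then \eqref{eq:flowceiling} gives $q_{\bar\alpha}(\alpha_k,x_k)\le h(\bar\alpha)$ a.s., and hence $C_\Pi(\beta;\bar\alpha)\le h(\bar\alpha)/(1-\delta)$ for every $\beta$. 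Integrating against $\kappa_{\bar\alpha}$ yields $S(\bar\alpha)\le h(\bar\alpha)/(1-\delta)$. The postponement inequality \eqref{eq:randompostpone} gives the reverse bound, so $S(\bar\alpha)=h(\bar\alpha)/(1-\delta)$.

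Next I will write
\[
0=\frac{h(\bar\alpha)}{1-\delta}-S(\bar\alpha)=\mathbb E_{\bar\alpha}\sum_{t\ge1}\delta^{t-1}\bigl[h(\bar\alpha)-q_{\bar\alpha}(\alpha_t,x_t)\bigr].
\]
This identity comes from conditioning on $\alpha_1$ via the Markov property of the kernel: the law of $(\alpha_t,x_t)_{t\ge1}$ given $\alpha_1=\beta$ is the law under $\mathbb E_\beta$. The interchange of expectation and sum is justified by the uniform tail bound. Each bracket is a.s. nonnegative and every weight is positive, so each bracket vanishes a.s. A countable intersection over $t$ then gives the simultaneous statement with probability one. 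Since $x_t\in z^*(\alpha_t)$ a.s., $\alpha_t\in K(\bar\alpha)$ for all $t\ge1$ a.s. The distance bound follows because $\varepsilon=\dist(K(\bar\alpha),\pures)>0$ was already established in \cref{thm:generaltrap} by compactness, independently of the strategy.

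For the extensions of \cref{thm:phasetransition,thm:manyworldviews}, the high-$\lambda$ persistence parts are instances of the first assertion. The bound \eqref{eq:flowceiling} holds with equality only at compromise pairs, as verified in those proofs; the open set $\mathcal T_\lambda$ is handled by applying the same argument at each $s\in\mathcal T_\lambda$. The low-$\lambda$ and critical parts need randomized absorption of pure worldviews. At $\pure{i}$, every future assessment is at most $h(\pure{i})$, with equality only at $(\pure{i},a_i)$. Postponement then forces $S(\pure{i})=h/(1-\delta)$, and the first-period equality forces $\kappa_{\pure{i}}=\delta_{\pure{i}}$ and $s_{\pure{i}}=\delta_{a_i}$. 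With absorption in hand, choosing $\pure{i}$ from $\bar\alpha$ attains the uniform upper bound $q_\lambda/(1-\delta)$, so the same expectation-equality argument forces $(\alpha_1,x_1)$ into the pure equality pairs a.s. The existence statements need nothing new, because deterministic equilibria are degenerate kernels. The critical-case description \eqref{eq:criticalequalityset} follows from the first assertion with $h(\bar\alpha)=0$.

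I expect the main obstacle to be the measure-theoretic step: identifying $C_\Pi(\beta;\bar\alpha)$ integrated against $\kappa_{\bar\alpha}$ with the expectation of the realized path from $\bar\alpha$. This requires the strong Markov/conditioning property of the process built from a Borel kernel with correlated action–successor draws. It also requires that "a.s. static optimality at all dates" holds under every initial law, not only point masses. I would handle it by proving the finite-horizon identity by induction using $\mathcal F_t$-conditioning as in the law's definition, then passing to the limit with the uniform tail bound.
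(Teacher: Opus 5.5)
Your proposal is correct and follows the paper's proof. You sandwich $S(\bar\alpha)$ between the pointwise ceiling and the randomized postponement inequality \eqref{eq:randompostpone}, force each nonnegative discounted term to vanish almost surely and intersect over countably many dates, and then obtain the extensions from almost-sure absorption at pure states, the same equality argument at $\bar\alpha$ and at each $s\in\mathcal T_\lambda$ (where $h(s)=0$), and the pure-strategy constructions viewed as degenerate kernels (those constructions are admissible kernels because ties are broken by a fixed ordering, which makes the policies Borel).
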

\begin{proof}
The pointwise upper bound gives $S(\bar\alpha)\leq h(\bar\alpha)/(1-\delta)$;~\eqref{eq:randompostpone} gives equality.  Hence
\[
 0=\mathbb E_{\bar\alpha}\sum_{t=1}^\infty\delta^{t-1}
       [h(\bar\alpha)-q_{\bar\alpha}(\alpha_t,x_t)].
\]
Each term is nonnegative almost surely by static optimality and~\eqref{eq:flowceiling}.  Tonelli's theorem and zero expectation force each term to vanish almost surely.  Intersecting these countably many probability-one events proves the simultaneous assertion.  The compact-set distance bound is deterministic and therefore applies on that event.

For the finite symmetric families, the same argument at any pure state, now with the bound $1$, implies that its successor is that same pure state almost surely.  Its action is uniquely determined.  At the balanced state, when $q_\lambda=1-\lambda/\lambda_c>0$, any pure destination attains $q_\lambda/(1-\delta)$.  Equality in the first-period upper bound forces the successor to be pure almost surely; it then remains at the realized pure destination.  When $q_\lambda<0$, only compromise outcomes attain zero, and the first part of the theorem applies.  For an arbitrary initial $s\in\mathcal T_\lambda$, the inequalities $h(s)=0$ and $\max_i g_i(s)<0$ give the same conclusion: every noncompromise pair has strictly negative assessment, so the general assertion forces $c$ at every date almost surely. At $\lambda=\lambda_c$, only compromise pairs and pure extreme pairs attain the bound. The pure-strategy constructions also satisfy the randomized best-response inequalities by integration and provide both critical continuations.  Taking $n=2$, $M=3$ recovers Theorem~\ref{thm:phasetransition}.
\end{proof}

\begin{theorem}[Randomized equilibrium under strictly tailored worldviews]\label{thm:randomtailored}
Under the assumptions of Theorem~\ref{thm:onestepcharacterization}, a Borel stationary randomized strategy is an MPE if and only if
\begin{equation}\label{eq:randomtailoredcondition}
 s_\alpha(z^*(\alpha))=1,\qquad
 \kappa_\alpha\bigl(\{\alpha^I(x):x\in\Gamma(\alpha)\}\bigr)=1
 \quad\text{for every }\alpha.
\end{equation}
For every such equilibrium, $S(\alpha)=G(\alpha)/(1-\delta)$.  From every initial state the consumer reaches a pure worldview in one period and remains there almost surely.  Randomization over future worldviews can occur only among the maximizing pure destinations in~\eqref{eq:randomtailoredcondition}.
\end{theorem}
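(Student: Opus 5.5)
The plan is to mirror the deterministic proofs of \cref{lem:anchorabsorption}, \cref{thm:anchorexistence} and \cref{thm:onestepcharacterization}, replacing pathwise sums by expectations under the kernel. Throughout, the randomized postponement inequality \eqref{eq:randompostpone} and the concentration property \eqref{eq:randomargmaxmass} do the work.

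\emph{Absorption.} First I show that in every stationary randomized MPE each tailored state $p=\tailored{x}$ satisfies $\kappa_p=\delta_p$ and $s_p=\delta_x$. The action part is immediate from \eqref{eq:anchorstatic} and $s_p(z^*(p))=1$. For the successor, the pointwise bounds \eqref{eq:anchorflowother}--\eqref{eq:anchorflowsame} hold for every future pair $(\alpha_k,x_k)$ with $x_k\in z^*(\alpha_k)$. Hence $C_\Pi(\beta;p)\le u_i(x)/(1-\delta)$ for every $\beta$, so $S(p)\le u_i(x)/(1-\delta)$. By \eqref{eq:randompostpone} this is an equality. Integrating $C_\Pi(\beta;p)$ against $\kappa_p$ gives an expectation of nonnegative discounted gaps equal to zero. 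Tonelli then forces the first-period gap to vanish almost surely, and equality in \eqref{eq:anchorflowsame} holds only at $\beta=p$. Thus $\kappa_p(\{p\})=1$.

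\emph{Value.} From any $\alpha$, choosing $\tailored{x}$ deterministically yields $g_x(\alpha)/(1-\delta)$, because that state is absorbing almost surely with action $x$. Therefore $S(\alpha)\ge G(\alpha)/(1-\delta)$. Conversely, \eqref{eq:firstflowanchorG} bounds every realized future assessment by $G(\alpha)$, so $C_\Pi(\gamma;\alpha)\le G(\alpha)/(1-\delta)$ for every $\gamma$, and hence $S(\alpha)=G(\alpha)/(1-\delta)$.

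\emph{Necessity.} Here I expand the identity
\[
0=\int\bigl[G(\alpha)/(1-\delta)-C_\Pi(\beta;\alpha)\bigr]\kappa_\alpha(d\beta)
\]
as in \eqref{eq:tailoredzerosum}. Each summand is a sum of two nonnegative terms, so with probability one the first successor $\beta=\alpha_1$ and its action $y$ satisfy both $g_y(\alpha)=G(\alpha)$ and $U(\beta,y)=u_{i(y)}(y)$. The latter forces $\beta=\tailored{y}$. Hence $\kappa_\alpha$ is concentrated on $\{\tailored{x}:x\in\Gamma(\alpha)\}$. This is a finite set, so measurability is not an issue.

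\emph{Sufficiency.} For a Borel kernel satisfying \eqref{eq:randomtailoredcondition}, every tailored state maps to itself almost surely. Indeed $\Gamma(\tailored{x})=\{x\}$ by \eqref{eq:anchorcomparison}, so such states are absorbing and the action there is $x$. Choosing any maximizing destination therefore gives $G(\alpha)/(1-\delta)$, so $S(\alpha)=G(\alpha)/(1-\delta)$. The upper bound above then yields \eqref{eq:randomMPE}. One-step adoption, remaining at the realized pure destination, and randomization only among maximizing destinations all follow from the concentration of $\kappa_\alpha$ combined with absorption.

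The main obstacle is passing from the zero-expectation identity to an almost-sure first-period statement. This needs the interchange of the integral and the discounted sum, which is justified by the uniform tail bound and Tonelli, and it needs care that the joint law of $(\alpha_1,x_1)$ under $\mathbb E$ is the relevant object, not the two marginals separately. Everything else carries over from the deterministic proofs.
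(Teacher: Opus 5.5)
Your proposal is correct and follows the paper's proof essentially step for step. The steps are: absorption at each $\tailored{x}$ from \eqref{eq:randompostpone} and an almost-surely vanishing first-period gap; the value $S(\alpha)=G(\alpha)/(1-\delta)$ from the pointwise bound \eqref{eq:firstflowanchorG} together with the absorbing maximizing destinations; necessity from the expanded zero-expectation identity; and sufficiency from $\Gamma(\tailored{x})=\{x\}$, with your remarks on Tonelli and on the joint law of $(\alpha_1,x_1)$ supplying justification the paper leaves implicit.
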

\begin{proof}
At $p=\alpha^I(x)$, inequalities~\eqref{eq:anchorflowother}--\eqref{eq:anchorflowsame} bound every future assessment by $h(p)$, with equality only when the future pair is $(p,x)$.  Equations~\eqref{eq:randompostpone} and~\eqref{eq:randomMPE} force equality of the expected discounted sum.  The nonnegative first-period gap is zero almost surely, so $\kappa_p(\{p\})=1$.  Static uniqueness gives $s_p(\{x\})=1$.

For a general $\alpha$, the upper bound~\eqref{eq:firstflowanchorG} applies to every realized future pair.  Thus $S(\alpha)\leq G(\alpha)/(1-\delta)$.  Each maximizing pure destination is absorbing by the preceding paragraph and attains this bound, so equality holds.  Expanding the difference gives
\[
 0=\mathbb E_\alpha\sum_{t=1}^\infty\delta^{t-1}
 \left[G(\alpha)-g_{x_t}(\alpha)
 +(1-\lambda)\{u_{i(x_t)}(x_t)-U(\alpha_t,x_t)\}\right].
\]
Every expression inside braces and every score difference is nonnegative.  At $t=1$ both vanish almost surely.  Strict cross-worldview comparisons imply $\alpha_1=\alpha^I(x_1)$ and $x_1\in\Gamma(\alpha)$, proving necessity.

Conversely,~\eqref{eq:randomtailoredcondition} forces absorption at each tailored pure state because $\Gamma(\alpha^I(x))=\{x\}$.  Every selected successor then attains the same value $G(\alpha)/(1-\delta)$.  No deterministic or randomized deviation exceeds the pointwise upper bound, proving sufficiency.
\end{proof}

\subsection{Almost-sure convergence of experienced utility}

For a stationary randomized MPE define state functions
\begin{align}
 H(\alpha)&=(1-\delta)\mathbb E_\alpha\sum_{k=0}^\infty\delta^k h(\alpha_k),\label{eq:randomH}\\
 Q(\alpha)&=(1-\delta)\mathbb E_\alpha\sum_{k=1}^\infty\delta^{k-1}U(\alpha,x_k),\label{eq:randomQ}\\
 A(\alpha)&=(1-\delta)S(\alpha)=(1-\lambda)(TH)(\alpha)+\lambda Q(\alpha).
 \label{eq:randomA}
\end{align}
All are bounded Borel functions.  Here $H(\alpha_t)$ is a conditional expectation of discounted future experienced utility, rather than a discounted sum evaluated on a single realized future path.

\begin{theorem}[Utility convergence under stationary randomization]\label{thm:randomalignment}
For every fixed initial state, $H(\alpha_t)$ is a bounded submartingale with respect to the history observed upon arrival at date $t$.  There is a bounded random variable $L$ such that
\begin{equation}\label{eq:randomlimits}
 H(\alpha_t),\ h(\alpha_t),\ A(\alpha_t),\ Q(\alpha_t)\longrightarrow L
 \quad\text{almost surely and in }L^1.
\end{equation}
For each fixed integer $k\geq1$,
\begin{equation}\label{eq:randomlossbound}
 \mathbb E_{\alpha_0}\sum_{t=0}^\infty
 [h(\alpha_t)-U(\alpha_t,x_{t+k})]
 \leq\frac{(1-\lambda)(\mathbb E_{\alpha_0}L-H(\alpha_0))}
 {\lambda(1-\delta)^2\delta^{k-1}}.
\end{equation}
In particular, each of these nonnegative loss sequences is summable almost surely.
\end{theorem}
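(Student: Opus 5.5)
The plan is to transplant the deterministic proof of \cref{thm:alignment} to state functions, replacing pathwise identities by conditional expectations under the Markov kernel.

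\emph{Step 1: the postponement inequality.} By \eqref{eq:randompostpone}, $A(\alpha)\geq h(\alpha)$ at every state. Static optimality holds almost surely, so $U(\alpha,x_k)\leq h(\alpha)$ for every future action drawn from $\alpha$. Averaging gives $Q(\alpha)\leq h(\alpha)$. Substituting into \eqref{eq:randomA},
\[
 h\leq(1-\lambda)TH+\lambda Q\leq(1-\lambda)TH+\lambda h,
\]
and therefore $(TH)(\alpha)\geq h(\alpha)$.

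\emph{Step 2: the recursion.} Conditioning on the first transition and using the Markov property yields $H(\alpha)=(1-\delta)h(\alpha)+\delta(TH)(\alpha)$. Hence
\[
 (TH)(\alpha)-H(\alpha)=(1-\delta)\bigl[(TH)(\alpha)-h(\alpha)\bigr]\geq0.
\]
Since $\mathbb E[H(\alpha_{t+1})\mid\mathcal F_t]=(TH)(\alpha_t)$, the sequence $H(\alpha_t)$ is a bounded submartingale. The martingale convergence theorem then gives $H(\alpha_t)\to L$ almost surely, and bounded convergence upgrades this to $L^1$.

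The main obstacle is Step 3: converting the submartingale statement into convergence of $h$, $A$ and $Q$. I would use the compensator. Its increments are $D_t=(TH)(\alpha_t)-H(\alpha_t)\geq0$, and
\[
 \mathbb E\sum_t D_t=\lim_T\mathbb E H(\alpha_T)-H(\alpha_0)=\mathbb E L-H(\alpha_0)<\infty.
\]
So $\sum_t D_t<\infty$ almost surely, and in particular $D_t\to0$. Two consequences follow.
\begin{itemize}
\item From the Step 2 identity, $(TH)(\alpha_t)-h(\alpha_t)=D_t/(1-\delta)\to0$.
\item The martingale part $H(\alpha_{t+1})-(TH)(\alpha_t)$ tends to zero, because $H(\alpha_t)$ converges and $D_t\to0$. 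Hence $(TH)(\alpha_t)\to L$.
\end{itemize}
Together these give $h(\alpha_t)\to L$. The sandwich $h\leq A\leq(1-\lambda)TH+\lambda h$ then gives $A(\alpha_t)\to L$. The bound
\[
 \lambda(h-Q)\leq(1-\lambda)(TH-h)
\]
gives $Q(\alpha_t)\to L$. All limits hold almost surely and, by boundedness, in $L^1$.

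\emph{Step 4: the loss bound \eqref{eq:randomlossbound}.} Expand $h(\alpha)-Q(\alpha)$ as a discounted expectation of nonnegative losses, as in \eqref{eq:gapsum}. Keeping only the $k$-th term and conditioning at date $t$ gives
\[
 (1-\delta)\delta^{k-1}\,\mathbb E\bigl[h(\alpha_t)-U(\alpha_t,x_{t+k})\mid\mathcal F_t\bigr]\leq h(\alpha_t)-Q(\alpha_t).
\]
The right side is at most
\[
 \frac{1-\lambda}{\lambda}\cdot\frac{D_t}{1-\delta}.
\]
Taking expectations, summing over $t$ with Tonelli's theorem, and inserting $\mathbb E\sum_t D_t=\mathbb E L-H(\alpha_0)$ gives the stated constant. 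A nonnegative series with finite expectation is finite almost surely, which proves the final assertion. One point needs care in Step 4: $\mathbb E_{\alpha_t}$ must represent the conditional law of the future given $\mathcal F_t$, and the correlation between $x_t$ and $\alpha_{t+1}$ is harmless because only future actions enter the losses.
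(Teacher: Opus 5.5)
Your proposal is correct and follows the paper's proof essentially step for step: postponement gives $TH\geq h$, the recursion $H=(1-\delta)h+\delta TH$ gives a nonnegative drift, the submartingale converges, telescoping expectations make the drift summable, and the $k$-th loss is bounded by a constant multiple of that drift. The only cosmetic difference is that you obtain $(TH)(\alpha_t)\to L$ through the vanishing martingale increment, whereas the paper reads it off directly from $TH=H+d$ with $d(\alpha_t)\to0$.
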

\begin{proof}
Postponement and static optimality give $A\geq h$ and $Q\leq h$, respectively.  From~\eqref{eq:randomA},
\[
 h\leq(1-\lambda)TH+\lambda Q\leq(1-\lambda)TH+\lambda h,
 \qquad\text{hence }TH\geq h.
\]
Conditioning the series for $H$ after its first period yields
\begin{equation}\label{eq:randomdrift}
 H=(1-\delta)h+\delta TH,\qquad
 d:=TH-H=(1-\delta)(TH-h)\geq0.
\end{equation}
If $\mathcal F_t$ is the history through $\alpha_t$ but before the date-$t$ draw, then
\[
 \mathbb E[H(\alpha_{t+1})\mid\mathcal F_t]
 =(TH)(\alpha_t)=H(\alpha_t)+d(\alpha_t).
\]
Since $\underline u\leq H\leq\overline u$, the submartingale convergence theorem \citep[Theorem~4.2.11]{Durrett2019} gives an almost-sure limit $L$.  The deterministic bounds pass to this limit, and bounded convergence gives convergence in $L^1$.  Moreover, telescoping expectations gives
\begin{equation}\label{eq:randomdriftsum}
 \mathbb E\sum_{t=0}^{T}d(\alpha_t)
 =\mathbb E H(\alpha_{T+1})-H(\alpha_0)
 \leq\overline u-H(\alpha_0).
\end{equation}
Monotone convergence implies
\[
 \mathbb E\sum_{t=0}^\infty d(\alpha_t)
 =\mathbb E L-H(\alpha_0)<\infty.
\]
Thus $\sum_t d(\alpha_t)<\infty$ almost surely, and $d(\alpha_t)\to0$.  Rearranging~\eqref{eq:randomdrift},
\[
 h=H-\frac{\delta}{1-\delta}d,\qquad TH=H+d,
\]
so both $h(\alpha_t)$ and $(TH)(\alpha_t)$ tend to $L$.  The inequalities
\[
 h\leq A\leq(1-\lambda)TH+\lambda h,
 \qquad 0\leq\lambda(h-Q)\leq(1-\lambda)(TH-h)
\]
then prove the remaining almost-sure limits.  Boundedness gives all the $L^1$ limits.

Set $b_k(\alpha)=h(\alpha)-\mathbb E_\alpha U(\alpha,x_k)\geq0$.  By bounded convergence for the discounted series and nonnegativity,
\[
 (1-\delta)\delta^{k-1}b_k(\alpha)
 \leq h(\alpha)-Q(\alpha)
 \leq\frac{1-\lambda}{\lambda(1-\delta)}d(\alpha).
\]
Consequently, with $c_k=(1-\lambda)/[\lambda(1-\delta)^2\delta^{k-1}]$,
\[
 b_k(\alpha)\leq c_k d(\alpha),\qquad
 \mathbb E[h(\alpha_t)-U(\alpha_t,x_{t+k})\mid\mathcal F_t]
 =b_k(\alpha_t).
\]
Apply Tonelli's theorem and the preceding conditional expectation identity to obtain
\[
 \mathbb E\sum_{t=0}^\infty[h(\alpha_t)-U(\alpha_t,x_{t+k})]
 =\sum_{t=0}^\infty\mathbb E b_k(\alpha_t)
 \leq c_k\,\mathbb E\sum_{t=0}^\infty d(\alpha_t),
\]
which is~\eqref{eq:randomlossbound}.  A nonnegative random variable with finite expectation is finite almost surely.  Since the set of fixed integer horizons is countable, the conclusions can be imposed simultaneously for every $k\geq1$.
\end{proof}

Under randomization, conditional discounted utility satisfies a submartingale inequality in place of pathwise monotonicity. The argument uses stationarity to reproduce the continuation after a one-period postponement.

\subsection{Limiting support and generic purification}\label{sec:randomcontact}
For a stationary randomized MPE, $S$ is the equilibrium continuation in \eqref{eq:randomS}. By \eqref{eq:randomargmaxmass}, for every $\alpha$ there is a deterministic successor attaining $S(\alpha)=\sup_\gamma C_\Pi(\gamma;\alpha)$. Set
\[
 \mathcal Z_\Pi=\{\alpha:(1-\delta)S(\alpha)=h(\alpha)\}.
\]

\begin{theorem}[Support of randomized-equilibrium limits]\label{thm:randomcontact}
For every stationary randomized MPE and every fixed initial state, every accumulation point of the realized worldview path belongs to $\mathcal Z_\Pi$ almost surely.  At each $p\in\mathcal Z_\Pi$, \eqref{eq:contactmaxima} holds for every $x\in z^*(p)$.  Under \eqref{eq:uniquevaluator}, there is a random index $J_\infty\in J$ such that
\begin{equation}\label{eq:randompurelimit}
 \alpha_t\longrightarrow\alpha(J_\infty)\qquad\text{almost surely}.
\end{equation}
For fixed finite $J,X$, the payoff arrays admitting a stationary randomized MPE that fails to converge to a pure worldview with positive probability from some fixed initial state are contained in the finite union of hyperplanes in \eqref{eq:hyperplanenecessity}. In particular, that set has empty interior.  Under the additional assumptions of \cref{thm:genericfinite}, an absorbing pure worldview is reached in finite time almost surely.
\end{theorem}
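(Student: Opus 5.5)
I would follow the pure-strategy proof of \cref{thm:genericpurification} step by step. Each deterministic ingredient gets replaced by its randomized analogue from \cref{thm:randomalignment}.

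\emph{Regularity of $S$ and the contact set.} By \eqref{eq:randomjointborel}, $C_\Pi(\gamma;\alpha)$ is affine in $\alpha$, with slope vector $w(\gamma)$ bounded by $\lambda B/(1-\delta)$ uniformly in $\gamma$. By \eqref{eq:randomargmaxmass} we also have $S(\alpha)=\sup_\gamma C_\Pi(\gamma;\alpha)$, attained by some deterministic $\beta^\ast(\alpha)$. So $S$ is convex and Lipschitz with the constant of \cref{lem:convexvalue}. The affine map $\alpha\mapsto C_\Pi(\beta^\ast(\beta);\alpha)$ supports $S$ at $\beta$. Conditioning after the first period gives the analogue of \eqref{eq:Bellmansupportidentity}: for a deterministic successor $\beta$,
\[
C_\Pi(\beta;\alpha)=(1-\lambda)h(\beta)+\lambda\textstyle\int U(\alpha,x)\,s_\beta(dx)+\delta\int C_\Pi(\gamma;\alpha)\,\kappa_\beta(d\gamma),
\]
and the last integral is an average of affine functions lying below $S$ and touching it at $\beta$. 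Hence the integral itself is an affine support $f_\beta$ of $S$ at $\beta$.

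\emph{Contact restriction.} The proof of \cref{lem:contactmax} then goes through verbatim. The only change is that $U(p,z(\beta_t))$ becomes $\int U(p,x)\,s_{\beta_t}(dx)$. For small $t$, $s_{\beta_t}$ is concentrated on $z^\ast(\beta_t)\subseteq z^\ast(p)$, so this integral still equals $h_0$. \Cref{lem:rayerror} only needs some supporting slope at each $\beta_t$, which we now have. This gives \eqref{eq:contactmaxima} at every $p\in\mathcal Z_\Pi$.

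\emph{Accumulation points.} $(1-\delta)S-h$ is continuous, and by \cref{thm:randomalignment}, $A(\alpha_t)-h(\alpha_t)\to0$ almost surely. So on a probability-one event every accumulation point lies in $\mathcal Z_\Pi$.

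\emph{Convergence to a vertex under \eqref{eq:uniquevaluator}.} The argument above gives $\mathcal Z_\Pi\subseteq\pures$, and hence $\dist(\alpha_t,\pures)\to0$ almost surely. \textbf{I expect the main obstacle to be vanishing increments}, because the $k=1$ alignment now holds only in summable-expectation form. I would proceed as follows.
\begin{itemize}
\item From \eqref{eq:randomlossbound} with $k=1$, the realized losses $h(\alpha_t)-U(\alpha_t,x_{t+1})$ are almost surely summable, hence tend to zero.
\item On the full-measure event where this holds and $x_t\in z^\ast(\alpha_t)$ for all $t$, rerun the deterministic subsequence argument. Take $\alpha_{t_r}\to p$ and $\alpha_{t_r+1}\to q$ with $p\neq q$ and $x_{t_r+1}=x$.
\item Then $U(p,x)=L=U(q,x)=M(x)$, with $p,q$ distinct vertices, which contradicts uniqueness of the maximizer.
\end{itemize}
The vertex-neighborhood argument then yields a random limit $J_\infty$.

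\emph{Hyperplane claim.} It follows as in \cref{cor:genericrobustness}: failure of \eqref{eq:uniquevaluator} forces a tie $u_i(x)=u_j(x)$.

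\emph{Finite hitting time.} Under the assumptions of \cref{thm:genericfinite}, the absorption and local-capture arguments there were stated pointwise in \eqref{eq:localcapturegap}. The steps are:
\begin{itemize}
\item Integrating \eqref{eq:localcapturegap} against $\kappa_\alpha$ shows that each $p=\alpha(j)$ with $j\in J^\dagger$ is absorbing almost surely.
\item On the ball of radius $r_j$ around $p$, the successor equals $p$ almost surely.
\item Almost-sure convergence to some $\alpha(J_\infty)$, with $J_\infty\in J^\dagger$ by the contact property, puts the path in that ball at some finite date. The next period is then $p$ almost surely.
\end{itemize}
Countably many dates keep the exceptional set null.
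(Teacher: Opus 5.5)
Your proposal is correct and follows the paper's proof essentially step for step. The shared steps are the kernel-averaged support $f_\beta$ with the first-period conditioning identity, the ray argument with $s_{\beta_t}$ concentrated on $z^*(p)$, almost-sure summability of the $k=1$ losses to get vanishing increments, and the pointwise capture bounds integrated over realized paths for the finite hitting time. Two points are left implicit: absorption at each $\alpha(j)$ with $j\in J^\dagger$ takes its lower bound from the postponement inequality \eqref{eq:randompostpone}, since local capture already presupposes that absorption, and $J_\infty$ is measurable because $\{J_\infty=j\}=\{\lim_t\|\alpha_t-\alpha(j)\|_1=0\}$ on the convergence event.
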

\begin{proof}
For fixed $\gamma$, the function $\alpha\mapsto C_\Pi(\gamma;\alpha)$ is affine, with slope bounded in $\ell^\infty$ by $\lambda B/(1-\delta)$.  The equilibrium inequalities in \eqref{eq:randomMPE} and integration against $\kappa_\alpha$ show that
\[
 S(\alpha)=\sup_\gamma C_\Pi(\gamma;\alpha).
\]
Indeed, the inequality $S\geq C_\Pi(\gamma;\alpha)$ gives one direction and the fact that $S$ is an average of these values gives the other.  Thus \eqref{eq:Slipschitz} and convexity hold.  For each $\beta$ define
\[
 f_\beta(\alpha)=\int C_\Pi(\gamma;\alpha)\,\kappa_\beta(d\gamma).
\]
For all $\alpha$, the equilibrium inequalities give $f_\beta(\alpha)\leq S(\alpha)$, and \eqref{eq:randomS} gives equality at $\alpha=\beta$. More explicitly,
\[
 f_\beta(\alpha)=S(\beta)+b_\beta\cdot(\alpha-\beta),\qquad
 b_\beta^j=\lambda\int\mathbb E_\gamma
       \sum_{k=0}^{\infty}\delta^k u_j(x_k)\,\kappa_\beta(d\gamma).
\]
The uniform series bound gives $\|b_\beta\|_\infty\leq\lambda B/(1-\delta)$. Thus these are supporting affine functions of exactly the kind used in \cref{lem:rayerror}.  Write $\overline U_\beta(\alpha)=\sum_x s_\beta(x)U(\alpha,x)$.  Conditioning after the initial draw at $\beta$ gives
\begin{equation}\label{eq:randomsupportidentity}
 C_\Pi(\beta;\alpha)
 =(1-\lambda)h(\beta)+\lambda\overline U_\beta(\alpha)
       +\delta f_\beta(\alpha).
\end{equation}
The identity uses only the marginals of the joint action--successor draw: the immediate term depends on the action and the remaining terms on the successor.  Correlation therefore causes no extra term.

Fix $p\in\mathcal Z_\Pi$ and a feasible ray $\beta_t=p+t\{\alpha(j)-p\}$.  For sufficiently small $t>0$, every action in $z^*(\beta_t)$ belongs to $A=z^*(p)$ and maximizes the directional slope over $A$, by the gap calculation in \cref{lem:contactmax}.  Since $s_{\beta_t}$ is supported on $z^*(\beta_t)$,
\[
 h(\beta_t)=h(p)+t\max_{x\in A}\{u_j(x)-h(p)\},
 \qquad \overline U_{\beta_t}(p)=h(p).
\]
Put $r=\max_{x\in A}\{u_j(x)-h(p)\}$ and $R_t=S(p)-f_{\beta_t}(p)$. By \cref{lem:rayerror}, $R_t\geq0$ and $R_t/t\to0$. The deviation inequality and \eqref{eq:randomsupportidentity} give
\[
 S(p)\geq C_\Pi(\beta_t;p)
       =h(p)+(1-\lambda)tr+\delta\{S(p)-R_t\}.
\]
Since $(1-\delta)S(p)=h(p)$, it follows that $(1-\lambda)r\leq\delta R_t/t$ and hence $r\leq0$. The index $j$ is arbitrary, so $u_j(x)\leq h(p)$ for every $j$ and every $x\in A$. For each such $x$,
\[
 h(p)=U(p,x)\leq M(x)\leq h(p),\qquad
 0=\sum_i p^i\{M(x)-u_i(x)\}.
\]
Nonnegativity of the summands proves \eqref{eq:contactmaxima} for every action in $A$, including actions not used by the equilibrium lottery at $p$.

By \cref{thm:randomalignment}, $A(\alpha_t)-h(\alpha_t)\to0$ almost surely, where $A=(1-\delta)S$.  The set $\mathcal Z_\Pi$ is closed in the compact simplex, because $(1-\delta)S-h$ is continuous. On this probability-one event every path has an accumulation point, and continuity puts every such point in $\mathcal Z_\Pi$. Thus $\mathcal Z_\Pi$ is also nonempty.  Under \eqref{eq:uniquevaluator} this set contains only pure worldviews.  The same probability-one event may be intersected with those on which $h(\alpha_t)\to L$, static optimality holds at every integer date, and
\[
 h(\alpha_t)-U(\alpha_t,x_{t+1})\to0.
\]
The last assertion follows from the almost-sure summability in \cref{thm:randomalignment}.  Fix a realized path in this event. Since all its accumulation points are pure, $\dist_1(\alpha_t,\mathcal P)\to0$. If the increments do not tend to zero, choose $\epsilon>0$ and infinitely many dates for which $\|\alpha_{t+1}-\alpha_t\|_1\geq\epsilon$. Compactness and finiteness of $X$ give a subsequence of these dates with
\[
 \alpha_{t_r}\to p,\qquad \alpha_{t_r+1}\to q,\qquad
 p\ne q,\qquad x_{t_r+1}=x.
\]
The limits satisfy $\|p-q\|_1\geq\epsilon$ and are pure contact states. Static optimality at $\alpha_{t_r+1}$ and continuity give $x\in z^*(q)$ and $U(q,x)=h(q)=L$. Vanishing one-period losses give $U(p,x)=L$. The contact-state result then implies $U(q,x)=M(x)$, so the two distinct pure states $p,q$ both maximize the valuation of $x$, contradicting \eqref{eq:uniquevaluator}. Hence $\|\alpha_{t+1}-\alpha_t\|_1\to0$. Eventually the path lies within $1/4$ of the pure-state set and has increments smaller than $1$. Since distinct vertices have distance $2$, it cannot switch between their $1/4$-neighborhoods thereafter. It therefore converges to one vertex, proving \eqref{eq:randompurelimit}.  The index is measurable, since
\[
 \{J_\infty=j\}=\{\lim_t\|\alpha_t-\alpha(j)\|_1=0\}
\]
on the convergence event, with any fixed index assigned on its null complement.

If a randomized equilibrium fails to purify with positive probability from some fixed initial state, \eqref{eq:uniquevaluator} cannot hold. A maximizing tie then places its payoff array in \eqref{eq:hyperplanenecessity}. This proves the empty-interior assertion.

For finite-time absorption, assume the hypotheses of \cref{thm:genericfinite}. At each $p=\alpha(j)$ with $j\in J^\dagger$, \eqref{eq:genericstrictcap} and the strict maximizing-evaluator condition bound every future assessment by $h_j$, with equality only for the pair $(p,x_j)$. By \eqref{eq:randompostpone},
\[
 0=\mathbb E_p\sum_{t=1}^{\infty}\delta^{t-1}
       \{h_j-q_p(\alpha_t,x_t)\}.
\]
The first nonnegative term is zero almost surely, so $\kappa_p(\{p\})=1$. Static uniqueness determines the action there. For $\|\alpha-p\|_1<r_j$, the score comparisons in \eqref{eq:localradius} give $g_x(\alpha)>g_y(\alpha)$ for all $y\ne x$, where $x=x_j$. The absorbing successor $p$ attains $g_x(\alpha)/(1-\delta)$, and equality in its first-period upper bound again forces $\kappa_\alpha(\{p\})=1$.

For each integer $t$, conditional on $\alpha_t$ belonging to this neighborhood, the probability that $\alpha_{t+1}\ne p$ is zero. There are finitely many such $p$ and countably many dates, so these implications hold simultaneously almost surely. The limiting pure contact state has index in $J^\dagger$ by unique static optimality and \eqref{eq:contactmaxima}. Almost-sure convergence ensures entry into its neighborhood at a finite date and absorption at the following date.
\end{proof}

\subsection{Small payoff differences and delayed changes in action}\label{sec:nearcommon}
\Cref{thm:randomcontact} rules out persistent mixing when each action has a unique maximizing worldview.  It does not give a bound, uniform over payoff perturbations, on how soon a consumer approaches purity.  The following estimate connects the common-value examples to models in which the relevant payoffs are only approximately equal.

\begin{proposition}[An approximate assessment bound]\label{prop:approximateceiling}
Fix a current worldview $\bar\alpha$ and a set of actions $D\subseteq X$.  Suppose $\epsilon\geq0$ and $g>0$ satisfy
\begin{equation}\label{eq:approximateceiling}
 q_{\bar\alpha}(\beta,x)
 \leq h(\bar\alpha)+\epsilon-g\mathbf1_{\{x\in D\}}
 \qquad\text{for all }\beta,\ x\in z^*(\beta).
\end{equation}
In every stationary randomized MPE that exists, starting from $\bar\alpha$,
\begin{equation}\label{eq:discountedbadbound}
 \mathbb E_{\bar\alpha}\sum_{t=1}^\infty
          \delta^{t-1}\mathbf1_{\{x_t\in D\}}
 \leq\frac{\epsilon}{(1-\delta)g}.
\end{equation}
If $\tau=\inf\{t\geq1:x_t\in D\}$, with $\inf\varnothing=\infty$, then for every integer $T\geq1$,
\begin{equation}\label{eq:finitehorizonprob}
 \mathbb P_{\bar\alpha}(\tau\leq T)
 \leq\min\left\{1,\frac{\epsilon}{(1-\delta)g\delta^{T-1}}\right\}.
\end{equation}
For a stationary pure-strategy MPE, the deterministic counterparts of \eqref{eq:discountedbadbound}--\eqref{eq:finitehorizonprob} also hold. If $\tau<\infty$ and $\epsilon>0$, then
\begin{equation}\label{eq:deterministictimedelay}
 \tau\geq1+\frac{\log((1-\delta)g/\epsilon)}{|\log\delta|}.
\end{equation}
\end{proposition}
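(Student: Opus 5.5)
The argument repeats the sandwich behind \cref{thm:generaltrap} and \cref{thm:randompersistence}, with the exact upper bound replaced by the approximate one \eqref{eq:approximateceiling}. In a stationary randomized MPE starting at $\bar\alpha$, expand the equilibrium continuation as
\[
 S(\bar\alpha)=\mathbb E_{\bar\alpha}\sum_{t=1}^\infty\delta^{t-1}q_{\bar\alpha}(\alpha_t,x_t).
\]
This is the integral of $C_\Pi(\cdot;\bar\alpha)$ against $\kappa_{\bar\alpha}$, and it can be written as a single expectation by conditioning and the uniform tail bound. Static optimality holds almost surely at every date, so $x_t\in z^*(\alpha_t)$. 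Hence \eqref{eq:approximateceiling} applies to each realized pair:
\[
 q_{\bar\alpha}(\alpha_t,x_t)\leq h(\bar\alpha)+\epsilon-g\mathbf 1_{\{x_t\in D\}}
\]
almost surely. Summing with weights $\delta^{t-1}$ and taking expectations (Tonelli, since the terms are bounded) gives the upper bound
\[
 S(\bar\alpha)\leq\frac{h(\bar\alpha)+\epsilon}{1-\delta}-g\,\mathbb E_{\bar\alpha}\sum_{t\geq1}\delta^{t-1}\mathbf 1_{\{x_t\in D\}}.
\]

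For the lower bound, use the postponement inequality \eqref{eq:randompostpone}, which gives $S(\bar\alpha)\geq h(\bar\alpha)/(1-\delta)$. Combining the two bounds, the $h(\bar\alpha)$ terms cancel. What remains is exactly
\[
 g\,\mathbb E_{\bar\alpha}\sum_{t\geq1}\delta^{t-1}\mathbf 1_{\{x_t\in D\}}\leq\frac{\epsilon}{1-\delta},
\]
which is \eqref{eq:discountedbadbound}.

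For \eqref{eq:finitehorizonprob}, note that on the event $\{\tau\leq T\}$ the discounted sum contains the term $\delta^{\tau-1}\geq\delta^{T-1}$. Therefore
\[
 \delta^{T-1}\mathbf 1_{\{\tau\leq T\}}\leq\sum_{t\geq1}\delta^{t-1}\mathbf 1_{\{x_t\in D\}},
\]
and Markov's inequality, i.e.\ taking expectations, gives the bound. The minimum with $1$ is trivial.

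The pure-strategy case is the degenerate kernel. The same chain of inequalities holds pathwise, using \cref{lem:postponement} in place of \eqref{eq:randompostpone}, with expectations replaced by the deterministic sums. Then, if $\tau<\infty$, the deterministic version reads
\[
 \delta^{\tau-1}\leq\frac{\epsilon}{(1-\delta)g}.
\]
Taking logarithms and dividing by $\log\delta<0$ gives
\[
 \tau-1\geq\frac{\log\bigl((1-\delta)g/\epsilon\bigr)}{|\log\delta|},
\]
which is \eqref{eq:deterministictimedelay}. When $\epsilon>0$, the right-hand side is finite.

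The only delicate step is justifying the representation of $S(\bar\alpha)$ as one expected discounted sum under the randomized kernel, with the bound applied realization by realization. This rests on the measurability and tail-bound remarks accompanying \eqref{eq:randomcoefficients}--\eqref{eq:randomS} and on static optimality holding simultaneously at all dates almost surely. Everything else is arithmetic.
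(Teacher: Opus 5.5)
Your proposal is correct and takes essentially the same route as the paper. You apply the approximate bound to every realized statically optimal pair, bound $S(\bar\alpha)$ from below with \eqref{eq:randompostpone}, and cancel the $h(\bar\alpha)$ terms; the pure-strategy case uses \cref{lem:postponement} directly, which avoids any measurability requirement. The pointwise inequality $\delta^{T-1}\mathbf 1_{\{\tau\leq T\}}\leq\sum_{t\geq1}\delta^{t-1}\mathbf 1_{\{x_t\in D\}}$, together with taking expectations and then logarithms, gives the remaining bounds exactly as in the paper.
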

\begin{proof}
The postponement inequality \eqref{eq:randompostpone}, followed by \eqref{eq:approximateceiling} and termwise integration, gives
\[
 \frac{h(\bar\alpha)}{1-\delta}
 \leq S(\bar\alpha)
 \leq\frac{h(\bar\alpha)+\epsilon}{1-\delta}
       -g\mathbb E_{\bar\alpha}\sum_{t=1}^\infty
                   \delta^{t-1}\mathbf1_{\{x_t\in D\}}.
\]
Subtracting the two constant terms proves \eqref{eq:discountedbadbound}.  On the event $\tau\leq T$, the sum includes the term $\delta^{\tau-1}\geq\delta^{T-1}$.  Hence
\[
 \delta^{T-1}\mathbf1_{\{\tau\leq T\}}
 \leq\sum_{t=1}^\infty\delta^{t-1}\mathbf1_{\{x_t\in D\}}.
\]
Taking expectations proves \eqref{eq:finitehorizonprob}.

For a stationary pure-strategy MPE, apply \cref{lem:postponement} directly to its deterministic path. With $S(\bar\alpha)=C_{\phi,z}(\phi(\bar\alpha);\bar\alpha)$, the same assessment bound gives
\[
 \frac{h(\bar\alpha)}{1-\delta}
 \leq S(\bar\alpha)
 \leq\frac{h(\bar\alpha)+\epsilon}{1-\delta}
       -g\sum_{t=1}^{\infty}\delta^{t-1}\mathbf1_{\{x_t\in D\}}.
\]
Consequently,
\[
 \sum_{t=1}^{\infty}\delta^{t-1}\mathbf1_{\{x_t\in D\}}
 \leq\frac{\epsilon}{(1-\delta)g},\qquad
 \mathbf1_{\{\tau\leq T\}}
 \leq\min\left\{1,\frac{\epsilon}{(1-\delta)g\delta^{T-1}}\right\}.
\]
This proves the deterministic assertions without a measurability requirement on the policy. If $\tau<\infty$ and $\epsilon>0$, the first inequality implies
\[
 \delta^{\tau-1}\leq\frac{\epsilon}{(1-\delta)g}.
\]
Taking logarithms and dividing by $\log\delta<0$ gives \eqref{eq:deterministictimedelay}. If $\epsilon=0$, the discounted sum is zero. Each action in $D$ is therefore excluded at every positive date, almost surely in the randomized case.
\end{proof}

\begin{corollary}[Perturbing the common-value family]\label{cor:nearcommon}
Fix the $n$-worldview payoffs $u^0$ in \eqref{eq:nworldpayoffs}, $\lambda>\lambda_c$, and $0<\delta<1$.  Set
\[
 b=\frac{\lambda}{\lambda_c}-1>0,\qquad
 d_0=\frac{(n-1)M-1}{n}>0.
\]
Let $u^\rho$ be any perturbed payoff array satisfying
\begin{equation}\label{eq:unrestrictedperturbation}
 \max_{j,x}|u_j^\rho(x)-u_j^0(x)|\leq\rho,
 \qquad 0<\rho<\min\{d_0/2,1/2\}.
\end{equation}
Write $U^\rho$, $h^\rho$, and $q^\rho$ for the utility, static value, and assessment induced by $u^\rho$. The common-value restriction need not hold for $u^\rho$. Fix a stationary pure-strategy MPE or a stationary randomized MPE of the perturbed game, and start at the equal mixture. Define $\tau=\inf\{t\geq1:x_t\ne c\}$, with $\inf\varnothing=\infty$. In the pure-strategy case the probabilities below are those of the deterministic path. For every integer $T\geq1$,
\begin{equation}\label{eq:perturbedhorizonbound}
 \mathbb P(\tau\leq T)
 \leq\min\left\{1,\frac{2\rho}{(1-\delta)b\delta^{T-1}}\right\}.
\end{equation}
Along every pure-strategy equilibrium path, and simultaneously at all dates almost surely under randomization, $x_t=c$ implies
\begin{equation}\label{eq:perturbeddistance}
 \max_i\alpha_t^i\leq\frac{M+2\rho}{1+M},\qquad
 \dist_1(\alpha_t,\mathcal P)\geq\frac{2(1-2\rho)}{1+M}>0.
\end{equation}
If the perturbed payoffs satisfy \eqref{eq:uniquevaluator}, then each pure-strategy equilibrium path converges to a pure worldview and has $\tau<\infty$; the same assertions hold almost surely under stationary randomization. For a fixed integer $T\geq1$ and $0<\eta<1$, put
\begin{equation}\label{eq:finitehorizontolerance}
 \rho_{T,\eta}
 =\min\left\{\frac{d_0}{2},\frac12,
       \frac{\eta(1-\delta)b\delta^{T-1}}{2}\right\}>0.
\end{equation}
Whenever $0<\rho<\rho_{T,\eta}$, every equilibrium specified above satisfies
\[
 \mathbb P(x_1=\cdots=x_T=c)
 =\mathbb P(\tau>T)>1-\eta.
\]
The tolerance is uniform over the perturbed payoff arrays and the equilibria satisfying these hypotheses.
\end{corollary}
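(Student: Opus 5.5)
The whole argument is an application of \cref{prop:approximateceiling}, taking $\bar\alpha$ to be the equal mixture and $D=X\setminus\{c\}$. To use it, I need to choose $\epsilon$ and $g$ so that \eqref{eq:approximateceiling} holds for the perturbed assessment $q^\rho$. The goal is $\epsilon=2\rho$ and $g=b$.

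First I bound $h^\rho(\bar\alpha)$ from below. Because $u^0_j(c)=0$, we have $U^\rho(\bar\alpha,c)\geq-\rho$, and hence $h^\rho(\bar\alpha)\geq-\rho$.

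Next I bound the assessment of a compromise pair. For any $\beta$, $|U^\rho(\beta,c)|\leq\rho$ and $|U^\rho(\bar\alpha,c)|\leq\rho$. So $q^\rho_{\bar\alpha}(\beta,c)\leq\rho\leq h^\rho(\bar\alpha)+2\rho$.

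Then I bound the assessment of an extreme action $a_i$. Here $U^\rho(\beta,a_i)\leq 1+\rho$, and by \eqref{eq:nworldutility}, $U^\rho(\bar\alpha,a_i)\leq -d_0+\rho$. This gives
\[
 q^\rho_{\bar\alpha}(\beta,a_i)\leq (1-\lambda)+\lambda U^0(\bar\alpha,a_i)+\rho = q_\lambda+\rho=-b+\rho .
\]
Combining with $h^\rho(\bar\alpha)\geq-\rho$, this is at most $h^\rho(\bar\alpha)+2\rho-b$. Both cases therefore satisfy \eqref{eq:approximateceiling} with $\epsilon=2\rho$ and $g=b$. No implementability restriction on $\beta$ is needed.

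With that bound, \eqref{eq:finitehorizonprob} gives \eqref{eq:perturbedhorizonbound} directly, in both the randomized and deterministic cases. The tolerance claim follows by plugging in $\rho<\rho_{T,\eta}$: then $2\rho/((1-\delta)b\delta^{T-1})<\eta$, and $\{\tau>T\}$ is exactly the event $x_1=\dots=x_T=c$. This bound uses only constants from $u^0$, so it is uniform over perturbations and equilibria.

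For the distance bound, suppose $x_t=c$ is statically optimal at $\alpha_t$ (almost surely at all dates simultaneously under randomization). Then $U^\rho(\alpha_t,a_i)\leq U^\rho(\alpha_t,c)\leq\rho$. Meanwhile $U^\rho(\alpha_t,a_i)\geq(1+M)\alpha_t^i-M-\rho$. Together these give $\alpha^i_t\leq (M+2\rho)/(1+M)$. Then $\dist_1(\alpha_t,\mathcal P)=2(1-\max_i\alpha_t^i)\geq 2(1-2\rho)/(1+M)$, which is positive because $\rho<1/2$.

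Finally, the last assertion uses \eqref{eq:uniquevaluator}. Under it, \cref{thm:genericpurification}(iii) and \cref{thm:randomcontact} give convergence to a pure worldview, almost surely in the randomized case. Suppose $\tau=\infty$ on a path. Then $x_t=c$ for all $t\geq1$, and the distance bound keeps the path a fixed positive distance from $\mathcal P$, contradicting convergence. So $\tau<\infty$ on every pure-strategy path and almost surely under randomization.

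The step needing the most care is the sign bookkeeping in the extreme-action bound. It has to use $U^0(\bar\alpha,a_i)=-d_0$ together with the identity $(1-\lambda)-\lambda d_0=1-\lambda/\lambda_c$, and it has to absorb the perturbation of $h(\bar\alpha)$ into $\epsilon$ rather than into $g$.
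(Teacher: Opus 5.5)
Your proposal is correct and follows the paper's proof essentially step for step. You make the same application of \cref{prop:approximateceiling}, with $\bar\alpha$ the equal mixture, $D=X\setminus\{c\}$, $\epsilon=2\rho$ and $g=b$, and you use the same coordinate computation for \eqref{eq:perturbeddistance}. The only departure is how you show $\tau<\infty$ under \eqref{eq:uniquevaluator}: you derive it by contradiction from the distance bound you had already proved, whereas the paper notes that $a_i$ remains uniquely optimal near each vertex (with margins $1-2\rho$ and $1+M-2\rho$), so a convergent path eventually chooses a noncompromise action. Your route is shorter, while the paper's gives the slightly stronger conclusion that $a_i$ is chosen at every sufficiently late date.
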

\begin{proof}
At the equal mixture, the unperturbed values are $U^0(\bar\alpha,c)=0$ and $U^0(\bar\alpha,a_i)=-d_0$.  Each mixed-worldview utility changes by at most $\rho$, so $c$ remains uniquely optimal when $2\rho<d_0$ and
\[
 -\rho\leq h^\rho(\bar\alpha)\leq\rho.
\]
For every future pair, whether or not it was implementable before perturbation, the unperturbed assessment satisfies
\[
 q^0_{\bar\alpha}(\beta,c)=0,\qquad
 q^0_{\bar\alpha}(\beta,a_i)\leq-b.
\]
The two weighted components of the assessment change by at most $\rho$ in total.  Therefore
\[
 q^\rho_{\bar\alpha}(\beta,x)
 \leq\rho-b\mathbf1_{\{x\ne c\}}
 \leq h^\rho(\bar\alpha)+2\rho-b\mathbf1_{\{x\ne c\}}.
\]
Apply \cref{prop:approximateceiling} with $\epsilon=2\rho$, $g=b$, and $D=X\setminus\{c\}$ to obtain \eqref{eq:perturbedhorizonbound}.

If $c$ is optimal at $\alpha$, then for every $i$,
\[
 (1+M)\alpha^i-M-\rho
 \leq U^\rho(\alpha,a_i)
 \leq U^\rho(\alpha,c)\leq\rho.
\]
This gives the coordinate bound in \eqref{eq:perturbeddistance}.  Since
$\dist_1(\alpha,\mathcal P)=2(1-\max_i\alpha^i)$, the distance bound follows.

Under \eqref{eq:uniquevaluator}, \cref{thm:genericpurification,thm:randomcontact} give convergence to a pure worldview.  At each pure worldview its own action $a_i$ is still uniquely optimal: its payoff exceeds that of $c$ by at least $1-2\rho>0$ and every other extreme action by at least $1+M-2\rho>0$.  Continuity gives a neighborhood with the same unique optimal action.  A convergent path eventually enters such a neighborhood, so a noncompromise action is eventually chosen.  This proves $\tau<\infty$ almost surely.  For $0<\rho<\rho_{T,\eta}$, \eqref{eq:perturbedhorizonbound} gives
\[
 \mathbb P(\tau\leq T)
 \leq\frac{2\rho}{(1-\delta)b\delta^{T-1}}<\eta.
\]
Taking complements proves the final assertion.
\end{proof}

Thus breaking exact equality can change the limiting worldview while leaving finite-horizon action choices largely unchanged. The estimate bounds the probability of an early departure in equilibria that exist in the perturbed game; it does not supply a uniform upper bound on the departure time.

\section{Economic implications}\label{sec:implications}

\subsection{The action menu and the persistence of mixed worldviews}

In \eqref{eq:countermatrix}, both pure worldviews are absorbing in every stationary MPE throughout $0<\lambda<1$.  Yet when $\lambda>1/2$, every equilibrium from the balanced worldview selects the compromise action forever.  Pure-state absorption and convergence from mixed worldviews therefore describe different properties of the same model.

The initial self compares
\[
 (1-\lambda)\cdot1+\lambda\cdot(-1)=1-2\lambda
 \quad\text{with}\quad0.
\]
The first quantity evaluates an extreme action after full conversion; the second evaluates the common-value action.  At high $\lambda$, current objections to the extreme action outweigh the prospective enjoyment associated with conversion.  Persistence is possible despite unrestricted choice of the next worldview and the absence of adjustment costs.  Its source is the interaction between the action menu and the weights placed on current and future evaluations.

For the $n$-worldview family with $\lambda>\lambda_c$, the same comparison is $1-\lambda/\lambda_c$ against zero. The common action remains available as the number of perspectives increases. Fix a Borel stationary equilibrium and an initial population distribution $\nu_0$, with the same payoff array and parameters for all consumers. Let $\nu_t$ be the distribution of worldviews at date $t$ induced by that equilibrium. For deterministic policies,
\[
 \nu_t(B)=\nu_0\bigl((\phi^t)^{-1}(B)\bigr)
\]
for every Borel $B\subseteq\Delta^n$; for a randomized policy,
\[
 \nu_t(B)=\int\mathbb P_\alpha(\alpha_t\in B)\,\nu_0(d\alpha).
\]
Theorem~\ref{thm:manyworldviews} implies
\begin{equation}\label{eq:populationpersistentmass}
 \nu_t(C_c)\geq\nu_0(\mathcal T_\lambda)\qquad(t\geq0).
\end{equation}
For each initial $\alpha\in\mathcal T_\lambda$, Theorem~\ref{thm:randompersistence} gives $\mathbb P_\alpha(\alpha_t\in C_c)=1$. Thus
\[
 \nu_t(C_c)
 \geq\int_{\mathcal T_\lambda}
          \mathbb P_\alpha(\alpha_t\in C_c)\,\nu_0(d\alpha)
 =\nu_0(\mathcal T_\lambda).
\]
Deterministic equilibrium is the corresponding degenerate case. A positive initial mass on $\mathcal T_\lambda$ therefore leaves a positive mass of persistently mixed worldviews at every date.

The limiting-support theorem explains which feature of the examples matters beyond their symmetry. At a mixed accumulation point, the pure worldviews that receive positive weight must agree at the highest evaluation of every action optimal there. Under unique maximizing evaluators, every existing stationary equilibrium instead converges to purity, for every $0<\lambda,\delta<1$. Low mindset flexibility alone therefore does not distinguish persistence from eventual purity. Shared maxima are necessary for persistent mixing, but they are not sufficient: continuation incentives must also support the mixed outcome. The linear-program condition provides one such sufficient mechanism.

For unrestricted finite payoff arrays, the equality requirement confines nonpurification to a finite union of hyperplanes. A common-value outside option specifies a payoff that is independent of the available worldviews. Relative robustness concerns perturbations within this specification; unrestricted perturbations can change the long-run outcome. The two perturbation classes therefore answer different economic questions.

\subsection{Eventual purity and the timing of adjustment}
Fix $n$, $M$, $\lambda>\lambda_c$, and $\delta$, and start at the equal mixture. \Cref{cor:nearcommon} allows unrestricted perturbations of size $\rho$ around the common-value family. If the perturbed payoffs have unique maximizing evaluators, every stationary equilibrium that exists eventually approaches a pure worldview. Nevertheless, \eqref{eq:perturbedhorizonbound} bounds the probability of leaving the compromise action by any fixed date, uniformly over all those equilibria, and that bound tends to zero with $\rho$. While compromise is chosen, \eqref{eq:perturbeddistance} keeps the worldview a positive distance from purity.

The distinction concerns both behavior and timing. During an interval of compromise, the worldview weights may continue to change. The estimate concerns the first departure from action $c$, not the first date at which a pure worldview is reached. While $c$ is chosen, the distance bound in \eqref{eq:perturbeddistance} excludes purity. Conditional on equilibrium existence in nearby games, eventual convergence can therefore coexist with an arbitrarily small probability of leaving compromise during any fixed observation period. The asymptotic result alone gives no uniform upper bound on how soon a change in behavior will occur.

\subsection{Tailored worldviews and credible changes in preferences}

Under the strictly tailored condition, each implementable action has a pure worldview that uniquely selects it and maximizes its experienced utility.  When $\lambda>\lambda^\star$, that worldview is absorbing.  Selecting it is therefore a credible way to obtain a constant continuation, without requiring a commitment by future selves.  The period-by-period upper bound in \cref{thm:onestepcharacterization} is attained from the first future period, so delaying adoption cannot improve the current self's evaluation.  This explains both one-step convergence and the independence of the equilibrium set from $\delta$ on this domain.

The contrast with the happiness-dominance case of \citet[Proposition~3, pp.~730--731]{BBMZ2021} is useful.  In that case, the consumer may initially seek a happier evaluation without changing her action, while anticipating that later selves will eventually change it.  Adjustment can consequently be gradual.  In the strictly tailored class, every implementable action already has an absorbing pure destination, and the current self compares these destinations directly through $g_x(\alpha)$.

\subsection{Unique evaluation and unique implementation}
Two distinct uniqueness conditions enter the analysis. A unique maximizing evaluator of an action concerns comparisons of that same action across worldviews. It is enough for asymptotic purification in every existing stationary equilibrium, even when that evaluator would choose another action. Unique implementation concerns comparisons of different actions at a fixed pure worldview. Together with unique valuation maxima, it makes eventual adoption finite at sufficiently low mindset flexibility. Strict tailoring for every implementable action gives the still stronger one-step characterization and guarantees existence.

Weak tailoring retains the first kind of uniqueness but not the second. Accordingly, \cref{cor:weaktailoringconvergence} establishes its conditional convergence result, while \cref{thm:weaktailoringnonexistence} shows that a stationary pure-strategy equilibrium may fail to exist. In the example the difficulty is not an unwillingness to choose a pure worldview: a self can seek successors arbitrarily close to it while inducing a different tied action, but no successor attains the desired limit value. This separates the credibility of the destination's action from its ability to provide a favorable evaluation.

\subsection{Individual convergence and population outcomes}

Individual adoption of a pure worldview does not by itself imply population polarization.  For the strictly tailored model, let $\nu$ be a Borel probability measure on $\Delta^n$ describing initial worldviews among consumers who share the payoff array and $\lambda>\lambda^\star$.  Fix a stationary MPE $(\phi,z)$ with Borel measurable policies and define
\[
 B_x=\{\alpha\in\Delta^n:\phi(\alpha)=\alpha^I(x)\},
 \qquad p_x=\nu(B_x),\qquad x\in X^I.
\]
By \cref{thm:onestepcharacterization}, the sets $B_x$ form a measurable partition of $\Delta^n$.  After the first period, the worldview distribution is
\begin{equation}\label{eq:populationdistribution}
 \nu_1=\sum_{x\in X^I}p_x\,\mathsf{Dirac}_{\alpha^I(x)},
 \qquad\sum_{x\in X^I}p_x=1,
\end{equation}
where $\mathsf{Dirac}_p$ denotes the probability measure concentrated at $p$.  For every Borel set $A\subseteq\Delta^n$,
\[
 \nu_1(A)=\nu(\phi^{-1}(A))
 =\sum_{x\in X^I}\nu(B_x)\,\mathbf{1}_{\{\alpha^I(x)\in A\}},
\]
which proves \eqref{eq:populationdistribution}.  Absorption gives $\nu_t=\nu_1$ for all $t\geq1$.  The population reaches a common pure worldview if one $p_x$ equals one; at least two distinct pure worldviews persist if at least two weights are positive.  The one-step result alone does not determine which of these population outcomes occurs.  For $n\geq2$, if $\nu$ is absolutely continuous with respect to $(n-1)$-dimensional Lebesgue measure on the affine hull of $\Delta^n$, \cref{cor:policycells} implies that these weights are unaffected by tie-breaking. The case $n=1$ has a single worldview and a unique worldview policy. This aggregation keeps the individual decision problem fixed and introduces no social interaction.

The comparisons throughout use the model's cardinal evaluations.  The monotonicity of $H_t$ concerns the discounted average of the utilities experienced along a single equilibrium path.  It is not a welfare comparison between policies that induce different worldviews, nor a social ranking of compromise and conversion.  Welfare with endogenous preferences is a separate question raised in \citet[Section~V, p.~751]{BBMZ2021}.

\section{Conclusion}

The ability to choose preferences need not lead to a pure worldview. At low mindset flexibility, a common-value action can sustain mixed worldviews in every stationary equilibrium from a relatively open set of initial states, even when every pure worldview is absorbing. The general restriction on limiting behavior is equally important: every pure worldview in the support of a mixed accumulation point must maximize the valuation of every action optimal there. If each implementable action has a unique maximizing evaluator, every stationary equilibrium that exists converges to a pure worldview for all $0<\lambda,\delta<1$. Experienced utility converges without uniqueness. These results extend to stationary randomization.

The limiting outcome and the timing of adjustment are distinct. Payoff perturbations that remove shared maxima can restore eventual purity while leaving the probability of departing from compromise during a fixed horizon arbitrarily small. Strict tailoring addresses the attainability of a favorable continuation: at sufficiently low mindset flexibility, it ensures existence and a complete one-step equilibrium characterization. Under weak implementation, a stationary pure-strategy equilibrium may instead fail to exist because a continuation optimum is not attained.

These are conclusions about individual preference formation. Population outcomes also depend on the initial distribution of worldviews, and welfare comparisons require evaluating policies that change preferences. The results do not characterize equilibrium existence for arbitrary finite menus or limiting behavior in every tied-payoff environment.

\appendix
\section{Boundary values of mindset flexibility}\label{sec:boundary}

Fix $0<\delta<1$ and retain the stationary pure-strategy equilibrium concept.  The postponement identity in \cref{lem:postponement} remains valid at $\lambda=0$ and $\lambda=1$, since its proof uses neither $\lambda>0$ nor $\lambda<1$.

\begin{proposition}[Perfect mindset flexibility]\label{prop:lambda0}
Suppose $\lambda=0$, and put $M=\max_{\alpha\in\Delta^n}h(\alpha)=\max_{j,x}u_j(x)$.  In every stationary MPE, from every initial worldview,
\[
 h(\alpha_t)=M\quad\text{for all }t\geq1.
\]
If $h$ has a unique maximizer $p$, then $p$ is pure and every stationary MPE selects $p$ after one period.  A stationary MPE exists at $\lambda=0$ without the uniqueness assumption.
\end{proposition}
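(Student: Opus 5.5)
At $\lambda=0$ the continuation value \eqref{eq:continuationV} reduces to
\[
 C_{\phi,z}(\beta;\alpha)=\sum_{k=0}^\infty\delta^k h(\phi^k(\beta)),
\]
which does not depend on the current worldview $\alpha$. Hence the equilibrium value $S(\alpha)=C_{\phi,z}(\phi(\alpha);\alpha)$ equals the common supremum
\[
 S^*=\sup_\beta\sum_{k}\delta^k h(\phi^k(\beta)).
\]
This supremum is attained at every state, namely at $\phi(\alpha)$. In particular $S$ is a constant $S^*$.

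The first step is to show $S^*=M/(1-\delta)$. Every term is at most $M$, so $S^*\leq M/(1-\delta)$. For the reverse bound, take a maximizer $p$ of $h$. Since $h$ is the maximum of finitely many affine functions on the simplex, we may take $p$ to be a vertex with $h(p)=M$, and this also gives the identity $M=\max_{j,x}u_j(x)$. By \cref{lem:postponement}, which is valid at $\lambda=0$ as noted, $S(p)\geq h(p)/(1-\delta)=M/(1-\delta)$. Because $S$ is constant, $S^*=M/(1-\delta)$.

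Next, along any path we have $\alpha_1=\phi(\alpha_0)$ and $S^*=\sum_k\delta^k h(\alpha_{k+1})$. This equals $M/(1-\delta)$, and every term satisfies $h\leq M$, so each term equals $M$. Thus $h(\alpha_t)=M$ for all $t\geq1$.

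For uniqueness, the maximizer set of the convex function $h$ is a face-like set containing a vertex. If the maximizer $p$ is unique, it must be that vertex, so $p$ is pure, and $\alpha_t=p$ for all $t\geq1$.

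For existence, set $\phi(\alpha)=p$ for every $\alpha$, where $p$ is a fixed pure maximizer, and choose $z(\alpha)\in z^*(\alpha)$. Then every successor $\beta$ gives
\[
 C(\beta;\alpha)=h(\beta)+\delta M/(1-\delta)\leq M/(1-\delta),
\]
and $\beta=p$ attains this bound. So the pair is a stationary MPE.

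The only subtle point is noticing that $S$ is constant at $\lambda=0$. Once that is seen, the postponement inequality at a single maximizer pins down $S^*$ everywhere, and the rest follows.
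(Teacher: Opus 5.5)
Your proof is correct and follows essentially the same route as the paper. Both use the universal bound $h\leq M$, the postponement inequality at a pure maximizer $p$, termwise equality in a discounted sum, and the same constant-policy construction for existence; the only difference is that you observe $C_{\phi,z}(\beta;\alpha)=\sum_k\delta^k h(\phi^k(\beta))$ is independent of $\alpha$ at $\lambda=0$ (so $S$ is constant), whereas the paper first shows $h(\phi^k(p))=M$ for all $k$ and then uses $p$ itself as a deviation successor from each $\alpha$.
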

\begin{proof}
For every $\alpha\in\Delta^n$ and $x\in X$,
\[
 U(\alpha,x)=\sum_j\alpha^j u_j(x)\leq\max_{j,y}u_j(y)=M.
\]
A maximizing pair $(j,x)$ gives a pure worldview $p=\pure{j}$ with $h(p)=M$.  In an arbitrary stationary MPE, set $p_0=p$ and $p_{k+1}=\phi(p_k)$.  The universal upper bound and \cref{lem:postponement} give
\[
 \frac{M}{1-\delta}\geq C_{\phi,z}(\phi(p);p)
 \geq\frac{h(p)}{1-\delta}=\frac{M}{1-\delta}.
\]
Since $\lambda=0$, equality means
\[
 0=\sum_{k=1}^{\infty}\delta^{k-1}[M-h(p_k)],
\]
and all brackets are nonnegative.  Hence $h(p_k)=M$ for every $k\geq1$, as well as for $k=0$.  From any current worldview $\alpha$, choosing $p$ as successor consequently gives
\[
 C_{\phi,z}(p;\alpha)
 =\sum_{k=0}^{\infty}\delta^k h(p_k)=\frac{M}{1-\delta}.
\]
For the equilibrium continuation from $\alpha$, optimality and the same upper bound imply
\[
 \frac{M}{1-\delta}\leq C_{\phi,z}(\phi(\alpha);\alpha)
 \leq\frac{M}{1-\delta}.
\]
Expanding the difference once more gives
\[
 0=\sum_{k=0}^{\infty}\delta^k[M-h(\phi^{k+1}(\alpha))],
\]
so $h(\phi^{k+1}(\alpha))=M$ at every future date.  If $h$ has a unique maximizer, it must be the pure maximizer $p$ already exhibited, and all these states equal $p$.

For existence without uniqueness, choose a pure maximizer $p$, set $\phi(\alpha)=p$ for every $\alpha$, and take any $z(\alpha)\in z^*(\alpha)$.  The current-action condition holds by construction.  For every candidate successor $\beta$, the worldview condition follows from
\[
 C_{\phi,z}(\beta;\alpha)
 =h(\beta)+\frac{\delta M}{1-\delta}
 \leq\frac{M}{1-\delta}=C_{\phi,z}(p;\alpha).\qedhere
\]
\end{proof}

\begin{proposition}[Perfect mindset inflexibility]\label{prop:lambda1}
Suppose $\lambda=1$.  For any selection $z(\alpha)\in z^*(\alpha)$, the policy $\phi(\alpha)=\alpha$ is a stationary MPE.  In particular, every initial mixed worldview can remain unchanged forever.
\end{proposition}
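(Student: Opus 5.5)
We verify the two conditions of \cref{def:MPE} directly for the pair $(\phi,z)$ with $\phi=\id$ and an arbitrary static selection $z(\alpha)\in z^*(\alpha)$. Condition \eqref{eq:MPEaction} holds by construction. The work is in \eqref{eq:MPEworldview}. The key observation is that at $\lambda=1$ the experienced-utility component of \eqref{eq:continuationV} vanishes. The continuation value therefore depends on the future path only through the current worldview's evaluation of future actions:
\[
 C_{\phi,z}(\beta;\alpha)=\sum_{k=0}^{\infty}\delta^k U\bigl(\alpha,z(\phi^k(\beta))\bigr).
\]

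Under $\phi=\id$ we have $\phi^k(\beta)=\beta$ for every $k$, so every future action equals $z(\beta)$. The continuation from any candidate successor $\beta$ is therefore
\[
 C_{\phi,z}(\beta;\alpha)=\frac{U(\alpha,z(\beta))}{1-\delta}.
\]
Static optimality at $\alpha$ gives $U(\alpha,z(\beta))\leq h(\alpha)=U(\alpha,z(\alpha))$. Hence
\[
 C_{\phi,z}(\beta;\alpha)\leq \frac{h(\alpha)}{1-\delta}=C_{\phi,z}(\alpha;\alpha)=C_{\phi,z}(\phi(\alpha);\alpha),
\]
which is exactly \eqref{eq:MPEworldview}. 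Absolute convergence of the series is covered by the uniform tail bound stated after \eqref{eq:currentpayoff}, and no continuity or measurability of $z$ is needed.

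The final assertion follows immediately. Under $\phi=\id$ the path \eqref{eq:path} satisfies $\alpha_t=\alpha_0$ for all $t$, so any mixed initial worldview remains unchanged forever. There is no real obstacle; the only point requiring care is to note that the argument uses the exact vanishing of the $(1-\lambda)$ term. This is why the comparison against the one-period postponement lower bound of \cref{lem:postponement} is attained with equality here, rather than merely bounded as for interior $\lambda$.
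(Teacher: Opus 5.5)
Your proof is correct and is the paper's argument: under $\phi=\mathrm{id}$ at $\lambda=1$, the continuation from any successor $\beta$ reduces to $U(\alpha,z(\beta))/(1-\delta)$, and static optimality bounds this by $h(\alpha)/(1-\delta)=C_{\phi,z}(\alpha;\alpha)$. Your closing aside is slightly loose, since the postponement bound of \cref{lem:postponement} can also hold with equality for interior $\lambda$ (at contact states). It plays no role in the proof.
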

\begin{proof}
Under the identity policy, choosing next worldview $\beta$ induces the action $z(\beta)$ forever.  Its evaluation under the current worldview $\alpha$ is
\[
 C_{\phi,z}(\beta;\alpha)
 =\sum_{k=0}^{\infty}\delta^k U(\alpha,z(\beta))
 =\frac{U(\alpha,z(\beta))}{1-\delta}
 \leq\frac{h(\alpha)}{1-\delta}
 =C_{\phi,z}(\alpha;\alpha).
\]
Hence remaining at $\alpha$ is optimal at every state.
\end{proof}

\end{document}